\documentclass[12pt]{amsart}
\usepackage[margin=2.5cm]{geometry}
\usepackage[svgnames]{xcolor}
\usepackage{enumitem}
\usepackage{amsmath,amssymb}
\usepackage{caption}

\usepackage{array}   % for \newcolumntype macro
\newcolumntype{C}{>{$}c<{$}} % math-mode version of "c" column type
\usepackage{arydshln} %also for this same damn table

\usepackage{tikz}
\usepackage[latin1]{inputenc}
\usepackage[english]{babel}
\usepackage{multimedia}
\usepackage{multicol}
\usepackage[T1]{fontenc} 
\usepackage{times}
\usepackage{ytableau}
\usepackage{graphicx}
\usepackage{fourier}
\DeclareMathOperator{\CT}{ct}
\newcommand{\jack}[2]{f_{#1,#2}}

\newcommand{\wti}[3]{\textrm{wt}_{#1,#2}(#3)}
\newcommand{\wt}[2]{\textrm{wt}(#1,#2)}
\newcommand{\ct}[2]{\textrm{ct}({#1,#2})}
\newcommand{\ctBox}[1]{\textrm{ct}({#1})}

\newcommand{\omitt}[1]{}

\newcommand{\mc}[1]{\mathcal{#1}}

\newcommand{\CC}{\mathbf{C}}
\newcommand{\QQ}{\mathbf{Q}}
\newcommand{\RR}{\mathbf{R}}
\newcommand{\ZZ}{\mathbf{Z}}
\newcommand{\OO}{\mathcal{O}}
\newcommand{\ttt}{\mathfrak{t}}

\newcommand{\Z}{\mathbf Z}
\newcommand{\Pnk}{{\mathcal P}(n,k)}
\newcommand{\R}{\mathcal R}
\newcommand{\I}{\mathcal I}

\newcommand{\bgspart}[2]{b(#1,#2)}
\newcommand{\bgsset}{P_k(\bgspart{n}{k})}
\newcommand{\unders}[1]{{\mathcal U}(#1)}
\newcommand{\M}[2]{{\mathcal M}(#1,#2)}
\newcommand{\mult}[3]{{\text{mult}(#1,#2,#3)}}
\newcommand{\multB}[2]{{\text{mult}(#1,#2)}}
\newcommand{\rev}[1]{#1^{\textrm{REV}}}
\newcommand{\phiInv}[2]{\phi^{-1}(#1,#2)}
\newcommand{\la}{\langle}
\newcommand{\ra}{\rangle}
\def\Map{\phi}
\def\iop{\tau}
\newcommand{\Mapa}[1]{\Map(#1)}
\newcommand{\mapconst}[1]{b_{#1}}
\newcommand{\sigconst}[3]{K(#1,#2,#3)}

\setlength{\parindent}{0em}
\setlength{\parskip}{1em}

\def\zeroComp{\mathbf{0}}
\newcommand{\jacko}[1]{\jack{\zeroComp}{#1}}
\def\reg{\rho}
\def\col{c_0}
\def\cola{c_1}

\def\standComp{M}
\def\gMinusl{\gamma\setminus\lambda}
\def\lMinusg{\lambda\setminus\gamma}
\def\Sym{\mathfrak S}
\def\TIm{T'}
\def\TDo{T}
\def\SIm{S'}
\def\SDo{S}

\usepackage{chngcntr}
\counterwithin{figure}{section}
\DeclareMathOperator{\syt}{SYT}

\DeclareMathOperator{\inv}{Inv}

\numberwithin{equation}{section}
\theoremstyle{definition}
  \newtheorem{theorem}{Theorem}[section]
  
  \newtheorem{proposition}[theorem]{Proposition}
  \newtheorem{lemma}[theorem]{Lemma}
  \newtheorem{claim}[theorem]{Claim}
  \newtheorem {corollary}[theorem]{Corollary}
  \newtheorem{definition}[theorem]{Definition}
  \newtheorem{example}[theorem]{Example}
  \theoremstyle{remark}
    \newtheorem{notation}[theorem]{Notation}

\newcommand{\stanmod}{\Delta_c}
\setcounter{MaxMatrixCols}{12} % to have more than 10 columns

\usepackage[colorinlistoftodos]{todonotes}

\newcommand{\Susanna}[1]{\todo[size=\tiny,inline,color=red!30]{#1
      \\ \hfill --- Susanna}}
\newcommand{\sftodo}[1]{\Susanna{#1}}

% needed to put a matrix in a caption
\newsavebox{\smlmat}% Box to store smallmatrix content
\savebox{\smlmat}{$\left(\begin{smallmatrix}1&2&3&4&5&6\\2&4&5&3&6&1\end{smallmatrix}\right)$}

\newsavebox{\smlmatInv}% Box to store smallmatrix content
\savebox{\smlmatInv}{$\left(\begin{smallmatrix}1&2&3&4&5&6\\6&1&4&2&3&5\end{smallmatrix}\right)$}

\ytableausetup
{mathmode, boxsize=1.2em}

\title{Unitary representations of the Cherednik algebra: $V^*$-homology}

\author{Susanna Fishel}

\author{Stephen  Griffeth}

\author{Elizabeth Manosalva}

\begin{document}

\begin{abstract}
We give a non-negative combinatorial formula, in terms of Littlewood-Richardson numbers, for the $V^*$-homology of the unitary representations of the cyclotomic rational Cherednik algebra, and as a consequence, for the graded Betti numbers for the ideals of a class of subspace arrangements arising from the reflection arrangements of complex reflection groups. \end{abstract}

\thanks{The second author acknowledges the financial support of Fondecyt Proyecto Regular 1190597.}

\maketitle

\section{Introduction}

\subsection{}  

This article is a contribution to the study of the unitary representations of the rational Cherednik algebra and their interaction with the combinatorial commutative algebra around arrangements of linear subspaces arising from complex reflection groups of the type considered in \cite{LiLi} and \cite{Sid}. The study of the unitary representations of rational Cherednik algebras was initiated by Etingof and Stoica in \cite{EtSt}, motivated by a question posed by Cherednik. Together with the second author, Etingof and Stoica obtained the classification of the irreducible unitary representations in category $\OO_c$ of the type $S_n$ rational Cherednik algebra; subsequently, the second author obtained the classification and graded dimensions of the irreducible unitary representations in category $\OO_c$ of the type $G(\ell,1,n)$ rational Cherednik algebra \cite{Gri3} (which we will refer to as the \emph{cyclotomic rational Cherednik algebra}). In this paper we build on these works together with results of Ciubotaru \cite{Ciu} and Huang-Wong \cite{HuWo} to refine the calculation of the graded dimension to obtain a manifestly non-negative Kazhdan-Lusztig type character formula, in terms of Littlewood-Richardson numbers.  

On the other hand,  the paper \cite{Gri4} studies the class of subspace arrangements to which Cherednik algebra methods may be applied, and in particular describes when the socle of the polynomial representation of the cyclotomic Cherednik algebra is a unitary representation and the ideal of a subspace arrangement. By combining that paper with this one, we obtain a combinatorial formula for the graded equivariant Betti numbers of a class of linear subspace arrangements containing the class studied in \cite{Sid} and generalizing the $k$-equals arrangement studied in \cite{LiLi}. In this direction, the paper \cite{BNS} proves the existence of a certain minimal resolution of the ideal of the $k$-equals arrangement (as conjectured in \cite{BGS}); here we give an explicit description, in terms of the representation-valued orthogonal polynomials introduced in \cite{Gri2}, of the maps between the standard modules appearing in this minimal resolution. 

Each object of category $\OO_c$ is in particular a graded $W$-representation. So given an irreducible object $L$ of $\OO_c$, its graded character $\mathrm{ch}(L)$ may be thought of as a formal power series in one variable whose coefficients are characters of $W$. On the other hand, category $\OO_c$ is a highest weight category with standard $\Delta_c(E)$ and irreducible $L_c(E)$ objects indexed by the irreducible complex representations $E$ of $W$. Knowledge of the groups $\mathrm{Ext}^i(\Delta_c(F),L_c(E))$ for all $i$ and $F$ is then a second sort of graded character formula for $L_c(E)$ (analogous to the Kazhdan-Lusztig character in Lie theory). These Ext groups may be conveniently packaged together in the $V$-cohomology groups $H^i(V,L_c(E))$  (or, as observed in \cite{Gri4}, the $V^*$-homology groups $H_i(V^*,L_c(E))$), which are analogous to the $\mathfrak{n}$-cohomology groups in Lie theory. 

The first main theorem of this article contains a combinatorial description of the $V^*$-homology groups for all unitary representations $L_c(E)$ in category $\OO_c$ of the cyclotomic rational Cherednik algebra, and a combinatorial description of the graded characters of the somewhat wider class of $\ttt$-diagonalizable modules, where $\ttt$ is a certain commutative subalgebra of the cyclotomic rational Cherednik algebra (unlike the unitary representations, which have a tendency to be infinite dimensional, there are very many interesting finite dimensional $\ttt$-diagonalizable representations). The second main theorem gives an explicit description, in terms of the analogs of Jack polynomials introduced by the second author in \cite{Gri2}, of the maps between the standard modules appearing in the minimal resolution of the $k$-equals ideal.

\subsection{Statement of results} Here we will state our main theorems (postponing to \ref{combinatorics} a detailed explanation of the combinatorics involved in the statement of Theorem \ref{main}), followed by a more explicit version for $\ell=1$ and $\ell=2$ in \ref{comb notation} and an application of the result to subspace arrangements and a small worked example in \ref{example}. For the moment, to orient the reader we mention: an $\ell$-partition $\lambda$ of $n$ is simply a sequence $\lambda=(\lambda^0,\lambda^1,\dots,\lambda^{\ell-1})$ of $\ell$ partitions with $n$ total boxes. The $\ell$-partitions of $n$ index the irreducible complex representations of $G(\ell,1,n)$, and we write $P_{\ell,n}$ for the set of all $\ell$-partitions of $n$. We write $\mathrm{ct}_c(\lambda)$ for the sum of the charged contents of the boxes of $\lambda$ (this is essentially the $c$-function from \ref{c function} of the representation indexed by $\lambda$), and, hoping it will not cause confusion, use the standard notation $c^D_\mu$ and $c^\lambda_{\mu \nu}=c^{\lambda \setminus \mu}_\nu$ for a cyclotomic version of the Littlewood-Richardson numbers (where $D$ is a skew $\ell$-diagram and $\lambda$, $\mu$, and $\nu$ are $\ell$-partitions). 

In addition to these ingredients, which will be familiar to experts, there are two more specialized combinatorial constructions required for the statement of our main theorem. Firstly, we will define a certain set $\mathrm{Tab}_c(\lambda)$ of tableaux on each $\lambda \in P_{\ell,n}$, which depends on the deformation parameter $c$, and we write $|Q|$ for the sum of the entries of $Q \in \mathrm{Tab}_c(\lambda)$.  Secondly, for each element $Q \in \mathrm{Tab}_c(\lambda)$ we will define a certain skew diagram $s_c(Q)$, whose construction likewise. depends in an important way on the deformation parameter. These ingredients given, we may now state the theorem.

\begin{theorem} \label{main} Let $\lambda$ be an $\ell$-partition of $n$.
\item[(1)] If $L_c(\lambda)$ is $\ttt$-diagonalizable, then
$$\mathrm{ch}(L_c(\lambda))=\sum_{\substack{Q \in \mathrm{Tab}_c(\lambda) \\ \mu \in P_{\ell,n}}} c^{s_c(Q)}_\mu [S^\mu] t^{|Q|}.$$ \item[(2)] If $L_c(\lambda)$ is unitary, then for each $\ell$-partition $\mu$ of $n$
$$
\mathrm{dim}_\CC\left[ \mathrm{Ext}^i_{\OO_c}(\Delta_c(\mu),L_c(\lambda))\right]=\sum_{\substack{Q \in \mathrm{Tab}_c(\lambda,)\\ \nu \in P_{\ell,n}, \ \eta \in P_{\ell,n-i}, \ \chi \in P_{\ell,i} \\ |Q|=\mathrm{ct}_c(\lambda)-\mathrm{ct}_c(\mu)-i}} c^{s_c(Q)}_\nu c^\nu_{\eta \chi} c^\mu_{\eta \chi^t},
$$
\end{theorem}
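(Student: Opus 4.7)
The plan is to prove part~(1) first and then feed it into part~(2).

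For part~(1), the hypothesis that $L_c(\lambda)$ is $\ttt$-diagonalizable lets us decompose $L_c(\lambda)$ into $\ttt$-weight spaces, and the combinatorics developed in \cite{Gri3} (and recalled in the combinatorial section below) parametrizes these by $\mathrm{Tab}_c(\lambda)$: each $Q \in \mathrm{Tab}_c(\lambda)$ contributes a weight family in graded degree $|Q|$, and the $W$-module structure carried by that family is the skew $W$-module associated to the skew $\ell$-diagram $s_c(Q)$. Expanding this skew character in irreducible $W$-characters via the cyclotomic Littlewood--Richardson rule produces the coefficient $c^{s_c(Q)}_\mu$, and summing over $(Q,\mu)$ yields the asserted formula for $\mathrm{ch}(L_c(\lambda))$.

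For part~(2), the strategy is to promote this character identity to one of $V^*$-homology groups. We use the standard identification
$$\mathrm{Ext}^i_{\OO_c}\!\bigl(\Delta_c(\mu),L_c(\lambda)\bigr)\;\cong\;\mathrm{Hom}_W\!\left(S^\mu,\, H_i(V^*,L_c(\lambda))_{d}\right),\qquad d=\mathrm{ct}_c(\lambda)-\mathrm{ct}_c(\mu)-i,$$
which translates the Ext computation into extracting $S^\mu$-isotypic components of $V^*$-homology in a prescribed charged-content degree. The groups $H_\bullet(V^*,L_c(\lambda))$ are computed by the Koszul complex $L_c(\lambda)\otimes\bigwedge^\bullet V^*$, and the key structural input is that, for unitary $L_c(\lambda)$, the results of Ciubotaru~\cite{Ciu} and Huang--Wong~\cite{HuWo} (building on \cite{EtSt} and \cite{Gri3}) imply that this complex is ``pure'' in the relevant grading: the Euler characteristic equals the homological dimension in each separate degree. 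Equivalently, unitary $L_c(\lambda)$ admits a linear BGG-style resolution by standard modules, and reading off the $i$-th term of that resolution is what the Ext computation amounts to.

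Granted purity, the formula is a direct representation-theoretic computation. Part~(1) provides the $S^\nu$-multiplicity in $\mathrm{ch}(L_c(\lambda))$ as $\sum_Q c^{s_c(Q)}_\nu$. The $W$-module $\bigwedge^i V^*$ for $W=G(\ell,1,n)$ is the restriction-then-sign piece from $G(\ell,1,n-i)\times G(\ell,1,i)$, so decomposing $S^\nu\otimes\bigwedge^i V^*$ and extracting the $S^\mu$-component is a two-step LR expansion, yielding exactly $\sum_{\eta,\chi} c^\nu_{\eta\chi}\,c^\mu_{\eta\chi^t}$ with $\eta\in P_{\ell,n-i}$ and $\chi\in P_{\ell,i}$ (the transpose on $\chi$ recording the sign twist from the exterior power). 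The charged-content degree condition $|Q|=\mathrm{ct}_c(\lambda)-\mathrm{ct}_c(\mu)-i$ is inherited from the $\ttt$-weight bookkeeping of part~(1) together with the homological shift in the Koszul complex, and pins down the graded piece of $H_i$ that contributes.

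The main obstacle is the purity statement: without unitarity, the Koszul differentials mix homological degrees and the Euler characteristic captures only alternating sums of dimensions, not the individual $\dim \mathrm{Ext}^i$. Establishing the vanishing/collapse across all unitary $L_c(\lambda)$ uniformly is where the classification of \cite{Gri3} and the Dirac-cohomology machinery of \cite{Ciu, HuWo} are essential; once this rigidity is in place, the Ext formula follows from the purely combinatorial expansion above.
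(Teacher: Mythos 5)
Your overall route coincides with the paper's: part (1) by decomposing the graded pieces of $L_c(\lambda)$ according to $Q$ and identifying each piece with a skew-shape module, then expanding via cyclotomic Littlewood--Richardson numbers; part (2) by combining that graded character with the decomposition of $S^\nu\otimes\Lambda^i V^*$ and a Dirac-cohomology input for unitary modules. But two steps are genuinely missing. In part (1), the assertion that the weight family attached to a fixed $Q$ ``carries'' the skew $W$-module for $s_c(Q)$ is exactly what must be proved, and it is not quotable from \cite{Gri3}: the paper establishes it (Theorem \ref{main version}) by showing that the span of the $f_{P,Q}$ with $Q$ fixed is a module over the degenerate affine Hecke algebra $H_{\ell,n}$ embedded through the Dunkl--Opdam subalgebra, that this module is irreducible (via Lemma 7.4 of \cite{Gri2}, which gives intertwiner-connectivity of the admissible $P$'s), and that its $\mathfrak{u}$-weights force it to be the skew-shape module $S^{s_c(Q)^r}$ by the classification in Theorem \ref{diag class}; indeed $s_c(Q)$ is \emph{defined} through this identification, so simply asserting the skew-module structure begs the question.

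In part (2) the structural input is misstated. The claim that unitarity is ``equivalently'' the existence of a linear BGG-style resolution by standard modules is unjustified and contradicts the paper itself, which records the existence of such resolutions for general unitary modules as an open question (known only in special cases, e.g.\ \cite{BNS}). Moreover, the purity-plus-Euler-characteristic mechanism you describe would not suffice even if granted: for a fixed isotype $F$, the Koszul subcomplex on which $\mathrm{eu}$ acts compatibly with $c_F$ has all of its terms in the same central character, so a Casselman--Osborne-type argument does not concentrate its homology in a single homological degree, and Euler characteristics only yield alternating sums, not the individual dimensions $\dim\mathrm{Ext}^i$. What is actually needed, and what the paper uses, is the Hodge-type decomposition of Theorem \ref{hodge decomposition} (Huang--Wong and Ciubotaru, via Theorem 1.3 of \cite{Gri4}): for unitary $L$ the differentials vanish on these matching isotypic subcomplexes, so $H_i(V^*,L)\cong\bigoplus_{c_F=a+i}(L_a\otimes\Lambda^i V^*)_F$ on the nose; once that is in hand, your two-step LR expansion of $S^\nu\otimes\Lambda^i V^*$ (the paper's \eqref{tensor formula}) and the degree bookkeeping finish the computation as you describe.
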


We note that the connection between Littlewood-Richardson numbers and graded multiplicities arising in representation theory has been observed before, for instance in \cite{ChTa}, \cite{LeTh} and \cite{LeMi}.

In analogy to formulas for $\mathfrak{n}$-cohomology of irreducible highest-weight representations of Lie algebras, one does not hope for explicit formulas for arbitrary irreducibles in category $\OO_c$: in general, the algorithm provided by Kahzdan-Lusztig polynomials (see e.g. \cite{RSVV}) is the best we can expect. But for the finite-dimensional $\ttt$-diagonalizable representations and the unitary representations, the theorem gives an effective (and practical) algorithm for computing (two different kinds of) characters. In \ref{comb notation} we will make the combinatorics more explicit still for $\ell=1$ and $\ell=2$. One interesting feature of these formulas is the following: for small values of $\ell$ and $n$, it seems that the dimensions of the relevant Ext groups are always $0$ or $1$. On the other hand, the form of the formula makes this seem a bit surprising. It would be interesting to collect more extensive numerical data. 

In \ref{BGG} we prove a second sort of result: we construct explicit polynomial maps between modules in the BGG resolution of a unitary representation of the type $A$ Cherednik algebra. This resolution was proved to exist in \cite{BNS}, confirming a conjecture of \cite{BGS}, by using ideas of Webster \cite{Web} to translate the problem into the world of diagram algebras. This raises a natural question: do unitary representations in general possess resolutions of BGG type by standard objects? At the moment we do not know of counterexamples, but neither do we have structural reasons for believing that BGG resolutions should exist, outside of the naive expectation that experience with classical Lie theory suggests. 

We note that both the existence and uniqueness statements of the next theorem are previously known (existence was proved in \cite{}, and uniqueness is known at least since \cite{BNS}). We do give an independent proof of these facts, but the main point here is the explicit polynomial formula, in terms of the representation-valued orthogonal functions $f_{\nu,S}$ from \cite{Gri3}, which are indexed by a pair consisting of a composition $\nu$ and a standard Young tableau $S$. 

\begin{theorem} \label{main 2}
  In case $\ell=1$, suppose $c=1/k$, and let $\lambda$ and $\gamma$ be
  partitions in the poset $\Pnk$ (see \ref{subsec:poset})
  such that $\gamma$ covers $\lambda$. Then there is a unique (up to
  scalars) homomorphism $\Delta_c(\gamma) \to \Delta_c(\lambda)$ of
  $H_c(S_n,\CC^n)$-modules, which may be chosen such that $f_{\zeroComp,T_0}
  \mapsto f_{\mu,S}$, where $(\mu,S)=\phi(\zeroComp,T_0)$ is as described in
  \ref{subsec:maps} and $T_0$ is the row-reading tableau on $\gamma$.
\end{theorem}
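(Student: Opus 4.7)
Since existence and uniqueness of the map are already known (from \cite{BNS}), the main content of the theorem is the explicit polynomial formula $f_{\zeroComp, T_0} \mapsto f_{\mu, S}$. The plan is to verify this formula directly, which will simultaneously yield an independent proof of existence and, with a brief addendum, of uniqueness.

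The main step is to show that $f_{\mu, S}$ together with its $S_n$-translates is annihilated by every Dunkl operator and transforms under $S_n$ as the Young basis vector of the irreducible $S^\gamma$ labeled by $T_0$. Granting this, the universal property of $\Delta_c(\gamma)$ produces a homomorphism $\Delta_c(\gamma) \to \Delta_c(\lambda)$ with $f_{\zeroComp, T_0} \mapsto f_{\mu, S}$. I would carry out the verification using the explicit action of Dunkl operators and of the simple transpositions on the $\ttt$-eigenbasis $\{f_{\nu, T}\}$ of $\Delta_c(\lambda)$ recorded in \cite{Gri2}: the action of a Dunkl operator on $f_{\mu, S}$ is an explicit linear combination of $f_{\nu', T'}$ of strictly lower polynomial degree, with structure constants that are rational functions of $c$ and of the contents of boxes of the relevant tableau. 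The main obstacle --- and the combinatorial heart of the theorem --- is the computation showing that at $c = 1/k$, with $(\mu, S) = \phi(\zeroComp, T_0)$ arising from a covering relation $\gamma \gtrdot \lambda$ in $\Pnk$, every one of these structure constants vanishes. This is exactly what the combinatorial definition of $\phi$ in \ref{subsec:maps} is engineered to produce, but verifying it requires a case analysis tied closely to the structure of $\Pnk$ (see \ref{subsec:poset}) and to the way $T_0$ interacts with the boxes of $\gamma \setminus \lambda$. The $S_n$-equivariance condition on $f_{\mu, S}$ should reduce, via the explicit $s_i$-action formulas from \cite{Gri2}, to a compatibility between the $\phi$-images of adjacent-transposition permutations of $T_0$ and the corresponding Young symmetrizer coefficients; this should follow from the design of $\phi$ rather than requiring any hard input.

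For uniqueness, I would argue that any $H_c(S_n, \CC^n)$-homomorphism $\Delta_c(\gamma) \to \Delta_c(\lambda)$ embeds into the space of $\ttt$-eigenvectors in $\Delta_c(\lambda)$ carrying the appropriate $\ttt$-weight and $S_n$-isotype, in the graded degree $\mathrm{ct}_c(\gamma) - \mathrm{ct}_c(\lambda)$. By the distinctness of $\ttt$-eigenvalues within each isotypic component of $\Delta_c(\lambda)$, this space is at most one-dimensional, and the singular vector $f_{\mu, S}$ exhibited above spans it; this gives the uniqueness statement.
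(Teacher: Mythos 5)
Your overall skeleton (exhibit a singular, $S_n$-stable copy of $S^\gamma$ inside $\Delta_c(\lambda)$ spanned by the $S_n$-orbit of $f_{\mu,S}$, conclude existence by the universal property of $\Delta_c(\gamma)$, and get uniqueness from a weight-multiplicity count) is the same strategy the paper follows, but the proposal skips the two things that actually constitute the proof. The central gap is that you treat $f_{\mu,S}$ as an available element of $\Delta_c(\lambda)$ at $c=1/k$. The generalized Jack polynomials $\jack{\mu}{T}$ are defined by the intertwiner recursion \eqref{eq:sigmajack} with coefficients that are rational functions of $c$, and at the special value $c=1/k$ they can, and in general do, acquire poles; before any ``direct verification'' of the formula can begin one must prove that $\jack{\mu}{\SIm}$ exists at $c=1/k$, and likewise the polynomials indexed by the near-image pairs $(s_i\mu,\SIm)$ that necessarily enter as soon as you let the $s_i$ (or $\sigma_i$) act to generate the copy of $S^\gamma$. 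This is exactly the content of Claim~\ref{claim:multOne} (no pole when the $\ttt$-eigenvalue has multiplicity one) together with all of Section~\ref{sec:multOne} (the abacus/region/residue/diagonal analysis culminating in Theorem~\ref{thm:multOne}), which is the combinatorial heart of the paper --- not, as your plan has it, a vanishing of structure constants for the Dunkl action. Your uniqueness paragraph silently relies on the same unproved fact: ``distinctness of $\ttt$-eigenvalues within each isotypic component'' is precisely what fails generically at special parameters such as $c=1/k$, and establishing it for the relevant weights is again the multiplicity-one theorem.

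A secondary but genuine gap is the equivariance and normalization. You assert that the compatibility of the $s_i$-action with $\phi$ ``should follow from the design of $\phi$ rather than requiring any hard input,'' but the paper needs Section~\ref{sec:looseEnds} to prove $\Map\sigma_i=\sigma_i\Map$ and, in particular, that the scalars $\mapconst{\TDo}$ are well defined (the braid and commutation checks for the constants $\sigconst{\cdot}{\cdot}{\cdot}$); without this one does not even know that sending $f_{\zeroComp,T_0}\mapsto f_{\mu,S}$ extends consistently to all of $S^\gamma$. Relatedly, \cite{Gri2} does not give the kind of closed formula for $y_i$ acting on the basis $\{\jack{\nu}{T}\}$ that your ``show every structure constant vanishes at $c=1/k$'' step presumes; singularity is accessed through the intertwiners, which brings you back to needing the near-image polynomials to be pole-free. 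So the route is the paper's route in outline, but the two pillars it must stand on --- non-singularity of the relevant $f$'s at $c=1/k$ and the equivariance/consistency of the constants --- are left unproven.
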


To obtain our results we use four main tools: firstly, the Hodge decomposition theorem for Dirac cohomology of rational Cherednik algebras as introduced in \cite{Ciu} and \cite{HuWo}; secondly, the Specht-module valued orthogonal functions generalizing Jack polynomials introduced in \cite{Gri2} as applied in \cite{Gri3}; thirdly, the representation theory of the cyclotomic degenerate affine Hecke algebra (this is quite similar to the case $\ell=1$ and we develop what we need of it in \ref{cyclotomic}); and finally, the combinatorics of partitions, including abaci, the Littlewood-Richardson rule and jeu de taquin. These combinatorics enter in three types of calculation: for the restriction rule from the cyclotomic degenerate affine Hecke algebra to the group algebra of the symmetric group, for proscribing poles of orthogonal functions, and for the description of the tensor product of $G(\ell,1,n)$-modules by the exterior powers of the dual of its reflection representation. 

\subsection{Combinatorics}  \label{combinatorics}

A \emph{partition} is a non-increasing sequence $\lambda_1 \geq \lambda_2 \geq \cdots \geq \lambda_\ell >0$ of positive integers. As usual we will identify partitions with their Young diagrams and visualize them as collections of boxes. The \emph{diagram} of $\lambda$ is the collection of points $(x,y)$ for $x$ and $y$ integers satisfying $1 \leq y \leq \ell$ and $1 \leq x \leq \lambda_y$, which we think of as a subset of $\RR^2$. More generally, we call a finite subset $D \subseteq \RR^2$ a \emph{skew shape} if whenever $(x,y) \in D$ and $(x+k,y+\ell) \in D$ for non-negative integers $k$ and $\ell$, then $(x+k',y+\ell') \in D$ for all integers $0 \leq \ell' \leq \ell$ and $0 \leq k' \leq k$. We will often think of the points of a skew shape as boxes. We define the \emph{content} of a box $b=(x,y)$ by 
$$\mathrm{ct}(b)=x-y.$$

A skew shape $D$ is \emph{integral} if $D \subseteq \ZZ^2_{>0}$. Each integral skew shape is the difference $D=\alpha \setminus \beta$ of two partitions (which are not uniquely determined by $D$). Boxes $b=(x,y)$ and $b'=(x',y')$ are \emph{adjacent} if either 
\begin{enumerate}
\item[(a)] $x=x'$ and $y=y' \pm 1$, or
\item[(b)] $y=y'$ and $x=x' \pm 1$. 
\end{enumerate} The equivalence classes for the equivalence relation on the boxes of a skew shape $D$ generated by adjacency are called the \emph{connected components} of $D$, and $D$ is \emph{connected} if it has only one connected component. 

Fixing an integer $\ell>0$, an $\ell$-partition is a sequence $\lambda=(\lambda^0,\lambda^1,\dots,\lambda^{\ell-1})$ of partitions (some of which may be empty). Likewise, an $\ell$-skew shapes is a sequence $D=(D^0,D^1,\dots,D^{\ell-1})$ of $\ell$ skew shapes, some of which may be empty. A box of $D$ is a box of some $D^j$, and if $b \in D^j$ is a box of $D^j$ we write 
$$\beta(b)=j.$$ If there are $n$ total boxes in $\lambda$ then we say that $\lambda$ is an \emph{$\ell$-partition of $n$}, and we write $P_{\ell,n}$ for the set of all $\ell$-partitions of $n$. Given $\lambda \in P_{\ell,n}$ we define its \emph{transpose} $\lambda^t$ by
$$\lambda^t=((\lambda^{1})^t,(\lambda^2)^t,\dots,(\lambda^{\ell-1})^t,(\lambda^0)^t),$$ where $\mu^t$ is the transpose of the partition $\mu$. (With our indexing of the complex irreducible representations of $G(\ell,1,n)$, the representation indexed by $\lambda^t$ is the tensor product of the representation $\Lambda^n V^*$, where $V=\CC^n$ is the defining representation, with the representation indexed by $\lambda$).

We order the boxes of each $\ell$-skew shape $D$ (and in particular, each $\ell$-partition) as follows:
$$b \leq b' \ \iff \ \beta(b)=\beta(b') \ \text{and} \ b=(x,y), \ b'=(x',y') \quad \text{with} \quad x' -x \in \ZZ_{\geq 0} \ \text{and} \ y'-y \in \ZZ_{\geq 0}.$$

Fixing positive integers $\ell$ and $n$, we write $G(\ell,1,n)$ for the group of  $n$ by $n$ matrices with exactly one non-zero entry in each row and each column, and such that the non-zero entries are $\ell$th roots of $1$. The irreducible complex representations of $G(\ell,1,n)$ are in bijection with $\ell$-partitions $\lambda$ of $n$, and for $\lambda \in P_{\ell,n}$ an $\ell$-partition of $n$ we write $S^\lambda$ for the corresponding irreducible representation. We write $V=\CC^n$ for the defining representation of $G(\ell,1,n)$ (which is irreducible if $\ell>1$). When $\ell$ is fixed we may abbreviate $G_n=G(\ell,1,n)$. 

We write $H_c$ for the rational Cherednik algebra attached to this group, $\ttt \subseteq H_c$ for the Dunkl-Opdam subalgebra, $\OO_c$ for the usual subcategory of $H_c$-mod, and  $L_c(\lambda)$ for the irreducible object of $\OO_c$ corresponding to an $r$-partition $\lambda$ of $n$ (see Section \ref{Prelim} for precise definitions). The subscript $c$ stands for the central parameters, which are a list 
$$c=(c_0,d_0,\dots,d_{\ell-1}) \in \RR^{\ell+1}$$ with $d_0+\cdots+d_{\ell-1}=0$. These parameters give rise to statistics on $\ell$-partitions as follows: given an $\ell$-partition $\lambda$ and a box $b \in \lambda$, we define its \emph{charged content} $\mathrm{ct}_c(b)$ by
$$\mathrm{ct}_c(b)=d_{\beta(b)}+\ell\mathrm{ct}(b) c_0,$$ and we define the \emph{charged content} of $\lambda$ to be the sum of the charged contents of its boxes
$$\mathrm{ct}_c(\lambda)=\sum_{b \in \lambda} \mathrm{ct}_c(b).$$ This statistic is essentially the $c$-function (see \ref{c function}) of the representation indexed by $\lambda$. 

\label{sec:stanModBasis}
For generic parameters $c$, each standard module has a basis $f_{P,Q}$ consisting of eigenfunctions for the Dunkl-Opdam subalgebra $\ttt$. For certain special values of the parameters these may have poles. However, each irreducible quotient $L_c(\lambda)$ that is $\ttt$-diagonalizable has a certain subset of these $f_{P,Q}$ as a basis; this subset is indexed by pairs $(P,Q)$ of tableaux on $\lambda$ with the following properties: 
\begin{itemize}
\item[(a)] $Q$ is a filling of the boxes of $\lambda$ by non-negative integers such that $Q(b) \leq Q(b')$ whenever $b \leq b'$,
\item[(b)] $P$ is a bijection from the boxes of $\lambda$ to the set of integers $\{1,2,\dots,n\}$, such that if $b \leq b'$ and $Q(b)=Q(b')$ then $P(b) > P(b')$, 
\item[(c)] If $b$ is a box of $\lambda$ and $k$ is a positive integer such that
$$\mathrm{ct}_c(b)=d_{\beta(b)-k}+k$$ then $Q(b) < k$, and
\item[(d)] If $b$ and $b'$ are boxes of $\lambda$ and $k$ is a positive integer with $k=\beta(b)-\beta(b')$ mod $\ell$ and such that 
$$\mathrm{ct}_c(b)-\mathrm{ct}_c(b')=k \pm \ell c_0$$ then $$Q(b) \leq Q(b')+k \quad \text{with equality implying} \quad P(b) > P(b').$$
\end{itemize} We write $\Gamma_c(\lambda)$ for the set of such pairs $(P,Q)$, and define $\mathrm{Tab}_c(\lambda)$ to be the set of $Q$ such that there exists a $P$ with $(P,Q) \in \Gamma_c(\lambda)$, so that $\mathrm{Tab}_c(\lambda)$ is the projection of $\Gamma_c(\lambda)$ on its second coordinate
$$\mathrm{Tab}_c(\lambda)=\pi_2(\Gamma_c(\lambda)).$$ 

For each $Q \in \mathrm{Tab}_c(\lambda)$ there is a (unique up to diagonal slides of its connected components) skew diagram $s_c(Q)$ defined as follows: choose any $P$ with $(P,Q) \in \Gamma_c(\lambda)$. Theorem \ref{diag class} and the first line of the proof of Theorem \ref{main version} give an algorithm determining a standard Young tableau $T$ with $$\beta(T^{-1}(i))=\beta(P^{-1}(n-i+1))-Q(P^{-1}(n-i+1))$$ and $$\mathrm{ct}(T^{-1}(i))=\mathrm{ct}(P^{-1}(n-i+1))-\frac{1}{\ell c_0}(Q(P^{-1}(n-i+1))-(d_{\beta(P^{-1}(n-i+1))}-d_{\beta(P^{-1}(n-i+1))-Q(P^{-1}(n-i+1))})).$$ Then we define $s_c(Q)$ to be the shape of $T$, which is independent of the choice of $P$. We define the \emph{degree} $|Q|$ of $Q$ by
$$|Q|=\sum_{b \in \lambda} Q(b).$$ 

Given $\ell$-partitions $\alpha \vdash m+n$, $\beta \vdash m$, and $\gamma \vdash n$, the (cyclotomic) \emph{Littlewood-Richardson number} $c^\alpha_{\beta \gamma}$ is the dimension
$$c^\alpha_{\beta \gamma}=\mathrm{dim}_\CC(\mathrm{Hom}_{G_{m+n}}(\mathrm{Ind}^{G_{m+n}}_{G_m \times G_n}(S^\beta \otimes S^\gamma),S^\alpha)).$$ By Frobenius reciprocity and the tensor-hom adjunction we also have
$$c^\alpha_{\beta \gamma}=\mathrm{dim}_\CC(\mathrm{Hom}_{G_{n}}(S^\gamma,\mathrm{Hom}_{G_m}(S^\beta,S^\alpha))).$$ 

Given an integral skew shape $D=\alpha \setminus \beta$ we define
$$c^D_\gamma=c^{\alpha \setminus \beta}_\gamma=c^\alpha_{\beta \gamma},$$ which is independent of the choice of $\alpha$ and $\beta$, and somewhat more generally, if $D$ is an arbitrary skew shape then for any integral skew shape $D'$ whose connected components may be obtained from those of $D$ by horizontal slides, we define 
$$c^D_\mu=c^{D'}_\mu.$$ This definition is independent of the choice of such a $D'$. In this fashion we have defined Littlewood-Richardson numbers $c^D_\mu$ for all $\ell$-skew shapes $D$, and which, as is explained in \ref{branching}, controls the branching rule from the irreducible affine Hecke algebra module indexed by $D$ to $G_n$. 

These cyclotomic Littlewood-Richardson numbers are related to the classic ($\ell=1$) Littlewood-Richardson numbers in a very simple way: if $D=(D^0,D^1,\dots,D^{\ell-1})$ and $\mu=(\mu^0,\mu^1,\dots,\mu^{\ell-1})$ then
$$c^D_\mu=\prod_{j=0}^{\ell-1} c^{D_j}_{\mu_j}.$$ Moreover, these classical LR numbers may be computed via a number of combinatorial constructions. Here we describe the most classical: A \emph{Littlewood-Richardson tableau} on a skew diagram $D$ (for $\ell=1$) is a function $T: D \rightarrow \ZZ_{>0}$ such that
\begin{itemize}
\item[(a)] The tableau $T$ is \emph{column strict} in the sense that $T(x,y)<T(x,y+1)$ whenever $(x,y), (x,y+1) \in D$ and $T(x,y) \leq T(x+1,y)$ whenever $(x,y),(x+1,y) \in D$, and
\item[(b)] The row-reading word $T_1 T_2 \cdots T_n$ (obtained by reading the entries of $T$ from top to bottom and right to left) of $T$ satisfies the \emph{LR property}: for each integer $i \in \ZZ_{>0}$ and each $1 \leq k \leq n$, the number of occurrences of $i$ in the sequence $T_1 T_2 \cdots T_k$ is at least as large as the number of occurrences of $i+1$.  
\end{itemize} The \emph{weight} of a tableau $T$ is the sequence $\nu_1,\nu_2,\dots$ where $\nu_i$ is the number of boxes $b\in D$ with $T(b)=i$. The Littlewood-Richardson coefficient $c^D_\nu$ is then the number of Littlewood-Richardson tableaux on $D$ of weight $\nu$.

\subsection{Combinatorics in type A and B} \label{comb notation} Here we make part (2) of Theorem \ref{main} more explicit for $\ell=1$ and $\ell=2$, corresponding to the Weyl groups of types $A$ and $B$. We begin with the simpler case $\ell=1$, and then explain the modifications necessary for $\ell=2$. We will explain in both cases how to interpret everything in the sum on the right-hand side of part (2) of Theorem \ref{main} for these cases. (Also, though the theorem is stated in a uniform fashion, to apply it in examples one must use the classification theorems from \cite{EtSt} and \cite{Gri3}.) 

Firstly, in type $A$ the parameter $c$ is simply a number, and the statistic $\mathrm{ct}_c(b)$ is 
$$\mathrm{ct}_c(b)=c \cdot \sum_{b \in \lambda} \mathrm{ct}(b).$$ The most interesting unitary representations occur for $c=1/k$ with $k \geq 2$ an integer. For this choice of $c$, a partition $\lambda$ indexes a unitary representation if and only if the hook extending from the box of $\lambda$ of smallest content to the removable box of $\lambda$ of largest content is of length at most $k$. We assume we have fixed such a $\lambda$.

We define the set $\mathrm{Tab}_{1/k}(\lambda)$ of \emph{$k$-admissible tableaux on $\lambda$} as follows: a $k$-admissible tableaux on $\lambda$ is a filling $Q: \lambda \to \Z_{\geq 0}$ of the boxes of $\lambda$ by non-negative integers which is weakly increasing left to right and top to bottom (with the English conventions), and in addition satisfies the following admissibility condition that depends on $k$:
$$Q(b) \leq Q(b')+1 \quad \hbox{if $\mathrm{ct}(b)=\mathrm{ct}(b')+k-1$.}$$ 

Given a $k$-admissible tableau $Q$ on $\lambda$ we define a skew shape $s_{1/k}(Q)$ as follows: first, write $m=\lambda_1$ for the number of columns of $\lambda$. The skew shape $s(Q)$ is the union of the following skew shapes, indexed by non-negative integers $j$: for each non-negative integer $j$ form the skew shape consisting of the boxes $b$ of $\lambda$ with $Q(b)=j$, translated $j m$ positions to the left and $j(k-m)$ positions down in the plane. So, the boxes filled with $0$'s don't move, the boxes filled with $1$'s get translated $m$ positions to the left and $k-m$ positions down (decreasing their content by exactly $k$), the boxes filled with $2$'s get translated $2m$ positions to the left and $2(k-m)$ positions down (decreasing their contents by exactly $2k$) and so on. Thus for $k=4$ and $Q$ the filling 

$$ Q=\begin{ytableau}
0& 1 & 1 \\
0 & 1
\end{ytableau} $$ we have
$$
s_{1/4}(Q)=\ydiagram{2+1,3,1}
$$ 

These are the ingredients necessary to understand the formulae in Theorem \ref{main} for $\ell=1$. 

All the constructions for $\ell=1$ have $\ell=2$ analogs, which we now describe in order to explicitly compute using Theorem \ref{main} when $W=G(2,1,n)$ is the Weyl group of type $B$. In contrast to the case $\ell=1$, the classification of unitary representations is quite intricate, and we do not state it here (see \cite{Gri3}). 

The parameter is a pair $(c,d)$ of real numbers. We identify each $2$-partition $\lambda=(\lambda^0,\lambda^1)$ with its Young diagram, which is a subset of $\ZZ^2 \times \ZZ/2$. For the remainder of this subsection, we interpret \emph{skew shape} to mean a pair $D=(D^0,D^1)$ of skew shapes; we thus regard it as a subset of $\RR^2 \times \ZZ/2$. We fix a $2$-partition $\lambda$. The \emph{charged content} of a box $b$ of $\lambda$ is
$$\mathrm{ct}_c(b)=\begin{cases} d+ 2 \mathrm{ct}(b) c \quad \hbox{if $b \in \lambda^0$, and} \\ -d+2 \mathrm{ct}(b) c \quad \hbox{if $b \in \lambda^1$.} \end{cases} $$ and as above $\mathrm{ct}_c(\lambda)$ is the sum of the charged contents of the boxes of $\lambda$.

Now we define the set of \emph{$c$-admissible} tableaux $\mathrm{Tab}_c(\lambda)$. It consists of all tableaux $Q: \lambda \to \ZZ_{\geq 0}$ with the following properties:
\begin{itemize}
\item[(a)]  $Q(b) \leq Q(b')$ whenever $b \leq b'$, 
\item[(b)] $Q(b) < k$ if $k$ is an odd positive integer and $$d+\mathrm{ct}(b) c=k/2,$$ or if $k$ is an even positive integer and $$\mathrm{ct}(b) c=k/2,$$ and
\item[(c)] $Q(b) \leq Q(b')+k$ if $k=\beta(b)-\beta(b')$ mod $2$ and $\mathrm{ct}_c(b)-\mathrm{ct}_c(b')=k \pm \ell c$. 
\end{itemize}

We will now describe how to construct the skew shape $s_c(Q)$. In all cases it is obtained by applying the $k$th iterate $s_c^k$ of a certain function $s_c$ to each box $b \in \lambda$ with $Q(b)=k$. However, to define the function $s_c$, we must distinguish two cases. By symmetry we may assume $\lambda^0 \neq \emptyset$. 

Case 1. First assume either that $\lambda^1 \neq \emptyset$, or that $\lambda^1=\emptyset$ but the equation $$d+\mathrm{ct}(b) c =1/2$$ does not hold, where $b$ is the removable box of $\lambda^0$ of largest content. In this case, we define the \emph{$c$-shifting function} $s_c: \RR^2 \times \ZZ/2 \to \RR^2 \times \ZZ/2$ by
$$s_c(x,y,0)=(x-\lambda^0_1,y-\lambda^0_1+\frac{1}{2c}-\frac{d}{c},1) \quad \text{and} \quad s_c(x,y,1)=(x-\lambda^1_1,y-\lambda^1_1+\frac{1}{2c}+\frac{d}{c},0),$$ where if $\lambda^i=\emptyset$ then we interpret $\lambda^i_1=0$. 

Case 2. Now assume that $\lambda^1=\emptyset$ and the equation $$d+\mathrm{ct}(b) c =1/2$$ holds. In this case we put
$$s_c(x,y,0)=(x-p,y-p+\frac{1}{2c}-\frac{d}{c},1) \quad \text{and} \quad s_c(x,y,1)=(x,y+\frac{1}{2c}+\frac{d}{c},0),$$ where $p$ is the length of the second longest part of $\lambda^0$.

With this description of $s_c(Q)$ we have now specified all the ingredients needed to evaluate the sum in part (2) of Theorem \ref{main}. 

\subsection{Subspace arrangements and an example} \label{example} As observed in \cite{Gri4}, the socle of the polynomial representation of the type $G(\ell,1,n)$ rational Cherednik algebra has a tendency to be unitary when it is a radical ideal. This provides an application of the character formula proved here to computing the syzygies of certain ideals.

Fix integers $2 \leq k \leq n$ and $0 \leq m \leq k-1$ and define 
$$X_k=\{(x_1,\dots,x_n) \in \CC^n \ | \ \exists \ S \subseteq \{1,2,\dots,n\}, \ |S|=k, \ x_i^\ell=x_j^\ell \ \forall \ i,j \in S \}$$ and
$$Y_{m+1}=\{(x_1,\dots,x_n) \in \CC^n \ | \ \exists \ S \subseteq \{1,2,\dots,n\}, \ |S|=m+1, \ x_i=0 \ \forall \ i \in S \}.$$ Let $I$ be the ideal of the union $X_k \cup Y_{m+1}$. By Theorem 4.5 of \cite{Gri4}, for parameters satisfying $c_0=1/k$ and $d_0-d_{\ell-1}+\ell m c_0=1$ the ideal $I$ is an $H_c$-stable submodule of the polynomial representation, and provided the inequalities $d_0-d_{-p}+\ell m' c_0 < p$ hold for all pairs $(p,m')$ of integers satisfying the inequalities in part (d) of Theorem 1.6 of \cite{Gri4}, it is unitary (and thus equal to the socle; moreover the set of parameters for which this holds is non-empty). 

We describe its lowest weight, which is a certain $\ell$-partition $\lambda$. Combined with Theorem \ref{main}, this gives a combinatorial formula for the Betti numbers of $I \cong L_c(\lambda)$. Divide $n$ by $k-1$ to obtain a quotient $q$ and remainder $r$, with $n=q(k-1)+r$. We record the result of this division as the Young diagram of the partition $\lambda$ with $q$ parts of size $k-1$ and (if $r \neq 0$) one part of size $r$. We then split $\lambda$ into two partitions $\lambda^0$ and $\lambda^{\ell-1}$, by taking $\lambda^0$ to consist of the first $m$ columns of $\lambda$, and $\lambda^{\ell-1}$ to consist of the rest. Finally, put $\lambda=(\lambda^0,\emptyset, \dots,\emptyset,\lambda^{\ell-1})$. 

We now give a small example that illustrates all the preceding. Suppose $\ell=2$, $n=4$, $k=3$, and $m=1$. That is, we look at the ideal of the set of points in $\CC^4$ such that either some three coordinates are equal to one another up to a sign, or some two coordinates are zero. First of all, by the results of \cite{Gri4} it follows that this ideal is generated in degree $6$ by the $G(2,1,4)$-representation generated by the polynomial 
$$f(x)=x_1 x_2 (x_1^2-x_2^2)(x_3^2-x_4^2),$$ which is the irreducible representation indexed by
$$\lambda=\ydiagram{1,1} \ ,\ \ydiagram{1,1} . $$ Moreover, for $c=1/3$ and $d=1/6$, this is a unitary subrepresentation of the polynomial representation of the $G(2,1,4)$ Cherednik algebra. The charged content sum of $\lambda$ is $\mathrm{ct}_c(\lambda)=-4/3$, and the only possible $\mu \neq \lambda$ for which the sum on the right-hand side of part (2) of Theorem \ref{main} can be non-zero are therefore those which $\mathrm{ct}_c(\mu) \in \{-7/3,-10/3,-13/3,\dots \}$. There are two of these: 
$$\ydiagram{1} \ , \ \ydiagram{1,1,1} \quad \text{and} \quad  \ydiagram{1,1,1,1} \ , \ \emptyset $$ with charged content sums $-7/3$ and $-10/3$, respectively. 

For the first possible $\mu$ above, since $-4/3-(-7/3)=1$, we can have $\mathrm{Ext}^i(\Delta_c(\mu),L_c(\lambda)) \neq 0$ only for $i=1$, in which case $Q$ is the zero tableau and we have $\nu=\lambda$ in the sum. Now the Littlewood-Richardson number $c^\alpha_{\beta \gamma}$ is zero unless the diagram of $\beta$ is contained in that of $\alpha$, and so the product $c^\lambda_{\eta \chi} c^\mu_{\eta,\chi^t}$ can be non-zero only when the diagram of $\eta$ is contained in the intersection of $\lambda$ with $\mu$, which implies that 
$$\eta=\ydiagram{1} \ ,\ \ydiagram{1,1}  \quad \text{and } \quad \chi=\emptyset \ ,\ \ydiagram{1} $$ and the relevant Ext-dimension is just $1$. 

For the second possible $\mu$, since $-4/3-(-10/3)=2$, we might have a non-zero Ext in homological dimension $i=1$ or $i=2$.  If $i=2$ then again we must have $Q$ equal to the zero tableau, and $c^\lambda_{\eta \chi} c^\mu_{\eta,\chi^t}$ is zero for all choices of $\eta$ and $\chi$ since the intersection of $\lambda$ with $\mu^t$ is too small. Thus this second Ext-group is zero-dimensional. For $i=1$, there are two possible $Q$'s that contribute to the sum. Namely, $Q$ must be one of the following:
$$
Q=\begin{ytableau}
0 \\ 1
\end{ytableau} \ , \  \begin{ytableau}
0 \\ 0
\end{ytableau} \quad \text{or} \quad Q=\begin{ytableau}
0 \\ 0
\end{ytableau} \ , \ \begin{ytableau}
0 \\ 1
\end{ytableau}.$$ These produce the skew diagrams
$$s_c(Q)=\ydiagram{1} \ , \ \ydiagram{1+1,2} \quad \text{and} \quad s_c(Q)= \ydiagram{1+1,1+1,1} \ , \ydiagram{1}.$$ One checks as above that the first of these produces no contribution to the sum, and the second produces a contribution of $1$, with 
$$\nu=\ydiagram{1,1,1} \ , \ \ydiagram{1} \quad \eta=\ydiagram{1,1,1} \ , \ \emptyset \quad \text{and} \quad \chi=\emptyset \ , \ \ydiagram{1}.$$ Summing up, each of the $2$-partitions $\mu$ above has the relevant $Ext^1$ group of dimension $1$, and all other non-trivial Ext groups are zero. So this ideal is generated in degree $6$ by a six-dimensional space of generators, with a four-dimensional space of linear relations and a one-dimensional space of quadratic relations. There are no higher syzygies.

\section{Cherednik algebras, category $\OO$, and homology of unitary representations} \label{Prelim}

\subsection{The rational Cherednik algebra} Let $V$ be a finite dimensional $\CC$-vector space and let $W \subseteq \mathrm{GL}(V)$ be a finite group of linear transformations of $V$. The set of \emph{reflections} in $W$ is
$$R=\{ r \in W \ | \ \mathrm{codim}_V(\mathrm{fix}_V(r))=1 \}.$$ For each reflection $r \in R$ we fix $\alpha_r \in V^*$ and $c_r \in \CC$ with
$$\mathrm{fix}_V(r)=\{v \in V \ | \ \langle \alpha_r,v \rangle=0\} \quad \text{and} \quad c_{wrw^{-1}}=c_r \quad \hbox{for all $r \in R$ and $w \in W$.}$$ Each $y \in V$ defines a \emph{Dunkl operator} on $\CC[V]$, given by
$$y(f)=\partial_y(f)-\sum_{r \in R} c_r \langle \alpha_r,y \rangle \frac{f-r(f)}{\alpha_r} \quad \hbox{for $f \in \CC[V]$.}$$ The \emph{rational Cherednik algebra} is the subalgebra $H_c=H_c(W,V)$ of $\mathrm{End}_\CC(\CC[V])$ generated by $W$, the ring $\CC[V]$ acting on itself by multiplication, and the Dunkl operators for all $y \in V$.

\subsection{Real parameters} We call the parameter $c=(c_r)_{r \in R}$ \emph{real} if $c_{r^{-1}}=\overline{c_r}$ for all $r \in R$. We will assume for the entirety of this article that the parameter is real. This is no loss of generality, and ensures that the Fourier transform and contravariant forms on standard modules are well-defined (see below). 

\subsection{The PBW theorem} The rational Cherednik algebra may be presented as the algebra generated by the group algebra $\CC W$, the commuting operators $y \in V$, and the commuting operators $x \in V^*$, subject to the further relations
$$w x w^{-1}=w(x), \ w y w^{-1}=w(y), \ \text{and} \ yx-xy=\la x,y \ra -\sum_{r \in R} c_r \la \alpha_r,y \ra \la x,\alpha_r^\vee \ra r$$ for $w \in W$, $x \in V^*$, and $y \in V$, where $\alpha_r^\vee \in V$ is determined by the requirement
$$r(x)=x-\la x, \alpha_r^\vee \ra \alpha_r \quad \hbox{for all $x \in V^*$.}$$ The \emph{PBW theorem} states that multiplication induces an isomorphism from $\CC[V] \otimes \CC W \otimes \CC[V^*]$ onto the algebra with this presentation.

\subsection{Category $\OO_c$}  The category $\OO_c$ consists of all finitely-generated $H_c$-modules on which each Dunkl operator $y \in V$ acts locally nilpotently. 

\subsection{Standard modules} \label{sec:stanMod} Let $E \in \mathrm{Irr}(\CC W)$ be an irreducible representation. The \emph{standard module} $\Delta_c(E)$ for $H_c$ is the induced representation
$$\Delta_c(E)=\mathrm{Ind}_{\CC[V^*] \rtimes W}^{H_c}(E),$$ where we write $\CC[V^*] \rtimes W$ for the subalgebra of $H_c$ generated by $V$ and $W$, which acts on $E$ via
$$y e=0 \quad \hbox{for all $y \in V$ and $e \in E$.}$$ As a $\CC[V] \rtimes W$-module we have
$$\Delta_c(E) \cong \CC[V] \otimes E.$$

\subsection{The Fourier transform} We fix a $W$-invariant positive definite Hermitian form on $V$, inducing a $W$-equivariant conjugate linear isomorphism $V \rightarrow V^*$, written $y \mapsto \overline{y}$, with inverse $V^* \rightarrow V$ also written $x \mapsto \overline{x}$. If $W$ acts irreducibly on $V$ these are determined up to multiplication by a positive real number.

There is a conjugate linear  anti-automorphism of $H_c$, the \emph{Fourier transform}, that we write $h \mapsto \overline{h}$ for $h \in H_c$, induced by the isomorphisms $y \mapsto \overline{y}$ of $V$ onto $V^*$ and $x \mapsto \overline{x}$ of $V^*$ onto $V$ and the requirement $\overline{w}=w^{-1}$ for $w \in W$. Here we need the reality of the parameter $c$.

\subsection{The contravariant form} Fix a positive definite $W$-equivariant Hermitian form $(\cdot,\cdot)$ on $E$ (this is unique up to mulitplication by positive real numbers). The \emph{contravariant form} is the unique Hermitian form $(\cdot,\cdot)_c$ on $\Delta_c(E)$ that restricts to $(\cdot,\cdot)$ on $E$ and is such that
$$(hm_1,m_2)_c=(m_1,\overline{h} m_2)_c \quad \hbox{for all $m_1,m_2 \in \Delta_c(E)$ and $h \in H_c$.}$$ 

\subsection{Irreducible unitary representations} We write $L_c(E)$ for the quotient of $\Delta_c(E)$ by the radical of the contravariant form, and by abuse of notation write $(\cdot,\cdot)_c$ for the the induced form on $L_c(E)$. If this is positive definite, then we say that $L_c(E)$ is \emph{unitary}. In any case, $L_c(E)$ is an irreducible $H_c$-module, and as $E$ ranges over the irreducible complex representations of $W$, $L_c(E)$ ranges over the irreducible objects of $\OO_c$. 

\subsection{The Euler element} \label{c function} The \emph{Euler element} is the operator
$$\mathrm{eu}=\sum_{i=1}^n x_i \partial_{y_i},$$ where $y_1,\dots,y_n$ is a basis of $V$ with dual basis $x_1,\dots,x_n$ of $V^*$. It acts on a homogeneous polynomial $f$ of degree $m$ by 
$$\mathrm{eu}(f)=mf.$$ It happens that $\mathrm{eu} \in H_c$: this follows from the formula
$$\mathrm{eu}=\sum_{i=1}^n x_i y_i +\sum_{r \in R} c_r (1-r).$$

Given $a \in \CC$ and an $H_c$-module $M$, we define the $a$-weight space $M_a$ by
$$M_a=\{ m \in M \ | \ \hbox{there exists $N \in \ZZ_{>0}$ with $(\mathrm{eu}-a)^N m=0$} \}.$$ For the standard module $\Delta_c(E)$ we have
$$\Delta_c(E)=\bigoplus_{d \in \ZZ_{\geq 0}} \Delta_c(E)_{c_E+d},$$ where $c_E$ is the scalar by which $\sum_{r \in R} c_r(1-r)$ acts on $E$.

\subsection{Duality} Each $M \in \OO_c$ is the direct sum of its $\mathrm{eu}$-weight spaces,
$$M=\bigoplus_{a \in \CC} M_a,$$ or in other words each module in $\OO_c$ is graded by the eigenspaces for $\mathrm{eu}$. 

There is a duality $D$ on the category $\OO_c$ given by graded conjugate-linear duality of the underlying complex vector spaces. Specifically, for a $\CC$-vector space $U$ we write 
$$U^\vee=\{f:U \rightarrow \CC \ | \ f(u+v)=f(u)+f(v) \ \text{and} \ f(au)=\overline{a} f(u) \quad \hbox{for $u,v \in U$ and $a \in \CC$} \}$$ for the conjugate-linear dual of $U$. Given a representation $M$ in $\OO_c$ with $\mathrm{eu}$-weight space decomposition 
$$M=\bigoplus M_\lambda,$$ we define $$DM=\bigoplus M_\lambda^\vee,$$ with $H_c$-action given by the formula
$$(hf)(m)=f(\overline{h} m) \quad \hbox{for $f \in DM$, $h \in H_c$, and $m \in M$.}$$

\subsection{Homology of unitary representations} The following theorem, which relies crucially on results of  Huang-Wong \cite{HuWo} (see also Ciubotaru \cite{Ciu}) might be thought of as the analog of the Hodge decomposition theorem for unitary representations of rational Cherednik algebras. It allows us to convert information about the graded character into information about the Kazhdan-Lusztig character.

\begin{theorem} \label{hodge decomposition} Suppose $L$ is a unitary $H_c$-module. Then as graded $\CC W$-modules
$$H_i(V^*,L) \cong \bigoplus_{c_F=a+i} (L_a \otimes \Lambda^i V^*)_F$$ and
$$\mathrm{dim} \left( \mathrm{Ext}^i (\Delta_c(F),L) \right)=\mathrm{dim} \left( \mathrm{Hom}_{\CC W} (F,H^i(V^*,L) \right).$$
\end{theorem}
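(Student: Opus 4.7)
The plan is to prove the two statements separately: the Ext identity by formal homological algebra via a Koszul resolution of $\Delta_c(F)$, and the explicit description of $H_i(V^*,L)$ by a Hodge-type argument using the Dirac operator machinery of \cite{Ciu} and \cite{HuWo} applied to the Koszul complex of a unitary module.

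For the Ext identity I would begin from the presentation $\Delta_c(F)=\mathrm{Ind}_{\CC[V^*]\rtimes W}^{H_c}F$ with $V$ acting trivially on $F$, and induce up the standard Koszul resolution of $F$ over $\CC[V^*]\rtimes W=S(V)\rtimes W$. This yields a free resolution of $\Delta_c(F)$ whose $i$-th term is $H_c\otimes_{\CC W}(F\otimes\Lambda^i V)$. Applying $\mathrm{Hom}_{H_c}(-,L)$ and the tensor-Hom adjunction rewrites $\mathrm{Ext}^i_{\OO_c}(\Delta_c(F),L)$ as the $i$-th cohomology of the complex $\mathrm{Hom}_{\CC W}(F,L\otimes\Lambda^\bullet V^*)$ with the Chevalley-Eilenberg coboundary computing $H^i(V^*,L)$. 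Since $\CC W$ is semisimple, $\mathrm{Hom}_{\CC W}(F,-)$ is exact and passes through cohomology, producing the desired formula. This step uses no unitarity of $L$.

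For the homology formula I would compute $H_i(V^*,L)$ via the Koszul complex $K_\bullet=L\otimes\Lambda^\bullet V^*$ with the degree-lowering differential $\partial$ given by contraction against the multiplication action of $V^*\subset H_c$ on $L$. Using unitarity, the positive-definite contravariant form on $L$ combines with the natural Hermitian form on $\Lambda^\bullet V^*$ (induced from the form on $V$ via the Fourier transform) to make $K_\bullet$ a Hermitian complex. The Fourier transform $x\mapsto\overline{x}$ turns multiplication by $x\in V^*$ into the Dunkl operator $\overline{x}\in V$, and this makes the adjoint $\partial^*$ of $\partial$ explicit; set $D=\partial+\partial^*$ and $\Delta=D^2=\partial\partial^*+\partial^*\partial$. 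Standard Hodge theory identifies $\ker\Delta\cap K_i$ simultaneously with $H_i(V^*,L)$ and $H^i(V^*,L)$, so the Hodge decomposition controls both formulas at once.

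The remaining task is to compute $\Delta$. Using the PBW commutator $yx-xy=\langle x,y\rangle-\sum_r c_r\langle\alpha_r,y\rangle\langle x,\alpha_r^\vee\rangle r$ and the identity $\mathrm{eu}=\sum_i x_i y_i+\sum_r c_r(1-r)$, one computes that $\Delta$ acts on the $F$-isotypic part of $L_a\otimes\Lambda^i V^*$ as a scalar proportional to $a+i-c_F$; this is the rational-Cherednik incarnation of the Parthasarathy-type Dirac identity, supplied here by \cite{Ciu} and \cite{HuWo}. Because $L$ is unitary, $\Delta$ is non-negative Hermitian, so $\ker\Delta$ is exactly the sum over those isotypic components where $c_F=a+i$. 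The hard step is precisely this Dirac-Laplacian identity, in particular confirming the sign and normalization of the scalar, and that no cross-terms between weight spaces or isotypic pieces survive. Everything else is formal Hodge theory combined with the Koszul computation, and is finite-dimensional because each weight space $L_a$ is finite-dimensional in $\OO_c$.
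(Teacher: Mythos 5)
Your outline follows essentially the route the paper intends: the paper's own proof is a citation of Theorem 1.3 of \cite{Gri4}, resting on the duality of $\OO_c$ and the Dirac-cohomology results of \cite{Ciu} and \cite{HuWo}, and your two-step plan (Koszul resolution of $\Delta_c(F)$ for the Ext identity; a Hodge/Dirac argument on $L\otimes \Lambda^\bullet V^*$ with the Parthasarathy-type Laplacian computation deferred to \cite{Ciu}, \cite{HuWo}) is that argument.

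One point needs correcting, and it is exactly the point the paper flags by saying the proof ``uses the duality of $\OO_c$.'' The induced Koszul resolution has $i$-th term $H_c\otimes_{\CC W}(F\otimes\Lambda^i V)$ and its differential multiplies by the elements $y\in V$; hence after applying $\mathrm{Hom}_{H_c}(-,L)$ you get the complex $\mathrm{Hom}_{\CC W}(F,L\otimes\Lambda^\bullet V^*)$ whose differential is the Chevalley--Eilenberg differential for the Dunkl operators, so its cohomology is $\mathrm{Hom}_{\CC W}(F,H^i(V,L))$, not $\mathrm{Hom}_{\CC W}(F,H^i(V^*,L))$ as you assert. Consequently your remark that ``this step uses no unitarity of $L$'' is true only for the $V$-cohomology version; converting to the $V^*$-side, which is what the theorem states and what the Betti-number application requires, is precisely where unitarity (equivalently, self-duality of $L$ under the duality $D$ of $\OO_c$) must be invoked. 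Fortunately your second step supplies this once the labels are fixed: with respect to the positive-definite form, the adjoint $\partial^*$ of the $V^*$-homology differential is the Dunkl-side differential, so standard Hodge theory identifies $\ker\Delta\cap K_i$ with $H_i(V^*,L)$ on one hand and with the cohomology of $\partial^*$, namely $H^i(V,L)$, on the other (this is the space you mislabel as $H^i(V^*,L)$). With that identification the two halves combine to give both displayed formulas, modulo the scalar computation of $\Delta$ on the $F$-isotypic part of $L_a\otimes\Lambda^i V^*$, which you correctly isolate as the hard input and attribute to \cite{Ciu} and \cite{HuWo}, just as the paper does; the gradewise finite-dimensionality you mention (using the total grading preserved by $\Delta$) is indeed what legitimizes the Hodge argument, and you should also note that exactness of the induced Koszul complex uses that $H_c$ is free over $\CC[V^*]\rtimes W$ by the PBW theorem.
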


This uses the duality of $\OO_c$ together with the results of \cite{HuWo} and \cite{Ciu} on Dirac cohomomlogy; see Theorem 1.3 of \cite{Gri4}.

\section{The cyclotomic groups and their degenerate affine Hecke algebra} \label{cyclotomic}

\subsection{Summary} In this section we present a number of results for the groups $G(\ell,1,n)$ that are parallel to well-known facts for the symmetric group. To the extent that the proofs are also parallel they will be omitted.

\subsection{The group $G(\ell,1,n)$.} The group $G(\ell,1,n)$ consists of all $n$ by $n$ matrices with exactly one non-zero entry in each row and each column, and such that the non-zero entries are $\ell$th roots of $1$. It acts on $\CC^n$. As such, the set of reflections in $G(\ell,1,n)$ is
$$R=\{ \zeta_i^j \ | \ 1 \leq i \leq n \ 1 \leq j \leq \ell -1 \} \cup \{\zeta_i^k s_{ij} \zeta_i^{-k} \ | \ 1 \leq i < j \leq n, \ 0 \leq k \leq \ell-1 \},$$ where we write $\zeta$ for a fixed primitive $\ell$th root of $1$, the notation $\zeta_i$ stands for a diagonal matrix with $1$'s on the diagonal except in position $i$ where it has a $\zeta$, and $s_{ij}$ is the transposition interchanging $i$ and $j$.

 When $\ell$ is fixed and it will not cause confusion, we may write $G_n=G(\ell,1,n)$ for this group. In the same spirit we will use the abbreviations
$$\mathrm{Ind}^n_m=\mathrm{Ind}^{\CC G_n}_{\CC G_m}, \quad \mathrm{Res}^n_m=\mathrm{Res}^{\CC G_n}_{\CC G_m}, \quad \mathrm{Ind}^{m+n}_{m,n}=\mathrm{Ind}^{\CC G_{m+n}}_{\CC (G_m \times G_n)}, \quad \text{and} \quad \mathrm{Res}^{m+n}_{m,n}=\mathrm{Res}^{\CC G_{m+n}}_{\CC (G_m \times G_n)}$$ for the appropriate functors of induction and restriction.

\subsection{Jucys-Murphy-Young elements} Just as in the case of the symmetric group, we wish to study the groups $G_n$ in a family as $n$ varies. We have the following analog of Jucys-Murphy-Young elements: the $G_{j-1}$-orbit sum of $s_{j-1}=(j-1,j)$ is
$$\phi_j=\sum_{\substack{1 \leq i < j \\ 0 \leq k \leq \ell-1}} \zeta_i^k s_{ij} \zeta_i^{-k}=\sum_{\substack{1 \leq i < j \\ 0 \leq k \leq \ell-1}} \zeta_j^k s_{ij} \zeta_j^{-k}.$$ The element $\zeta_j$ commutes with $\phi_j$ and $G_{j-1}$. Modulo its center, the centralizer of $\CC G_{j-1}$ in $\CC G_j$ is generated by $\zeta_j$ and $\phi_j$ so that the branching rule is multiplicity free. Moreover, $\phi_j$ satisfies the relation
$$s_j \phi_j s_j=\sum_{\substack{1 \leq i < j \\ 0 \leq k \leq \ell-1}} \zeta_i^k s_{i,j+1} \zeta_i^{-k}=\phi_{j+1}-\sum_{ 0 \leq k \leq \ell-1} \zeta_j^k s_j \zeta_j^{-k},$$ so that
$$s_j \phi_j=\phi_{j+1} s_j- \sum_{0 \leq k \leq \ell-1} \zeta_j^k \zeta_{j+1}^{-k}.$$ 

\subsection{Intertwiners} Define the \emph{intertwining operators} $\tau_i$ by 
$$\tau_i=s_i+(\phi_i-\phi_{i+1})^{-1} \pi_i$$ for $1 \leq i \leq n-1$, where
$$\pi_i=\sum_{0 \leq k \leq \ell-1} \zeta_i^k \zeta_{i+1}^{-k}.$$ The operator $\tau_i$ is well-defined on each $\CC G(\ell,1,n)$-module, since $\phi_i-\phi_{i+1}$ is invertible on vectors $v$ with $\pi_i v \neq 0$. Moreover they satisfy braid relations and
$$\phi_j \tau_i=\tau_i \phi_{s_i(j)} \quad \hbox{for all $1 \leq i \leq n-1$ and $1 \leq j \leq n$.}$$ They may thus be used to create new eigenvectors for the operators $\phi_i$. 

\subsection{Young's semi-normal form} \label{seminormal} The material of this section is a straightforward adaptation of the standard story for the symmetric group case (see for instance chapter one of \cite{Kle}) to the groups $G(\ell,1,n)$.

The commuting elements $\phi_1,\dots,\phi_n,\zeta_1,\dots,\zeta_n$ act with one-dimensional common eigenspaces on each irreducible $\CC G_n$-module. The eigenvalues may be described as follows: for each $\ell$-partition $\lambda=(\lambda^0,\lambda^1,\dots,\lambda^{\ell-1})$ of $n$ there is an irreducible $\CC G(\ell,1,n)$-module $S^\lambda$ determined up to isomorphism by the property that it has a basis $v_T$ indexed by $T \in \mathrm{SYT}(\lambda)$ such that
$$\phi_i v_T=\ell \mathrm{ct}(T^{-1}(i)) v_T \quad \text{and} \quad \zeta_i v_T=\zeta^{\beta(T^{-1}(i))} v_T \quad \hbox{for all $1 \leq i \leq n$.}$$ We will normalize this basis in such a way that the structure constants for the $G(\ell,1,n)$-action are contained in the ring $\QQ[\zeta]$. To do this we write $T(\lambda)$ for the standard Young tableau defined by filling the boxes of $\lambda$ left to right and top to bottom in order with the numbers $1,2,\dots,n$, where we think of $\lambda^0$ as being to the left of $\lambda^1$ which is to the left of $\lambda^2$ and so on. Every other standard Young tableau is of the form $T=w T(\lambda)$ for a (unique) permutation $w \in S_n$, and we define the \emph{length} $\ell(T)$ of $T$ to be
$$\ell(T)=\ell(w) \quad \text{if} \quad T=w T(\lambda).$$ As in, for example, \cite{Kle}, for the case $\ell=1$, we now fix any eigenvector $v_{T(\lambda)}$ corresponding to $T(\lambda)$, and define
$$v_T=\pi_T(w v_{T(\lambda)}) \quad \hbox{if $T=w T(\lambda)$.}$$ Here $\pi_T$ is the projection on the eigenspace for $T$ whose kernel is the sum of the other eigenspaces.

With this choice of normalization we now have the formulas
$$\tau_i v_T=\begin{cases} 0 \quad \hbox{if $s_i T$ is not standard,} \\
v_{s_i T} \quad \hbox{if $s_i T$ is a standard Young tableau and $\ell(s_i T) > \ell(T)$, or if $\beta(T^{-1}(i)) \neq \beta(T^{-1}(i+1))$, and} \\ \frac{(\mathrm{ct}(T^{-1}(i))-\mathrm{ct}(T^{-1}(i+1)))^2-\ell^2}{(\mathrm{ct}(T^{-1}(i))-\mathrm{ct}(T^{-1}(i+1)))^2} v_{s_i T} \quad \text{else.} \end{cases}$$

These formulas imply:

$$s_i v_T=v_T \quad \hbox{if $i$ appears in the box to the left of $i+1$ in $T$,}$$
$$s_i v_T=-v_T \quad \hbox{if $i$ appears in the box above $i+1$ in $T$,}$$
$$s_i v_T=v_{s_i T} \quad \hbox{if $\beta(T^{-1}(i)) \neq \beta(T^{-1}(i+1))$,}$$ and defining $a=\mathrm{ct}(T^{-1}(i))-\mathrm{ct}(T^{-1}(i+1))$, if $s_i T$ is a standard Young tableau
$$s_i v_T=v_{s_i T}-a^{-1} \ell v_T \quad \hbox{if $\beta(T^{-1}(i))=\beta(T^{-1}(i+1))$ and $\ell(s_i T) > \ell(T)$,}$$ and 
$$s_i v_T=(1-a^{-2} \ell^2) v_{s_i T}+a^{-1} \ell v_T \quad \hbox{if $\beta(T^{-1}(i))=\beta(T^{-1}(i+1))$ and $\ell(s_i T) < \ell(T)$.}$$

\subsection{The degenerate affine Hecke algebra of type $G(\ell,1,n)$} The degenerate affine Hecke algebra of type $G(\ell,1,n)$ is the algebra $H_{\ell,n}$ generated by $\CC[u_1,\dots,u_n]$ and the group $G(\ell,1,n)$, subject to the relations
$$\zeta_i u_j=u_j \zeta_i \quad \hbox{for all $i,j$,} \quad s_i u_j=u_j s_i \quad \hbox{if $j \neq i,i+1$} \quad \text{and} \quad s_i u_i=u_{i+1} s_i-\sum_{0 \leq k \leq \ell-1} \zeta_i^k \zeta_{i+1}^{-k}.$$

\subsection{Automorphisms of $H_{\ell,n}$} There are two types of automorphisms of $H_{\ell,n}$ that we will use. First, we fix a complex number $\alpha$, and define the automorphism $t_\alpha$ of $H_{\ell,n}$ on generators by the rules
$$t_\alpha(u_i)=u_i+\alpha, \quad t_\alpha(\zeta_i)=\zeta_i, \quad \text{and} \quad t_\alpha(s_i)=s_i.$$

Second, there is an automorphism $\rho$ of $H_{\ell,n}$ given on generators by
$$\rho(u_i)=-u_{n-i+1}, \quad \rho(\zeta_i)=\zeta_{n-i+1}, \quad \text{and} \quad \rho(s_i)=s_{n-i}.$$ 

We note that both of these automorphisms preserve the group algebra $\CC G_n$, and in fact are inner automorphisms of it. Given an $H_{\ell.n}$-module $M$ and an automorphism $a$ of $H_{\ell,n}$ we write $M^a$ for the $H_{\ell,n}$-module which is equal to $M$ as an abelian group, and with $H_{\ell,n}$-action defined by
$$h \cdot m=a(h) m \quad \hbox{for $h \in H_{\ell,n}$ and $m \in M$.}$$ If $a=\rho$ or $a=t_\alpha$ then $M^a$ is isomorphic to $M$ as a $\CC G_n$-module.

\subsection{$H_{\ell,n}$ modules via branching for $G_n$} \label{branching}

Let $m$ be a non-negative  integer. There is a map $$H_{\ell,n} \longrightarrow \CC G_{m+n} \quad \text{given by} \quad u_i \mapsto \phi_{m+i} \quad s_j \mapsto s_{m+j} \quad  \text{and} \quad \zeta_i \mapsto \zeta_{m+i}$$ for $1 \leq i \leq n$ and $1 \leq j \leq n-1$. The image of this map is contained in the centralizer of $G_m$ in $\CC G_{m+n}$, so that $H_{\ell,n}$ acts on the module
$$S^{\lambda \setminus \mu}=\mathrm{Hom}_{G_m}(S^\mu, \mathrm{Res}^{m+n}_m(S^\lambda))$$ for all pairs $\lambda,\mu$ of $\ell$-partitions, where $\lambda$ is an $\ell$-partition of $m+n$ and $\mu$ is an $\ell$-partition of $m$. It follows from Young's orthogonal form (\ref{seminormal}) that $S^{\lambda \setminus \mu}=0$ unless $\mu \subseteq \lambda$, in which case $S^{\lambda \setminus \mu}$ has a basis indexed by the set of standard Young tableaux on the skew diagram $\lambda \setminus \mu$, which we now describe.

Given $T \in \mathrm{SYT}(\mu)$ and $U \in \mathrm{SYT}(\lambda \setminus \mu)$ we define $T \cup U \in \mathrm{SYT}(\lambda)$ by
$$T \cup U (b)=T(b) \quad \hbox{if $b \in \mu$ and} \quad T \cup U(b)=U(b)+m \quad \hbox{if $b \in \lambda \setminus \mu$.}$$ We now define $$\psi_U \in S^{\lambda \setminus \mu}=\mathrm{Hom}_{\CC G(r,1,m)}(S^\mu, \mathrm{Res}^{m+n}_m(S^\lambda))$$ by the formula
$$\psi_U(v_T)=v_{T \cup U}.$$ These $\psi_U$ are a basis of $S^{\lambda \setminus \mu}$,
$$S^{\lambda \setminus \mu}=\CC\{\psi_U \ | \ U \in \mathrm{SYT}(\lambda \setminus \mu) \}.$$

\subsection{$H_{\ell,n}$-modules from general skew shapes}

Suppose now that $D \subseteq \RR^2 \times \ZZ/\ell$ is a skew shape with connected components $D_1,\dots,D_k$. After diagonal slides, we may assume that each $D_i$ is such that for any $(x,y) \in D_i$, we have $y_i \in \ZZ$, and moreover the set of $y$-coordinates of distinct $D_i$'s are disjoint. We may choose (non-unique) $\alpha_1,\dots,\alpha_k \in \CC$ and integral skew shapes $\lambda_1 \setminus \mu_1,\dots, \lambda_k \setminus \mu_k$ such that
$$D_i=\lambda_i \setminus \mu_i+(\alpha_i,0),$$ so that their union is disjoint and a skew shape,
$$\lambda \setminus \mu=\coprod \lambda_i \setminus \mu_i,$$ and so that $\lambda_1 \setminus \mu_1,\dots, \lambda_k \setminus \mu_k$ are the connected components of their disjoint union $\lambda \setminus \mu$. We then \emph{define} 
$$S^D=\mathrm{Ind}( \otimes_i (S^{\lambda_i \setminus \mu_i} )^{t_{\alpha_i}}).$$ Up to isomorphism, this representation $S^D$ is independent of the choices made in its construction (this follows e.g. from Young's orthogonal form for skew shapes), and moreover its restriction to $G_n$ is isomorphic to $S^{\lambda \setminus \mu}$ (since the automorphisms $t_\alpha$ are the identity of $G_n$).

\subsection{Littlewood-Richardson numbers}
There are isomorphisms
\begin{align*} \mathrm{Hom}_{G_n}(S^\nu,\mathrm{Hom}_{G_m}(S^\mu, \mathrm{Res}^{m+n}_m(S^\lambda)))& \cong \mathrm{Hom}_{G_m \times G_n} (S^\mu \otimes S^\nu, \mathrm{Res}^{m+n}_{m,n} S^\lambda) \\ &\cong \mathrm{Hom}_{G_{m+n}}(\mathrm{Ind}^{m+n}_{m,n} (S^\mu \otimes S^\nu),S^\lambda),\end{align*} the first induced by $\otimes$-$\mathrm{Hom}$ adjunction and the second given by Frobenius reciprocity. We will write $$c^\lambda_{\mu \nu}=\mathrm{dim}_\CC(\mathrm{Hom}_{G_{m+n}}(\mathrm{Ind}^{m+n}_{m,n} (S^\mu \otimes S^\nu),S^\lambda)),$$ and refer to $c^\lambda_{\mu \nu}$ as a \emph{(cyclotomic) Littlewood-Richardson number}. Likewise, defining
$$c^{\lambda \setminus \mu}_\nu=\mathrm{dim}_\CC(\mathrm{Hom}_{G_n}(S^\nu,\mathrm{Hom}_{G_m}(S^\mu, \mathrm{Res}^{m+n}_m(S^\lambda))))$$ we have $c^{\lambda \setminus \mu}_\nu=c^\lambda_{\mu \nu}$. If now $D$ is any skew diagram, integral or not, such that $\lambda \setminus \mu$ may be obtained from $D$ by translating its connected components without merging any of them, then $S^D \cong S^{\lambda \setminus \mu}$ as $G_n$-modules, and hence defining
$$c^D_\nu=c^{\lambda \setminus \mu}_\nu$$ we have an isomorphism of $G_n$-modules:
$$S^D \cong \bigoplus_\nu (S^\nu)^{\oplus c^D_\nu}.$$

\subsection{Relationship with classical Littlewood-Richardson numbers}

Let $\lambda=(\lambda^0,\dots,\lambda^{\ell-1}), \mu=(\mu^0,\dots,\mu^{\ell-1})$, and $\nu=(\nu^0,\dots,\nu^{\ell-1})$ be $\ell$-partitions. The Littlewood-Richardson number $c^\nu_{\lambda \mu}$ may be expressed as a product
$$c^\nu_{\lambda \mu}=\prod_{j=0}^{\ell-1} c^{\nu_j}_{\lambda_j \mu_j}.$$ This follows from standard facts about induction and tensor products, and the realization of $S^\lambda$ as
$$S^\lambda=\mathrm{Ind}^{G_n}_{G_{m_0} \times G_{m_1} \times \cdots \times G_{m_{\ell-1}}} (S^{\lambda^0} \otimes S^{\lambda^1} \otimes \cdots \otimes S^{\lambda^{r-1}}),$$ where $m_i=|\lambda_i|$ and $G_{m_i}$ acts on the Specht module $S^{\lambda^i}$ for $S_{m_i}$ via the surjection $\CC G_{m_i} \to \CC S_{m_i}$ that is the identity on $S_{m_i}$ and sends $\zeta_j$ to $\zeta^i$ for all $j$.

\subsection{Tensor product with the exterior powers $\Lambda^i (V^*)$} The Littlewood-Richardson numbers enter into our calculations in a second way, via the calculation of the tensor products $S^\nu \otimes \Lambda^i (V^*)$ as $G_n$-modules, where as usual we write $V=\CC^n$ for the defining representation of $G_n$. If $\lambda=(\lambda^0,\lambda^1,\dots,\lambda^{\ell-1})$ is an $\ell$-partition of $n$, then we have
$$S^\lambda \otimes \Lambda^n(V^*)=S^{\lambda^t},$$ where we write $\lambda^t$ for the $\ell$-partition
$$\lambda^t=((\lambda^1)^t,(\lambda^2)^t,\dots,(\lambda^{\ell-1})^t,(\lambda^0)^t)$$ obtained from $\lambda$ by cycling its components one spot to the left and transposing them all.  When $n$ is clear from context, we will write simply $\mathrm{det}^{-1}$ for the one-dimensional character of $G_n$ acting on $\Lambda^n(V^*)$. 

In general, we observe that $\Lambda^i (V^*)$ contains the vector $v_{n-i+1} \wedge v_{n-i+2} \wedge \cdots \wedge v_n$, which is fixed by $G_{n-i}$ and transforms like $\mathrm{det}^{-1}$ under $G_i$ embedded in $G_n$ via the last $i$ coordinates, and is therefore induced from a one-dimensional representation
$$\Lambda^i(V^*)=\mathrm{Ind}^{G_n}_{G_{n-i} \times G_i} (1 \times \mathrm{det}^{-1}).$$ Computing the tensor product of this representation with $S^\nu$ may therefore be achieved as follows: by the projection formula we have
$$S^\nu \otimes \Lambda^i(V^*) \cong S^\nu \otimes \mathrm{Ind}^{G_n}_{G_{n-i} \times G_i} (1 \times \mathrm{det}^{-1}) \cong\mathrm{Ind}^{G_n}_{G_{n-i} \times G_i} (\mathrm{Res}^n_{n-i,i}(S^\nu) \otimes (1 \times \mathrm{det}^{-1}))$$ and hence
\begin{align*} \mathrm{Hom}_{G_n} &(S^\mu, S^\nu \otimes \Lambda^i(V^*)) \cong \mathrm{Hom}_{G_n} (S^\mu,\mathrm{Ind}^{n}_{n-i,i}(\mathrm{Res}^{n}_{n-i,i}(S^\nu) \otimes (1 \times \mathrm{det}^{-1}))) \\ & \cong \mathrm{Hom}_{G_{n-i,i}} (\mathrm{Res}^n_{n-i,i}(S^\mu),\mathrm{Res}^{n}_{n-i,i}(S^\nu) \otimes (1 \times \mathrm{det}^{-1}))). \end{align*} Taking dimensions gives
\begin{equation} \label{tensor formula}
\mathrm{dim}(  \mathrm{Hom}_{G_n} (S^\mu, S^\nu \otimes \Lambda^i(V^*)) )=\sum_{\substack{\eta \vdash n-i \\ \chi \vdash i}} c^\nu_{\eta \chi} c^\mu_{\eta \chi^t}.
\end{equation}

\subsection{Classification of irreducible $\mathfrak{u}$-diagonalizable $H_{\ell,n}$-modules}

\begin{lemma} \label{content sequence lemma}
Let $(a_1,\dots,a_n,\zeta^{b_1},\dots,\zeta^{b_n})$ be a sequence satisfying the property: if $i<j$ with $a_i=a_j$ and $b_i=b_j$ mod $\ell$, then there are $i<k,m<j$ with $b_k=b_m=b_i$ mod $\ell$ and $$a_k=a_i+\ell, \quad a_m=a_i-\ell.$$ Then there is a skew shape $D$ and a standard Young tableau $T$ of shape $D$ satisfying 
$$\ell \mathrm{ct}(T^{-1}(i))=a_i \quad \text{and} \quad \beta(T^{-1}(i))=b_i \quad \hbox{for $1 \leq i \leq n$.}$$ Moreover, $T$ and $D$ are unique up to diagonal slides of their connected components.
\end{lemma}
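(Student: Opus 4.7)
I would proceed by induction on $n$, constructing $D$ and $T$ one label at a time. The base case $n=0$ is trivial. For the inductive step, first observe that the hypothesis descends to the length $n-1$ prefix, so the induction hypothesis yields a skew shape $D_{n-1}$ with SYT labeling $T_{n-1}$ of the first $n-1$ indices, unique up to diagonal slides of its connected components. The task is then to adjoin a box labeled $n$ at some position $(x,y)$ with $x-y=a_n/\ell$ in component $b_n$, so that the resulting pair $(D_n,T_n)$ is again a skew shape with valid SYT, and to verify that this position is forced up to a diagonal slide of its newly extended or created connected component.

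A key preliminary observation is that a SYT on an $\ell$-skew shape is exactly an interleaved family of SYTs on its single-color sub-skew-shapes, and the hypothesis only constrains pairs of labels of the same color; so one may restrict attention to the sub-skew-shape of color $b=b_n$, which I denote $E\subseteq D_{n-1}$. Write $c=a_n/\ell$. If no box of $E$ has content $c$, then convexity together with SYT monotonicity force the position of box $n$ up to a diagonal slide of its connected component, depending on the arrangement of existing boxes of contents $c-1$ and $c+1$: such a box either fills a forced adjacent slot (for example, to the right of a suitable content-$(c-1)$ box, or below a suitable content-$(c+1)$ box inside an existing component), or else begins a fresh connected component whose position is free only along its diagonal. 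If some label $i<n$ in $E$ already has content $c$, the hypothesis supplies labels $i<j,k<n$ of color $b$ with contents $c+1$ and $c-1$; these buffer boxes, by the induction, complete the interior of a sub-rectangle of the connected component containing box $i$, and box $n$ is then forced to land at the unique remaining corner of that sub-rectangle, again up to an overall diagonal slide of the component.

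The main obstacle is the rigidity argument in the case where a same-content same-color box already exists: I must check that the buffer boxes produced by the hypothesis have, by induction, been placed within the connected component of the earlier content-$c$ box in positions that force a unique placement of box $n$, and that no alternative connected-component placement for box $n$ is compatible with the skew-shape convexity property. At bottom this reduces to the classical fact that two boxes on the same diagonal in a single connected component of a skew shape force, by convexity, a completely filled subrectangle between them, and that SYT monotonicity constrains the interior labels of such a subrectangle to appear in exactly the interleaved configuration encoded by the hypothesis. Uniqueness then follows directly from the inductive case analysis, since at each step the new box's location is pinned down up to a single diagonal slide of its component.
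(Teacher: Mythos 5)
Your plan follows essentially the same route as the paper: induct on $n$, apply the inductive hypothesis to the length-$(n-1)$ prefix, and use the hypothesis on the sequence to show that a box labeled $n$ of the prescribed color and content can be adjoined at a forced addable position (the paper's proof is exactly this, stated even more tersely and without spelling out uniqueness). Your case analysis is the right way to fill in the details--in the repeated-content case one should invoke the hypothesis with the \emph{latest} earlier same-content, same-color label, so that the two buffer entries are forced into the boxes immediately to the right of and below it and box $n$ must complete the resulting square--and the uniqueness-up-to-diagonal-slides bookkeeping goes through as you describe.
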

\begin{proof}
Induct on $n$. Thus by induction the sequence $(a_1,\dots,a_{n-1},\zeta^{b_1},\dots,\zeta^{b_{n-1}})$ possesses a standard Young tableau $T'$ on a skew diagram $D'$, and our hypothesis implies that this $D'$ possesses an  addable box $b$ with $\beta(b)=b_n$ mod $\ell$ and $\ell \mathrm{ct}(b)=a_n$. We obtain $T$ and $D$ by adjoining $b$ to $D'$ and defining $T(b)=n$. 
\end{proof}

We remark that the proof of the lemma amounts to an effective recursion for constructing $T$ and $D$. 

\begin{theorem} \label{diag class}
Let $M$ be an irreducible $\mathfrak{u}$-diagonalizable $H_{\ell,n}$-module and suppose $m \in M$ satisfies
$$u_i m=a_i m \quad \text{and} \quad \zeta_i m =\zeta^{b_i} m \quad \hbox{for $1 \leq i \leq n$.}$$ Then there is a standard Young tableau $T$ on a skew shape $D$ such that $a_i=\ell \mathrm{ct}(T^{-1}(i))$ and $b_i=\beta(T^{-1}(i))$ for $1 \leq i \leq n$ and $M \cong S^D$, and moreover $T$ and $D$ are unique up to diagonal slides of their connected components. 
\end{theorem}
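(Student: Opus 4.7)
The plan is to follow the Okounkov--Vershik strategy adapted to $H_{\ell,n}$, parallel to the $\ell=1$ story in \cite{Kle}. There are two tasks: first, verify that the eigenvalue sequence $(a_1,\dots,a_n;b_1,\dots,b_n)$ attached to $m$ satisfies the hypothesis of Lemma \ref{content sequence lemma}; second, construct an isomorphism $M\cong S^D$ for the resulting skew shape $D$.

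For the first task, I would introduce the intertwiner $\tau_i=s_i+(u_i-u_{i+1})^{-1}\pi_i$ in $H_{\ell,n}$ (well-defined on $u$-weight vectors where $u_i-u_{i+1}$ is invertible); its key properties $\tau_i u_j=u_{s_i(j)}\tau_i$ and $\tau_i \zeta_j=\zeta_{s_i(j)}\tau_i$ follow from the defining relations of $H_{\ell,n}$ as in the group-algebra case. Three cases govern how consecutive eigenvalues on $m$ interact: (a) if $a_i=a_{i+1}$ and $b_i\equiv b_{i+1}\pmod\ell$, the relation $s_i u_i=u_{i+1} s_i-\pi_i$ applied to $m$ forces $\pi_i m=0$, contradicting that $\pi_i$ acts by a non-zero scalar on the $(b_i,b_{i+1})$-eigenspace of $(\zeta_i,\zeta_{i+1})$; (b) if $a_{i+1}-a_i\notin\{0,\pm\ell\}$ or $b_i\not\equiv b_{i+1}\pmod\ell$, then $\tau_i m$ is a non-zero eigenvector for the sequence with the $i$-th and $(i+1)$-st entries swapped; (c) in the remaining case $a_{i+1}-a_i=\pm\ell$ with $b_i\equiv b_{i+1}\pmod\ell$, one finds $\tau_i m=0$. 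Given a putative collision $a_i=a_j$ and $b_i\equiv b_j\pmod\ell$ at minimal distance $j-i$, I would apply (b) to shrink $j-i$ whenever possible; the obstructions left by (c), together with irreducibility, manufacture exactly the intermediate indices $k,m$ with $a_k=a_i+\ell$ and $a_m=a_i-\ell$ required by the lemma. Lemma \ref{content sequence lemma} then supplies $(T,D)$, unique up to diagonal slides.

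For the second task, $M=H_{\ell,n}\cdot m$ by irreducibility, and by construction $S^D$ contains an eigenvector $v_T$ on which the $u_i$ and $\zeta_i$ act with the same eigenvalues as on $m$. Both $M$ and $S^D$ are irreducible $\mathfrak{u}$-diagonalizable $H_{\ell,n}$-modules. Using Young's seminormal formulas from Section \ref{seminormal} (which transfer to $H_{\ell,n}$ because the underlying relations are identical), one checks that the action of every $h\in H_{\ell,n}$ on an eigenvector is completely determined by the eigenvalue data; hence $m\mapsto v_T$ extends to a well-defined non-zero $H_{\ell,n}$-homomorphism $M\to S^D$, which is an isomorphism by Schur. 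Uniqueness of $(T,D)$ is inherited from Lemma \ref{content sequence lemma}.

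The hard part will be the case analysis inside the first task, specifically the blocked configurations in (c): because the intertwiner vanishes rather than producing a new eigenvector, the combinatorial information must be extracted from its \emph{failure} to move $m$. The $\ell>1$ setting compounds this, since one must track the $b_i$ modulo $\ell$ alongside the contents, so the bookkeeping for repeated residues is more delicate than in the symmetric group case treated in \cite{Kle}.
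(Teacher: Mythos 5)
Your proposal follows essentially the same route as the paper, which itself only sketches the argument by citing Kleshchev: one verifies via the intertwiner case analysis (as in \cite{Kle}, Cor.~2.2.4) that the eigenvalue sequence of $m$ satisfies the hypothesis of Lemma \ref{content sequence lemma}, and then transfers the $\mathfrak{u}$-module structure to conclude $M \cong S^D$ by the analogue of \cite{Kle}, Theorem 5.3.1. Your added detail on the blocked configurations in case (c) and on extending $m \mapsto v_T$ via the seminormal form is exactly where the cited arguments do the work, so the approaches coincide.
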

\begin{proof}
One checks that the sequence $(a_1,\dots,a_n,\zeta^{b_1},\dots,\zeta^{b_n})$ satisfies the hypothesis of the preceding lemma as in \cite{Kle} Cor. 2.2.4. This produces $T$ and $D$, and one proves that the $\mathfrak{u}$-module structure of $M$ is the same as that of $S^D$. The analog of Theorem 5.3.1 of \cite{Kle} then shows that $M \cong S^D$.  
\end{proof}

\section{Proof of the first main theorem}

\subsection{Parameters and the reflection representation of the group $G(\ell,1,n)$}  The group $W=G(\ell,1,n)$ acts on $V=\CC^n$ in the obvious way. We will write $\CC[V]=\CC[x_1,\dots,x_n]$ and $\CC[V^*]=\CC[y_1,\dots,y_n]$, where $y_1,\dots,y_n$ is the standard basis of $V=\CC^n$ with dual basis $x_1,\dots,x_n$ of $V^*$. The deformation parameter $c$ is a tuple $c=(c_0,d_0,\dots,d_{\ell-1})$ of real numbers with $d_0+d_1+\cdots+d_{\ell-1}=0$. We define $d_i$ for $i \in \ZZ$ by $d_i=d_j$ if $i=j$ mod $\ell$. 

\subsection{The $G(\ell,1,n)$ Cherednik algebra} Specializing $W=G(\ell,1,n)$, we will simply write $H_c$ for the cyclotomic rational Cherednik algebra. This is generated by two polynomial rings $\CC[x_1,\dots,x_n]$ and $\CC[x_1,\dots,x_n]$ and the group algebra $\CC W$, which acts by automorphisms on the two polynomial rings, subject to the relations
$$w x w^{-1} = w(x) \quad \text{and} \quad w y w^{-1}=w(y) \quad \hbox{for $w \in W$, $x \in V^*$, and $y \in V$,}$$
$$y_i x_i=x_i y_i+1-c_0 \sum_{\substack{ 1 \leq j \neq i \leq n \\ 0 \leq r \leq \ell-1}} \zeta_i^r s_{ij} \zeta_i^{-r}-\sum_{r=0}^{\ell-1} (d_r-d_{r-1}) e_{ir}$$ for $1 \leq i \leq n$, where
$$e_{ir}=\frac{1}{\ell} \sum_{t=0}^{\ell-1} \zeta^{-tr} \zeta_i^t$$ and
$$y_i x_j=x_j y_i+ c_0 \sum_{r=0}^{\ell-1} \zeta^{-r} \zeta_i^r s_{ij} \zeta_i ^{-r}$$ for $1 \leq i \neq j \leq n$.

\subsection{The Dunkl-Opdam subalgebra} We define 
$$z_i=y_i x_i+c_0 \phi_i,$$ where $\phi_i$ is the $i$th Jucys-Murphy-Young element for the group $G(\ell,1,n)$. Dunkl-Opdam \cite{DuOp} introduced these elements of $H_c$ in order to generalize the definition of non-symmetric Jack polynomials to the group $G(\ell,1,n)$, and proved that $z_i$ and $z_j$ commute for all $i$ and $j$. Moreover, they satisfy the relations
$$\zeta_i z_j=z_j \zeta_i \quad \hbox{for all $1 \leq i,j \leq n$,} \quad s_i z_j=z_j s_i \quad \hbox{for $j \neq i, i+1$, and} \quad  s_i z_i=z_{i+1} s_i-c_0 \sum_{0 \leq k \leq \ell-1} \zeta_j^k \zeta_{j+1}^{-k}.$$ The \emph{Dunkl-Opdam} subalgebra  of $H_c$ is the subalgebra $\ttt$ generated by $z_1,\dots,z_n$ and $\zeta_1,\dots,\zeta_n$.

In \cite{Gri2}, the second author computed the spectrum of this subalgebra $\ttt$ on each standard module $\Delta_c(\lambda)$, and defined the representation-valued version of the non-symmetric Jack polynomials to be the $\ttt$-eigenfunctions in $\Delta_c(\lambda)$. These play the central role in the classification \cite{Gri3} of the unitary representations in $\OO_c$.

\subsection{Intertwiners} In \cite{Gri} the second author introduced the following intertwining operators:
$$\sigma_i=s_i+c_0 (z_i-z_{i+1})^{-1} \pi_i \quad \hbox{for $1 \leq i \leq n-1$,}$$
$$\Phi=x_n s_{n-1} s_{n-2} \cdots s_1,$$ and
$$\Psi=y_1 s_1 s_2 \cdots s_{n-1}.$$ 

\subsection{$H_{\ell,n}$ as a subalgebra of $H_c$} The map determined by
$$u_i \mapsto \frac{1}{c_0} z_i \quad s_i \mapsto s_i \quad \zeta_i \mapsto \zeta_i$$ is an injection of $H_{\ell,n}$ into $H_c$. Via this injection, $u_i$ acts on $f_{P,Q}$ by 
$$u_i f_{P,Q}=\frac{1}{c_0}( Q(P^{-1}(i))+1-(d_{\beta(P^{-1}(i))}-d_{\beta(P^{-1}(i))-Q(P^{-1}(i))-1}))-\ell \mathrm{ct}(P^{-1}(i)).$$ We will use this formula to identify the isotype of the $H_{\ell,n}$-modules arising upon restricting $L_c(\lambda)$ to $H_{\ell,n}$. 

\subsection{The diagonalizable case}

We begin by proving the first part of Theorem \ref{main}, which is a consequence of the following more precise version. First, given a skew diagram $D$ we define a skew diagram $D^r$, the \emph{reverse} of $D$, as follows. Twisting $S^D$ by the automorphism $\rho$ of $H_{\ell,n}$ we obtain another $\mathfrak{u}$-diagonalizable module $(S^D)^\rho$, and $D^r$ is the skew diagram with $S^{D^r} \cong (S^D)^\rho$.  

\begin{theorem} \label{main version}
Let $L_c(\lambda)$ be a $\ttt$-diagonalizable $H_c$-module and let $d$ be a positive integer. Then as a $H_{\ell,n}$-module, the degree $c_\lambda+d$ part of $L_c(\lambda)$ is semisimple and isomorphic to the direct sum
$$L_c(\lambda)_{c_\lambda+d} \cong \bigoplus_{Q \in \mathrm{Tab}_c(\lambda), |Q|=d} S^{s_c(Q)^r}.$$ As a consequence of this, part (1) of Theorem \ref{main} holds.
\end{theorem}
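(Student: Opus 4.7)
The plan is to decompose the $\mathrm{eu}$-weight space $L_c(\lambda)_{c_\lambda+d}$ as an $H_{\ell,n}$-module by grouping the basis vectors $f_{P,Q}$ according to the value of $Q$, identify each summand as $S^{s_c(Q)^r}$ by invoking Theorem \ref{diag class}, and then restrict along the inclusion $\CC G_n \hookrightarrow H_{\ell,n}$ to obtain part (1) of Theorem \ref{main} using the Littlewood-Richardson decomposition of \ref{branching}.

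First I will use the description of the $\ttt$-eigenbasis of a $\ttt$-diagonalizable irreducible from \ref{sec:stanModBasis}: the space $L_c(\lambda)_{c_\lambda+d}$ has basis $\{f_{P,Q} : (P,Q)\in\Gamma_c(\lambda),\ |Q|=d\}$. For fixed $Q \in \mathrm{Tab}_c(\lambda)$ with $|Q|=d$, set $M_Q := \mathrm{span}\{f_{P,Q} : (P,Q)\in\Gamma_c(\lambda)\}$. I claim each $M_Q$ is $H_{\ell,n}$-stable. The generators $u_i = z_i/c_0$ and $\zeta_i$ act diagonally on this basis, so they preserve $M_Q$ automatically; for $s_i$, I solve the identity $\sigma_i = s_i + c_0(z_i-z_{i+1})^{-1}\pi_i$ for $s_i$. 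Since $\sigma_i f_{P,Q}$ is either a scalar multiple of $f_{s_iP,Q}$ (when $(s_iP,Q)\in\Gamma_c(\lambda)$) or zero, and $\pi_i$ lies in the subalgebra generated by $\zeta_i,\zeta_{i+1}$, this gives $s_i f_{P,Q}\in M_Q$.

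Next I will apply Theorem \ref{diag class} to each $M_Q$: using the formula for $u_i f_{P,Q}$ from \ref{sec:stanMod} together with the evident formula $\zeta_i f_{P,Q} = \zeta^{\beta(P^{-1}(i)) - Q(P^{-1}(i))} f_{P,Q}$, I read off the sequence $(a_i,\zeta^{b_i})_{i=1}^n$ of joint eigenvalues on an arbitrary $f_{P,Q}\in M_Q$. The verification that this sequence satisfies the hypothesis of Lemma \ref{content sequence lemma} is exactly what conditions (c) and (d) in the definition of $\Gamma_c(\lambda)$ were designed to guarantee; unwinding those admissibility conditions in terms of the eigenvalue formula translates them into the required statement about coincident pairs $(a_i,b_i)=(a_j,b_j)$. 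Theorem \ref{diag class} then produces a unique-up-to-diagonal-slides skew shape $D$ and a standard tableau $T$ on $D$ realizing this eigenvalue sequence, and it identifies $M_Q \cong S^D$. Comparing the eigenvalue formula with the formula for the content of $T$ used to define $s_c(Q)$, one sees that $D = s_c(Q)^r$: the index reversal $i\mapsto n-i+1$ in the definition of $s_c(Q)$ reflects precisely the twist by the automorphism $\rho$, which turns the negative sign in front of $\mathrm{ct}(P^{-1}(i))$ in the $u_i$-eigenvalue formula into the positive sign appearing in the standard $H_{\ell,n}$-action on $S^{s_c(Q)}$. This computation is the ``first line of the proof'' referenced in the definition of $s_c(Q)$.

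Summing over all $Q$ with $|Q|=d$ then yields the claimed $H_{\ell,n}$-isomorphism. For part (1) of Theorem \ref{main}, I restrict from $H_{\ell,n}$ to $G_n$: the automorphism $\rho$ of $H_{\ell,n}$ restricts to an inner automorphism of $\CC G_n$ (conjugation by the longest element interchanging $s_i\leftrightarrow s_{n-i}$ and $\zeta_i\leftrightarrow\zeta_{n-i+1}$), so $S^{s_c(Q)^r}|_{G_n} \cong S^{s_c(Q)}|_{G_n}$. The cyclotomic Littlewood-Richardson decomposition of \ref{branching} gives $S^{s_c(Q)}|_{G_n} \cong \bigoplus_\mu (S^\mu)^{\oplus c^{s_c(Q)}_\mu}$, and taking graded characters and summing over $d$ reproduces the formula of part (1). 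The main obstacle will be the combinatorial verification that the sequence $(a_i,\zeta^{b_i})$ meets the hypothesis of Lemma \ref{content sequence lemma}: although conditions (c) and (d) in the definition of $\Gamma_c(\lambda)$ are engineered for exactly this purpose, translating them through the slightly awkward shifts in the $d_\bullet$-indices appearing in the $u_i f_{P,Q}$ formula demands careful bookkeeping. The matching of indices under the reversal $i\mapsto n-i+1$ relating $D$ with $s_c(Q)^r$ is a second, smaller obstacle.
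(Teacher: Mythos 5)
Your overall strategy (group the eigenbasis by $Q$, identify each block via Theorem \ref{diag class}, then restrict to $G_n$ and use the Littlewood--Richardson decomposition) is the same as the paper's, but there is a genuine gap at the central step: Theorem \ref{diag class} is a classification of \emph{irreducible} $\mathfrak{u}$-diagonalizable $H_{\ell,n}$-modules, and you apply it to $M_Q$ having only shown that $M_Q$ is $H_{\ell,n}$-stable. Stability plus knowledge of the joint $\mathfrak u$-spectrum does not by itself let you conclude $M_Q \cong S^{s_c(Q)^r}$, nor does it give the semisimplicity asserted in the theorem: a priori $M_Q$ could decompose into, or be glued from, several smaller calibrated modules (this would happen, for instance, if too many of the intertwiner coefficients $\sigma_i f_{P,Q}$ vanished, disconnecting the set of $P$'s). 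The missing ingredient is precisely what the paper proves first: that $L_Q$ is \emph{irreducible}. The paper gets this from Lemma 7.4 of \cite{Gri2}, which says that any two $P,P'$ with $(P,Q),(P',Q)\in\Gamma_c(\lambda)$ are joined by a chain of simple transpositions $s_{i_1},\dots,s_{i_p}$ with $P'=s_{i_1}\cdots s_{i_p}P$ and all intermediate pairs $(s_{i_j}\cdots s_{i_p}P,Q)$ again in $\Gamma_c(\lambda)$; hence the $H_{\ell,n}$-submodule generated by any single $f_{P,Q}$ is all of $L_Q$, and since every nonzero submodule of a $\ttt$-diagonalizable module contains a weight vector, $L_Q$ is irreducible. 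Only then does Theorem \ref{diag class} apply and yield $L_Q\cong S^D$ with $D=s_c(Q)^r$.

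Your remaining steps are consistent with the paper: the comparison of the $u_i$-eigenvalue formula with the content formula defining $s_c(Q)$, the role of the reversal $i\mapsto n-i+1$ and the twist by $\rho$ in producing $s_c(Q)^r$, and the restriction to $\CC G_n$ (where $\rho$ is inner, so $S^{s_c(Q)^r}$ and $S^{s_c(Q)}$ agree as $G_n$-modules) followed by the decomposition $S^{s_c(Q)}\cong\bigoplus_\mu (S^\mu)^{\oplus c^{s_c(Q)}_\mu}$ are exactly how the paper deduces part (1) of Theorem \ref{main}. To repair the proposal, replace the stability argument for $M_Q$ by (or supplement it with) the connectivity statement of Lemma 7.4 of \cite{Gri2} and the weight-vector argument, so that irreducibility of each $M_Q$ is in hand before Theorem \ref{diag class} is invoked.
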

\begin{proof}
We begin by proving that for $Q \in \mathrm{Tab}_c(\lambda)$ fixed, the span
$$L_Q=\CC\{f_{P,Q} \ | \ (P,Q) \in \Gamma_c(\lambda) \}$$ is an irreducible $H_{\ell,n}$-module. This granted, there is a (unique up to diagonal slides of connected components) skew diagram $D$ and a standard Young tableau $T$  on $D$ with
$$ \mathrm{ct}(T^{-1}(i))=\frac{1}{\ell c_0}( Q(P^{-1}(i))+1-(d_{\beta(P^{-1}(i))}-d_{\beta(P^{-1}(i))-Q(P^{-1}(i))-1}))-\mathrm{ct}(P^{-1}(i)).$$ It follows from this, the definition of $s_c(Q)$, and Theorem \ref{diag class} that $L_Q$ is isomorphic to $S^{s_c(Q)^r}$ as an $H_{\ell,n}$-module, which proves the result.

To establish irreducibility we use Lemma 7.4 of \cite{Gri2}. This lemma, translated into the notation we use here, shows that given $P,P'$ with $(P,Q), (P',Q) \in \Gamma_c(\lambda)$, there is a sequence of simple transpositions $s_{i_1},\dots,s_{i_p}$ such that $P'=s_{i_1} \cdots s_{i_p} P$ and $(s_{i_j} \cdots s_{i_p} P,Q) \in \Gamma_c(\lambda)$ for all $1 \leq j \leq p$. This in turn shows that the $H_{\ell,n}$-submodule of $L_c(\lambda)$ generated by $f_{P,Q}$ is $L_Q$; together with the fact that any $H_{\ell,n}$-submodule of $L_Q$ must contain some weight vector this finishes the proof.
\end{proof}

The following corollary proves the first part of Theorem \ref{main}.

\begin{corollary}
Suppose $L_c(\lambda)$ is $\ttt$-diagonalizable. As a $\CC G_n$-module, the degree $c_\lambda+d$ part of $L_c(\lambda)$ is
$$L_c(\lambda)_{c_\lambda+d} \cong \bigoplus_{\substack{Q \in \mathrm{Tab}_c(\lambda), \ |Q|=d \\ \mu \in P_{\ell,n}}} (S^\mu)^{\oplus c^{s_c(Q)}_\mu}.$$
\end{corollary}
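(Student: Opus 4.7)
The plan is to derive this corollary directly from Theorem \ref{main version} by restricting the $H_{\ell,n}$-module decomposition to the subalgebra $\CC G_n \subseteq H_{\ell,n}$. There is essentially no new content beyond Theorem \ref{main version} together with two facts already developed earlier in the excerpt: the $\CC G_n$-decomposition of $S^D$ via cyclotomic Littlewood-Richardson numbers, and the fact that the automorphism $\rho$ of $H_{\ell,n}$ restricts to an inner automorphism of $\CC G_n$.

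Concretely, first I would invoke Theorem \ref{main version} to obtain the isomorphism of $H_{\ell,n}$-modules
$$L_c(\lambda)_{c_\lambda+d} \;\cong\; \bigoplus_{\substack{Q \in \mathrm{Tab}_c(\lambda)\\ |Q|=d}} S^{s_c(Q)^r}.$$
Next I would observe that the reversal $D \mapsto D^r$ is harmless upon restriction to $\CC G_n$: by the definition of $D^r$ we have $S^{D^r} \cong (S^D)^\rho$ as $H_{\ell,n}$-modules, but the paper has already noted that $\rho$ preserves $\CC G_n \subseteq H_{\ell,n}$ and acts on it by an inner automorphism, so twisting by $\rho$ does not change the isomorphism class of the underlying $\CC G_n$-module. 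Therefore $S^{s_c(Q)^r} \cong S^{s_c(Q)}$ as $\CC G_n$-modules for each $Q$.

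Finally I would apply the decomposition
$$S^D \;\cong\; \bigoplus_{\mu \in P_{\ell,n}} (S^\mu)^{\oplus c^D_\mu}$$
established in the Littlewood--Richardson subsection (which in turn is a direct consequence of Frobenius reciprocity and the definition of $c^D_\mu$ via induction from connected components). Applying this with $D = s_c(Q)$ and summing over $Q$ yields
$$L_c(\lambda)_{c_\lambda+d} \;\cong\; \bigoplus_{\substack{Q \in \mathrm{Tab}_c(\lambda),\ |Q|=d\\ \mu \in P_{\ell,n}}} (S^\mu)^{\oplus c^{s_c(Q)}_\mu},$$
which is exactly the claim of the corollary. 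Part (1) of Theorem \ref{main} follows by multiplying each summand by $t^{c_\lambda+d}$ and summing over $d$; the leading power $t^{c_\lambda}$ is absorbed into the convention for $\mathrm{ch}(L_c(\lambda))$.

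There is no genuine obstacle here: all the real work has been done in Theorem \ref{main version} and in the verification that $S^D$ depends, as a $\CC G_n$-module, only on the connected components of $D$ up to translation. The single point worth double-checking carefully is the innerness of $\rho$ on $\CC G_n$, since this is what licenses passing from $S^{s_c(Q)^r}$ to $S^{s_c(Q)}$ without altering the count of $S^\mu$-isotypic components.
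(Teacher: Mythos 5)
Your proposal is correct and follows essentially the same route as the paper: invoke Theorem \ref{main version}, note that twisting by $\rho$ (which is inner on $\CC G_n$) identifies $S^{s_c(Q)^r}$ with $S^{s_c(Q)}$ as $\CC G_n$-modules, and then apply the decomposition $S^D \cong \bigoplus_\mu (S^\mu)^{\oplus c^D_\mu}$ from the Littlewood--Richardson subsection. The paper's proof is just a more compressed version of exactly this argument.
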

\begin{proof}
Twisting the representation $S^{s_c(Q)^r}$ by the automorphism $\rho$ of $H_{\ell,n}$ shows that as a $\CC G_n$-module, $L_Q$ is isomorphic to $S^{s_c(Q)}$, which proves the corollary.
\end{proof}

Finally we deduce the second part of Theorem \ref{main}.

\begin{corollary}
Suppose $L_c(\lambda)$ is unitary. Let $i$ be a non-negative integer and let $\mu$ be an $\ell$-partition of $n$. Then
$$\mathrm{dim}(\mathrm{Ext}^i(\Delta_c(\mu),L_c(\lambda))=\sum c^{s_c(Q)}_\nu c^\nu_{\eta \chi} c^\mu_{\eta \chi^t}.$$
\end{corollary}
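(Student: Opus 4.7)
The argument is a direct assembly of results already in hand. First, by the second formula of Theorem~\ref{hodge decomposition},
$$\mathrm{dim}\,\mathrm{Ext}^i_{\OO_c}(\Delta_c(\mu), L_c(\lambda)) = \mathrm{dim}\,\mathrm{Hom}_{\CC W}(\mu, H^i(V^*, L_c(\lambda))).$$
For the unitary module $L = L_c(\lambda)$, the positive definite contravariant form provides a graded isomorphism $L \cong DL$ of $H_c$-modules, which, combined with the self-duality of the complex irreducibles of $G(\ell,1,n)$, identifies the $\mu$-multiplicities in $H^i(V^*, L)$ and in $H_i(V^*, L)$. The first formula of Theorem~\ref{hodge decomposition} then picks out a single graded piece: the $\mu$-isotype of $H_i(V^*, L)$ equals the $\mu$-isotype of $L_a \otimes \Lambda^i V^*$ for the unique $a$ with $c_\mu = a + i$. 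Since $L_c(\lambda)_a = 0$ unless $a = c_\lambda + d$ with $d \in \ZZ_{\geq 0}$, this pins down $d = c_\mu - c_\lambda - i$, which after the conversion from the $c$-function of \ref{c function} to the charged content statistic becomes $d = \mathrm{ct}_c(\lambda) - \mathrm{ct}_c(\mu) - i$, exactly the range over which $|Q|$ runs in the statement.

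Second, the first corollary of Theorem~\ref{main version} provides the $\CC W$-decomposition
$$L_c(\lambda)_{c_\lambda+d} \cong \bigoplus_{\substack{Q \in \mathrm{Tab}_c(\lambda),\, |Q|=d \\ \nu \in P_{\ell,n}}} (S^\nu)^{\oplus c^{s_c(Q)}_\nu}.$$
Tensoring with $\Lambda^i V^*$ and applying $\mathrm{Hom}_{\CC W}(\mu, -)$ distributes over the direct sum, so the calculation reduces to evaluating $\mathrm{dim}\,\mathrm{Hom}_{\CC W}(\mu, S^\nu \otimes \Lambda^i V^*)$ for each $\nu$; by formula~\eqref{tensor formula} this is exactly $\sum_{\eta, \chi} c^\nu_{\eta \chi} c^\mu_{\eta \chi^t}$, with $\eta \in P_{\ell,n-i}$ and $\chi \in P_{\ell,i}$.

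Assembling these two ingredients gives the quadruple sum in the statement. The only genuinely delicate step is the first one: reconciling the $H^i$ and $H_i$ formulations of Theorem~\ref{hodge decomposition} using unitarity of $L_c(\lambda)$, and pinning down the sign convention relating the eu-weight $c_\lambda$ to the charged content $\mathrm{ct}_c(\lambda)$ so that the degree condition produced by the Hodge decomposition matches the condition $|Q| = \mathrm{ct}_c(\lambda) - \mathrm{ct}_c(\mu) - i$ appearing in the theorem. Once these conventions are settled, the remaining steps are a routine composition of part~(1) of Theorem~\ref{main} and the tensor product identity~\eqref{tensor formula} already established in the paper.
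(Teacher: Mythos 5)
Your proposal is correct and follows essentially the same route as the paper, whose proof is exactly the one-line combination of Theorem~\ref{hodge decomposition}, formula~\eqref{tensor formula}, and the preceding corollary giving the $\CC G_n$-decomposition of $L_c(\lambda)_{c_\lambda+d}$; your extra bookkeeping (locating the degree $d=\mathrm{ct}_c(\lambda)-\mathrm{ct}_c(\mu)-i$ and reconciling $H^i$ with $H_i$ via unitarity and the conjugate-linear duality $D$) just makes explicit what the paper leaves implicit, with the one caveat that the ``self-duality'' you invoke should be understood for the conjugate-linear dual, since the ordinary linear duals of the irreducibles of $G(\ell,1,n)$ are not all self-dual for $\ell\geq 3$.
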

\begin{proof}
Since $L_c(\lambda)$ is unitary we may apply Theorem \ref{hodge decomposition} and \eqref{tensor formula} in combination with the preceding corollary. The formula follows.
\end{proof}

%\sftodo{Should we change the name of this section?}
\section{Maps between standardard modules} \label{BGG}

In this section, we define maps between certain standard modules. The
graded poset $\Pnk$ is essential for our
definition.
\subsection{The poset $\Pnk$}\label{subsec:poset}
\subsubsection{Partitions} We identify an integer partition
$\lambda=(\lambda_1,\ldots,\lambda_r)$, where
$\lambda_1\geq\lambda_2\dots\geq\lambda_r>0$, with its Young
diagram. \omitt{Its Young diagram is a left-justified array of
boxes/cells, with $\lambda_i$ boxes in the $i^{\text{th}}$ row from
the top.} We will often refer to cells $B$ in the diagram of $\lambda$
and simply write $B\in\lambda$. A {\em tableau} $T$ of shape $\lambda$
is a diagram of $\lambda$ where the cells have nonnegative integer
entries; $T_{ij}$ is the entry in row $i$, column $j$. A {\em
violation} in $T$ is either a pair of entries $T_{ij}$ and $T_{i,j+1}$
such that $T_{ij} > T_{i,j+1}$ (row violation) or pair of entries
$T_{ij}$ and $T_{i+1,j}$ such that $T_{ij} > T_{i+1,j}$ (column
violation). If $\lambda$ is a partition of $n$ and $T$ is a tableau
with no violations and entries $1,2,\ldots,n$, then $T$ is a {\em
standard Young tableau of shape $\lambda$}, written
$T\in\syt(\lambda)$. The {\em content} of the cell $B$ in row $i$ and
column $j$ is $j-i$ and is written $\ctBox{B}$. For $T\in\syt(\lambda)$,
$\ct{h}{T}$ is the content of the cell in $T$ containing $h$. As
usual, when $\gamma$ and $\lambda$ are both partitions, $\gMinusl$ is
the collection of cells in $\gamma$ which are not in $\lambda$ and
$\gamma\cap\lambda$ are the cells in both $\gamma$ and
$\lambda$.

A cell $(x,y)$ in $\lambda$ is in region $\reg$ with residue $r$ if
$y-x=k\reg +r$, where $0\leq r<k$. In general, the region and residue
of a cell $B$ will be denoted by $\reg_B$ and $r_B$ respectively. A
region which we number $\reg$ would be numbered $\reg-1$
in \cite{GKS}.  Additionally, we define the diagonal $D_B$ of a cell
$B$: $$D_B=\{C\in\lambda|\ctBox{C}=\ctBox{B}\}.$$

\subsubsection{Elements of the poset}
\begin{definition}\label{def:abacus}
 The {\em $k$-abacus diagram} of a partition $\lambda$ whose first
 part $\lambda_1$ is strictly less than $k$ is as in Definition 4.1 of
 \cite{BGS}, which we reproduce here. Given a partition $\lambda$
 whose first part is less than $k$, place its Young
 diagram in the fourth quadrant of the plane. Its upper left corner
 should be at (0,0). The $k$-abacus has $k$ runners, labeled
 $0,1,\ldots,k-1$ from the left, and beads and spaces are in rows on
 the abacus: row 0 at the bottom, row 1 above it, and so on. Position
 $m$ is in row $i$, runner $j$ if $m=ik+j$, where $0\leq j<k$.

We place beads on the runners by tracing the silhouette of the diagram
of $\lambda$. Start tracing at $(k-1,0)$ and continue until reaching
the $y$-axis, producing a series of ``down steps'' and ``left steps.''
We now read across the positions in the abacus from left to right and
bottom to top, leaving an empty space for each down step, and a bead
for each left step. There is a bead in position $m$ of the abacus if
and only if the $m^{\text{th}}$ step is a left step. Since there are
$k$ runners and $k-1$ left steps, there is a runner with no beads on
it.
\end{definition}

\begin{figure}[h]
\begin{tikzpicture}
  \begin{scope}
\def\s{.75}
\def\xa{-1.2}
\def\xb{5.1}
\def\ya{-5.1}
\def\yb{1.2}
\draw[step=\s cm, gray, very thin] (\xa*\s,\ya*\s) grid (\xb*\s,\yb*\s);
\draw[thick] (\xa*\s,0)--(\xb*\s,0);
\draw[thick] (0,\ya*\s)--(0,\yb*\s);
\draw[ultra thick,blue] (4*\s,0)--(4*\s,-3*\s)--(3*\s,-3*\s)--(3*\s,-4*\s)--(0,-4*\s)--(0,\ya*\s);
\end{scope}

\begin{scope}[scale=.3,shift={(17,-9)}]
\def\s{1}
\draw[shift={(0,0)},thick,blue] (0,0) to (0, 10);
\draw[shift={(3,0)},thick,blue] (0,0) to (0, 10);
\draw[shift={(6,0)},thick,blue] (0,0) to (0, 10);
\draw[shift={(9,0)},thick,blue] (0,0) to (0, 10);
\draw[shift={(12,0)},thick,blue] (0,0) to (0, 10);

\node[thick,black] at (0,0) {x};
\node[thick,black] at (3,0) {x};
\node[thick,black] at (6,0) {x};
\shade[shading=ball, ball color = red] (9,0) circle (1) node {};
\node[thick,black] at (12,0) {x};

\shade[shading=ball, ball color = red] (0,2) circle (1) node {};
\shade[shading=ball, ball color = red] (3,2) circle (1) node {};
\shade[shading=ball, ball color = red] (6,2) circle (1) node {};
\node[thick,black] at (9,2) {x};
\node[thick,black] at (12,2) {x};

\node[thick,black] at (0,4) {x};
\node[thick,black] at (3,4) {x};
\node[thick,black] at (6,4) {x};
\node[thick,black] at (9,4) {x};
\node[thick,black] at (12,4) {x};

\node[thick,black] at (0,6) {x};
\node[thick,black] at (3,6) {x};
\node[thick,black] at (6,6) {x};
\node[thick,black] at (9,6) {x};
\node[thick,black] at (12,6) {x};

\node[thick,black] at (0,8) {x};
\node[thick,black] at (3,8) {x};
\node[thick,black] at (6,8) {x};
\node[thick,black] at (9,8) {x};
\node[thick,black] at (12,8) {x};
\node[thick,black] at (0,10) {x};
\end{scope}

\end{tikzpicture}

\caption{The partition $\bgspart{15}{5}=4^33$ and its abacus.}
\end{figure}

For positive integers $n>k>1$, let $q$ and $r$ be given by
$n=q(k-1)+r$ and $0\leq r<k-1$. Then define the partition
$\bgspart{n}{k}$ as $(k-1,\ldots,k-1,r)$, where there are $q$ parts of
size $k-1$. This is the ``initial'' partition of the set of vertices
of our poset. We begin with the $k$-abacus of $\bgspart{n}{k}$. Let
$\bgsset$ be the set of all partitions obtained as a composition of
abacus moves of the following type. We may move any bead down a few rows
as long as we move a bead in another column up the same number of rows. This set is
defined in \cite{BGS} in Notation 4.4, where here
$\lambda=\bgspart{n}{k}$. The abacus for the partition
$\bgspart{n}{k}$ will have at most one bead per runner, so that all
the partitions in $\bgsset$ will also have at most one bead per runner.

\subsubsection{Partial order}
\label{sec:partialOrder}
To describe the partial order on $\bgsset$, write the abacus of a
partition in $\bgsset$ as $(a_0,\ldots,a_{k-1})$, where $a_i$ is the
number of empty spaces below the bead in runner $i$ if runner $i$ has
a bead and $-\infty$ if there is no bead on runner $i$; equivalently, $a_i$ is the row of the bead in row $i$.

\begin{definition}
\label{def:extAba}
  Let $\lambda\in\bgsset$ have abacus $(a_0,\ldots,a_{k-1})$. Extend
  the abacus to have index set $\Z$ by setting $a_{i-k}=a_i+1$ for $a_i\neq
  -\infty$ and call this the {\em extended $k$-abacus} of $\lambda$.
  \end{definition}

  We put an order on $\bgsset$ and call the resulting poset $\Pnk$.
\begin{definition}[Cover relation for $\Pnk$]
\label{def:cover}
  Suppose we have $i$ and $j$ such that 
\begin{enumerate}

\item $j-k<i<j$ 
\item $i\bmod k\neq j\bmod k$ 
\item $a_i>a_j\neq -\infty$ in the extended $k$-abacus of $\lambda$.
\item There is no $h$ such that $i<h<j$ and $a_j\leq
  a_h\leq a_i$. 
\end{enumerate}

Then $\lambda$ is covered by the partition $\gamma$
  with extended
  $k$-abacus $$\ldots,a_{i-1},a_j,a_{i+1},\ldots,a_{j-1},a_i,a_{j+1},\ldots.$$
  The $k$-abacus for $\gamma$ is the same as the $k$-abacus for
  $\lambda$, except that $a_i$ and $a_j$ have been interchanged. We may always assume that $0\leq i <k$. 
   
  \end{definition}
%\sftodo{example}
In Figure~\ref{fig:twoParts}, there is an example of partitions $\gamma\gtrdot\lambda$ and in Figure~\ref{fig:twoAbacus}, their corresponding abacuses. The poset $\mc{P}(15,5)$ is in Figure~\ref{fig:poset}.

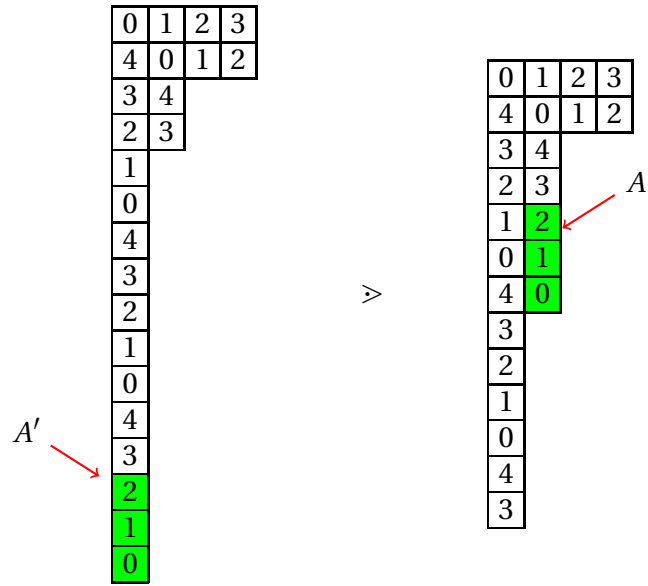
\begin{figure}[h]
 \ytableausetup{boxsize=1.25em}

 \begin{tikzpicture}
 \node (A1) at (-2.1, -1.8){$A'$};
\node (B1) at (-1., -2.5){};
\draw [thick, red,->](A1)--(B1);  
\node at (0,0){\begin{ytableau}
0&1&2&3\\
4&0&1&2\\
3&4\\
2&3\\
1\\
0\\
4\\
3\\
2\\
1\\
0\\
4\\
3\\
*(green)2\\
*(green)1\\
*(green)0
\end{ytableau}

};

\node at (2.5,0){$\gtrdot$};
\node (A) at (6, 1.5){$A$};
\node (B) at (4.9, .8){};
\draw [thick, red,->](A)--(B);
\node at (5,0){\begin{ytableau}
0&1&2&3\\
4&0&1&2\\
3&4\\
2&3\\
1&*(green)2\\
0&*(green)1\\
4&*(green)0\\
3\\
2\\
1\\
0\\
4\\
3
\end{ytableau}
};

\end{tikzpicture}

\caption{A cover relation in the poset $\mc{P}(24,5)$ ($n=24$, $k=5$). The partition $\gamma=4^22^21^{12}$ covers  $\lambda=4^22^51^6$} 
\label{fig:twoParts}
\end{figure}

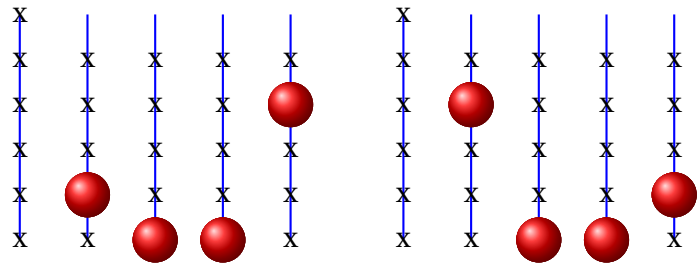
\begin{figure}[h]
\begin{tikzpicture}
  \begin{scope}[scale=.3]
\def\s{1}
\draw[shift={(0,0)},thick,blue] (0,0) to (0, 10);
\draw[shift={(3,0)},thick,blue] (0,0) to (0, 10);
\draw[shift={(6,0)},thick,blue] (0,0) to (0, 10);
\draw[shift={(9,0)},thick,blue] (0,0) to (0, 10);
\draw[shift={(12,0)},thick,blue] (0,0) to (0, 10);
\node[thick,black] at (0,0) {x};
\node[thick,black] at (3,0) {x};
\shade[shading=ball, ball color = red] (6,0) circle (1) node {};
\shade[shading=ball, ball color = red] (9,0) circle (1) node {};
\node[thick,black] at (12,0) {x};
\node[thick,black] at (0,2) {x};
\shade[shading=ball, ball color = red] (3,2) circle (1) node {};
\node[thick,black] at (6,2) {x};
\node[thick,black] at (9,2) {x};
\node[thick,black] at (12,2) {x};
\node[thick,black] at (0,4) {x};
\node[thick,black] at (3,4) {x};
\node[thick,black] at (6,4) {x};
\node[thick,black] at (9,4) {x};
\node[thick,black] at (12,4) {x};
\node[thick,black] at (0,6) {x};
\node[thick,black] at (3,6) {x};
\node[thick,black] at (6,6) {x};
\node[thick,black] at (9,6) {x};
\shade[shading=ball, ball color = red] (12,6) circle (1) node {};
\node[thick,black] at (0,8) {x};
\node[thick,black] at (3,8) {x};
\node[thick,black] at (6,8) {x};
\node[thick,black] at (9,8) {x};
\node[thick,black] at (12,8) {x};
\node[thick,black] at (0,10) {x};
\end{scope}

\begin{scope}[scale=.3,shift={(17,0)}]
\def\s{1}
\draw[shift={(0,0)},thick,blue] (0,0) to (0, 10);
\draw[shift={(3,0)},thick,blue] (0,0) to (0, 10);
\draw[shift={(6,0)},thick,blue] (0,0) to (0, 10);
\draw[shift={(9,0)},thick,blue] (0,0) to (0, 10);
\draw[shift={(12,0)},thick,blue] (0,0) to (0, 10);
\node[thick,black] at (0,0) {x};
\node[thick,black] at (3,0) {x};
\shade[shading=ball, ball color = red] (6,0) circle (1) node {};
\shade[shading=ball, ball color = red] (9,0) circle (1) node {};
\node[thick,black] at (12,0) {x};
\node[thick,black] at (0,2) {x};
\node[thick,black] at (3,2) {x};
\node[thick,black] at (6,2) {x};
\node[thick,black] at (9,2) {x};
\shade[shading=ball, ball color = red] (12,2) circle (1) node {};
\node[thick,black] at (0,4) {x};
\node[thick,black] at (3,4) {x};
\node[thick,black] at (6,4) {x};
\node[thick,black] at (9,4) {x};
\node[thick,black] at (12,4) {x};
\node[thick,black] at (0,6) {x};
\shade[shading=ball, ball color = red] (3,6) circle (1) node {};
\node[thick,black] at (6,6) {x};
\node[thick,black] at (9,6) {x};
\node[thick,black] at (12,6) {x};
\node[thick,black] at (0,8) {x};
\node[thick,black] at (3,8) {x};
\node[thick,black] at (6,8) {x};
\node[thick,black] at (9,8) {x};
\node[thick,black] at (12,8) {x};
\node[thick,black] at (0,10) {x};
\end{scope}

\end{tikzpicture}
\caption{The partitions $\gamma$ and $\lambda$ are as in Figure~\ref{fig:twoParts}. The abacus for $\gamma$ is on the left, for $\lambda$ on the right. Here $i=1$, $a=a_i=3$, $j=4$, and $b=a_j=1$. }
\label{fig:twoAbacus}
\end{figure}

In the remainder of this section, we collect a few facts about the
cover relation and partitions in $\Pnk$. Fix $n$ and $k$ and fix $\gamma\gtrdot\lambda$, both
in $\Pnk$, with extended $k$-abacuses as in
Definition~\ref{def:cover}. Let $\ell=j-i$ and $m=a_i-a_j$. Let $A$
denote the cell at the top of the strip $\lMinusg$ in $\lambda$ and
$A'$ the cell at the top of the strip $\gMinusl$ in $\gamma$. We have
$\reg_A$ and $r_A$ for the region and residue of $A$ and $\reg_{A'}$
and $r_{A'}$ for $A'$.

\begin{claim} \label{claim:singleCol} Using the notation described in previous paragraph, we claim the following:
\begin{enumerate}
\item both $\gMinusl$ and $\lMinusg$ are strips of length $\ell$ and each is contained in a single column;

\item the residues $r_A$ and $r_{A'}$ are equal;

\item 
  
  $$m=\frac{\ct{A}{\lambda}-\ct{A'}{\gamma}}{k};$$
  \item $\ell=|\gMinusl|<k$;
\item $m=\reg_A-\reg_{A'}$; and 

\item $\reg_{B}\leq 0$ for all $B\in\lambda$.

\end{enumerate}
\end{claim}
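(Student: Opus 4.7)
The plan is to translate the swap $a_i \leftrightarrow a_j$ from the extended $k$-abacus into explicit strip additions and removals in the Young diagrams of $\lambda$ and $\gamma$, using the standard bijection between beads (left steps) and the boundary silhouette of a partition. I would begin with parts (1), (3), and (4) simultaneously, since a single analysis of the swap yields all three. With $a_i > a_j$, the swap amounts to moving the bead on runner $i \bmod k$ down from row $a_i$ to row $a_j$ (removing cells from $\lambda$) and the bead on runner $j \bmod k$ up from row $a_j$ to row $a_i$ (adding cells). The inequality $j - k < i < j$ forces $\ell = j - i < k$, giving (4). The absence of any intermediate $h$ with $i < h < j$ and $a_j \leq a_h \leq a_i$ is the precise combinatorial condition that prevents the resulting ribbons from wrapping across multiple columns; the upshot is that $\gMinusl$ and $\lMinusg$ are each a single vertical strip of length exactly $\ell$, which is (1).

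For (3) I would track the contents of the top cells. Under the silhouette bijection, a bead at abacus position $p = ak + r$ (row $a$, runner $r$) encodes a specific left step, and a downward shift of that bead by one row on the same runner lowers the content of the top cell of the associated vertical strip by exactly $k$. Combining the two bead moves in the swap, the top cell $A \in \lMinusg$ and the top cell $A' \in \gMinusl$ correspond to left steps whose vertical displacement on the abacus is $a_i - a_j$ rows; the content formula then reads $\ct{A}{\lambda} - \ct{A'}{\gamma} = k(a_i - a_j) = km$, which is (3).

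Parts (2) and (5) are immediate consequences. For (2), divisibility of $\ct{A}{\lambda} - \ct{A'}{\gamma}$ by $k$ implies $r_A = r_{A'}$. For (5), dividing (3) by $k$ (valid because the residues agree) gives $\reg_A - \reg_{A'} = m$. For (6), I would observe that every partition in $\Pnk$ has first part at most $k - 1$: this holds for $\bgspart{n}{k}$ by construction, and each cover preserves the ``at most one bead per runner'' property noted in the definition of $\bgsset$, which is equivalent to $\lambda_1 \leq k - 1$ under our conventions. Given this, any cell $B \in \lambda$ at row $i$, column $j$ satisfies $\ctBox{B} = j - i \leq k - 2 < k$, so $\reg_B = \lfloor \ctBox{B}/k \rfloor \leq 0$.

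The hardest step is the first: verifying that both skew strips are single vertical strips (rather than more general ribbons) and of length exactly $\ell$. This requires a careful analysis of the silhouette walk between the two bead positions at abacus coordinates $(a_i, i \bmod k)$ and $(a_j, j \bmod k)$, invoking the intermediate-$h$ hypothesis to rule out any detour that would introduce horizontal moves in the strips. Once this geometric step is settled, (3) is a direct content computation, and (2), (4), (5), (6) follow as simple arithmetic or global observations about $\Pnk$.
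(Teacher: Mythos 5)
Your overall strategy (translate the bead swap into strip removals/additions, then read off contents) is the right one, and your deductions of (2), (4), (5), (6) from (1) and (3) are fine; note the paper itself gives no detailed argument here, disposing of the claim by citing the definitions, the fact that $\lambda_1<k$, and standard abacus facts. The genuine gap is in your justification of the key part (1). You decompose the swap into two moves \emph{along runners}: the runner-$i$ bead drops from row $a_i$ to row $a_j$ and the runner-$j$ bead rises from row $a_j$ to row $a_i$. Each of those is a shift of $mk$ abacus positions, hence removes or adds a rim hook of $mk$ cells, and such a hook generally winds through many columns no matter what the intermediate-$h$ condition says; that condition does not ``prevent wrapping'' for these hooks, the two hooks overlap heavily, and nothing in this decomposition produces the number $\ell=j-i$. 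The single-column, length-$\ell$ statement comes from the \emph{other} decomposition of the swap: move the bead at abacus position $a_jk+j$ to the empty position $a_jk+i$ (a shift of $\ell$ positions within row $a_j$, removing $\ell$ cells), then the bead at $a_ik+i$ to the empty position $a_ik+j$ (a shift of $\ell$ positions within row $a_i$, adding $\ell$ cells). Here the cover condition is exactly what is needed: it forbids $a_h\in[a_j,a_i]$ for $i<h<j$, so in particular the intermediate positions $a_jk+h$ and $a_ik+h$, $i<h<j$, are all gaps, and a bead moved only across gaps corresponds precisely to a vertical strip of that length at the bottom of a single column. The two strips are disjoint because the cell sitting just above the left step at abacus position $p$ has content $k-1-p$, so their content ranges differ by $mk\geq k>\ell$; hence $\lMinusg$ is the removed strip and $\gMinusl$ the added one, which is (1), and comparing the top cells via the same content formula gives $\ct{A}{\lambda}-\ct{A'}{\gamma}=mk$, which is (3).

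Two smaller points. Your description in (3) of $A$ and $A'$ as ``left steps whose vertical displacement is $a_i-a_j$ rows'' only becomes meaningful after the correct decomposition: both top cells sit over the runner-$j$ bead, at row $a_j$ in $\lambda$ and row $a_i$ in $\gamma$. And for (6), ``at most one bead per runner'' is not equivalent to $\lambda_1\leq k-1$; the reason every element of $\bgsset$ has first part less than $k$ is simply that its abacus has exactly $k-1$ beads and the silhouette is traced from $(k-1,0)$, so the first part equals $k-1$ minus the number of beads preceding the first gap. With that, $\ctBox{B}\leq k-2$ for every $B\in\lambda$, so $\reg_B\leq 0$ as you conclude.
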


Claim~\ref{claim:singleCol} follows from the definitions of the cover relation, the map between abacuses and partitions, the fact that $\lambda_1<k$, and well-known facts about abacuses \cite{vL,JK}.

Let $\R'$ be the cells in $\lambda$ which have residue $r_A$ and  
$\R$  the subset of $\R'$ which lie in region $\reg$,
for $\reg_{A'}\leq \reg\leq \reg_A$.

For a cell $B$ in $\lambda$, let $\unders{B}$ be the set of cells in the same column as $B$ which are both
below $B$  and above the next diagonal with
the same residue of $B$, as well as the cell $B$ itself. We have
$|\unders{B}|\leq k$.

\begin{figure}[h]
   \ytableausetup{boxsize=1.25em}

 \begin{tikzpicture}
\begin{scope}
\def\a{5}
\def\b{35}
\def\c{65}
\def\d{95}
%\node at (2.5,0){$\gtrdot$};
\node (A0) at (3., 2.5){Region $0$};
\node (B0) at (1., 2.5){};
\draw [thick, red,->](A0)--(B0);

\node (A1) at (2, 1){Region $-1$};
\node (B1) at (0, 1){};
\draw [thick, red,->](A1)--(B1);

\node (A2) at (1.5, -1){Region $-2$};
\node (B2) at (-.5, -1){};
\draw [thick, red,->](A2)--(B2);

\node (A3) at (1.5, -2.6){Region $-3$};
\node (B3) at (-.5, -2.6){};
\draw [thick, red,->](A3)--(B3);

\node at (0,0){\begin{ytableau}
*(gray!\a)0&*(gray!\a)1&*(gray!\a)2&*(gray!\a)3\\
*(gray!\b)4&*(gray!\a)0&*(gray!\a)1&*(gray!\a)2\\
*(gray!\b)3&*(gray!\b)4\\
*(gray!\b)2&*(gray!\b)3\\
*(gray!\b)1&*(gray!\b)2\\
*(gray!\b)0&*(gray!\b)1\\
*(gray!\c)4&*(gray!\b)0\\
*(gray!\c)3\\
*(gray!\c)2\\
*(gray!\c)1\\
*(gray!\c)0\\
*(gray!\d)4\\
*(gray!\d)3
\end{ytableau}
};
\end{scope}

\begin{scope}[shift={(6,0)}]
\def\li{20}
\def\da{60}
\node (A) at (2, 1){$A$};
\node (B) at (0, 1){};
\draw [thick, red,->](A)--(B);

\node (A1) at (1.5, -1.){$B$};
\node (B1) at (-.5, -1){};
\draw [thick, red,->](A1)--(B1);

\node at (0,0){\begin{ytableau}
0&1&2&3\\
4&0&1&2\\
3&4\\
2&3\\
1&*(gray!\li)2\\
0&*(gray!\li)1\\
4&*(gray!\li)0\\
3\\
*(gray!\da)2\\
*(gray!\da)1\\
*(gray!\da)0\\
*(gray!\da)4\\
*(gray!\da)3
\end{ytableau}
};
\end{scope}

\end{tikzpicture}

\caption{On the left, the regions are labelled for $k=5$. The cells contain the residue mod $5$ of their contents. On the right, $\unders{A}$ is shaded light gray and $\unders{B}$ is shaded dark gray, for $B\in\R.$ See Figure~\ref{fig:twoParts}. The cell regions are $\reg_A=-1$, $\reg_B=-2$, and $\reg_{A'}=-3$.}
\label{fig:regions}
\end{figure}
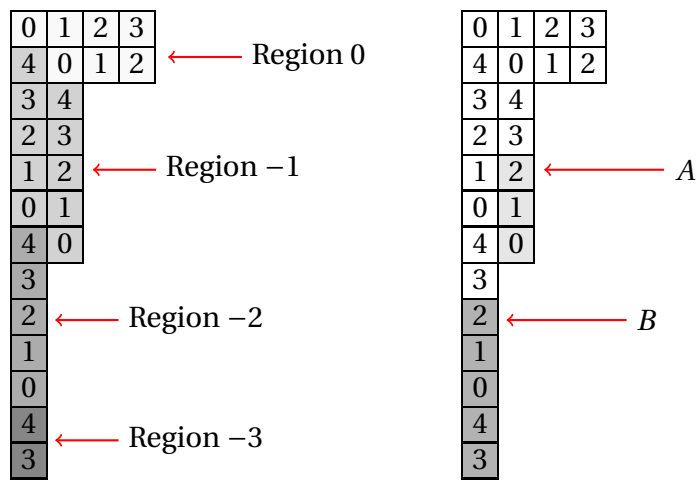

\begin{claim}
  \label{claim:unders}
  There are $\ell$ cells under $A$ ($|\unders{A}|=\ell$) and for all other $B\in\R$, $|\unders{B}|\geq \ell+1$.

\end{claim}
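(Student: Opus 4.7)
My plan is to reduce everything to inequalities about column lengths of $\lambda$ via the explicit formula $|\unders{B}| = \min(k, \lambda'_{y_B} - x_B + 1)$, which follows directly from the definition of $\unders{B}$ (writing $B = (x_B, y_B)$ and $\lambda'_y$ for the length of column $y$ of $\lambda$). Then the first assertion is immediate: since $A$ is the top of the column strip $\lMinusg$ of length $\ell$ and $\lambda \cap \gamma$ must be a partition, $\lMinusg$ consists of the bottom $\ell$ cells of column $y_A$ in $\lambda$; so $\lambda'_{y_A} = x_A + \ell - 1$, and $\ell < k$ from Claim~\ref{claim:singleCol}(4) gives $|\unders{A}| = \ell$.

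For the second assertion, fix $B \in \R \setminus \{A\}$; the goal is $\lambda'_{y_B} \geq x_B + \ell$. I would first dispatch the preliminary fact $y_B < y_A$: the case $y_B > y_A$ is ruled out because $\lambda'_{y_B} \leq \lambda'_{y_A + 1} \leq x_A$ (using that $\lMinusg$ being a column strip forces $\lambda_{x_A + 1} = y_A$) contradicts $x_B > x_A$ (which follows from $c_B \leq c_A$ together with $y_B > y_A$), and $y_B = y_A$ with $B \neq A$ is excluded by the residue constraint $x_B - x_A \equiv 0 \pmod{k}$ combined with $|x_B - x_A| < \ell < k$.

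The main obstacle is to rule out $\lambda'_{y_B} \in [x_B, x_B + \ell - 1]$. I plan to assume $\lambda'_{y_B} = x_B + t$ for some $t \in [0, \ell - 1]$ and derive a contradiction. A short calculation---using $x_B = x_A + km' - (y_A - y_B)$ with $m' := \reg_A - \reg_B \in [0, m]$---shows that the bead position for column $y_B$ of $\lambda$ equals $\beta + km' - s$ with $s = \ell - 1 - t \in [0, \ell - 1]$, where $\beta$ is the bead position for column $y_A$. Writing $\beta = a_j k + j$, this equals $(a_j + m')k + (j - s)$, which in the extended $k$-abacus of $\lambda$ is the entry $a_h = a_j + m'$ at extended index $h = j - s$. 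For $s \in [1, \ell - 1]$ one has $h \in (i, j)$ and $a_h \in [a_j, a_i]$, so condition~(4) of Definition~\ref{def:cover} gives the needed contradiction. The subtle boundary case will be $s = 0$, where condition~(4) does not directly apply: the position either coincides with $\beta$ and forces $y_B = y_A$ (if $m' = 0$), or places a second bead on runner $j$ at row $a_j + m' \neq a_j$ (if $m' \geq 1$), each contradicting the one-bead-per-runner structure of partitions in $\Pnk$. This case analysis works uniformly across $m' \in [0, m]$.
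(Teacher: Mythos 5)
Your proposal is essentially correct, and it lives in the same circle of ideas as the paper's proof: both translate $\unders{B}$ into abacus positions and ultimately contradict condition (4) of Definition~\ref{def:cover}. The organization is different, though, and arguably cleaner. The paper traces the silhouette of $\lambda$ step by step and argues about where left steps can occur on rods $i,\dots,j$; you instead reduce everything to the identity $|\unders{B}|=\min(k,\lambda'_{y_B}-x_B+1)$, obtain $|\unders{A}|=\ell$ directly from Claim~\ref{claim:singleCol}(1),(4) (the strip $\lMinusg$ sits at the bottom of column $y_A$, so $\lambda'_{y_A}=x_A+\ell-1$), and convert the hypothesis $\lambda'_{y_B}=x_B+t$, $t\le\ell-1$, into a bead of $\lambda$ at position $\beta+km'-s$, i.e.\ an extended-abacus entry $a_{j-s}=a_j+m'$ with $0\le m'\le m$ and $s=\ell-1-t$. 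I checked this computation (using the bead-position formula $\lambda'_c+(k-1-c)$ for column $c$ and $x_B=x_A+km'-(y_A-y_B)$) and the case analysis: for $1\le s\le\ell-1$ the index $j-s$ lies strictly between $i$ and $j$ with value in $[a_j,a_i]$, contradicting condition (4); for $s=0$ either $m'=0$, which forces $y_B=y_A$ and hence $B=A$, or $m'\ge1$, which puts two beads of $\lambda$ on the runner $j\bmod k$, contradicting the one-bead-per-runner property of partitions in $\Pnk$. Your explicit treatment of the boundary case $s=0$ is in fact more careful than the paper's rather terse second paragraph.

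Two points need attention. The substantive one: you write ``$\beta=a_jk+j$'' without justification, and this identification of the bead ending $A$'s column with the abacus datum $(a_j,j)$ is exactly what the first half of the paper's proof is devoted to; your argument genuinely needs it, since condition (4) is phrased in the coordinates $i,j,a_i,a_j$. It can be filled in quickly from facts you already use: the smallest position at which the bead sets of $\lambda$ and $\gamma$ differ is $a_jk+i$, where $\gamma$ has a bead and $\lambda$ does not, so the two boundary paths agree before that step; the left step of $\gamma$ there closes some column $c_0$, and since $\lambda$'s left step for that same column comes strictly later, $\lambda'_{c_0}>\gamma'_{c_0}$, which forces $c_0=y_A$ by Claim~\ref{claim:singleCol}(1); then $\lambda'_{y_A}=\gamma'_{y_A}+\ell$ gives $\beta=(a_jk+i)+\ell=a_jk+j$. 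Without some such argument the proof is incomplete. The minor one: in your preliminary reduction to $y_B<y_A$, the subcase $y_B=y_A$, $B\ne A$ is not excluded by ``$|x_B-x_A|<\ell$'' (a same-residue cell above $A$ in the same column could be far away); it is excluded because such a cell has region at least $\reg_A+1$ and so is not in $\R$, while a cell below $A$ in that column is within $\ell-1<k$ rows of $A$. In any case this preliminary is dispensable, since your $s=0$, $m'=0$ case already yields $B=A$.
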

\begin{proof} Suppose that $a_i$ and $a_j$ are as in Definition~\ref{def:abacus}. We may assume that either $0\leq i<j<k$ or $0\leq j-k< i<k$. 
  \omitt{We assume first that $i \bmod k<j\bmod k$ and set $i=i\bmod k$ and
  $j=j\bmod k$. The case $i \bmod k>j\bmod k$ is similar but
  unpleasant.} The condition that there is no $h$ such that $i<h<j$ and
  $a_j\leq a_h\leq a_i$ is key here. Call this condition C, for cover. 

As in Definition~\ref{def:abacus}, we trace the outline of our
partition $\lambda$ to obtain the abacus. The partition $\lambda$ need
not be $\bgspart{n}{k}$, but we do need to start at $(k-1,0)$, where
we imagine the upper left corner of the diagram of $\lambda$ at
$(0,0)$ and each cell being $1$ by $1$.

The step corresponding to the border of $A$ is the first step in the
path of $\lambda$ which is different from the path of $\gamma$. The
first abacus position where the abacus of $\lambda$ is different from
the abacus of $\gamma$ is $a_jk+i$: on $\gamma$'s abacus there is a
bead at this position, on $\lambda$'s, there is not. The left border
of $A$ is at step $a_jk+i$.  By condition C, there are no
beads/horizontal steps at $i+1,\ldots,j-1$ in the row $a_j$ of the
$\lambda$'s abacus, and there is a bead $\ell$ steps further, at
position $a_jk+j$. The bead at position $a_jk+j$ is in row $a_j$,
runner $j$ if $j<k$ and it is in row $a_j+1$, runner $j-k$, if $j\geq
k$. This beads represents the horizontal step in the silhouette of
$\lambda$ which ends $A$'s column.  Therefore,
$|\unders{A}|=i-j=\ell$. There are no more left steps landing on
rod $j$ and by condition C, each step on rod $i$ is followed by
down steps on rods $i+1,\dots, j-1$ until the row after we place
a bead on $i$. Thus we have left steps on $i,\dots,j$ so that
$|\unders{B}|\geq \ell+1$ for $B$ of residue $r_A$ and region
$\reg_A-1,\reg_A-2,\ldots,\reg_{A'}$.
  \end{proof}

 See \cite[Sections 8.2 and 8.3]{LLMS} for more information on
  extended offset sequences \cite[Lemma 9.4]{LLMS} for Bruhat order on
  affine permutations.

\begin{figure}[h]
\begin{tikzpicture}
  \begin{scope}
\def\a{1.7}
\def\b{1.2}
\node (0) at (0,0){$1^{15}$};
\node (1) at (0,\b){$21^{13}$};
\node (2a) at (-\a/2,2*\b){$2^51^{5}$};
\node (2b) at (\a/2,2*\b){$31^{12}$};
\node (3a) at (-\a,3*\b){$2^61^{3}$};
\node (3b) at (0,3*\b){$32^31^{}$};
\node (3c) at (\a,3*\b){$41^{11}$};
\node (4a) at (-\a,4*\b){$32^51^{2}$};
\node (4b) at (0,4*\b){$3^22^21^{5}$};
\node (4c) at (\a,4*\b){$42^21^{7}$};
\node (5a) at (-\a,5*\b){$42^51$};
\node (5b) at (0,5*\b){$3^22^31^3$};
\node (5c) at (\a,5*\b){$4321^6$};
\node (6a) at (-\a,6*\b){$3^5$};
\node (6b) at (0,6*\b){$432^31^2$};
\node (6c) at (\a,6*\b){$4^221^5$};
\node (7a) at (-\a/2,7*\b){$43^22$};
\node (7b) at (\a/2,7*\b){$4^22^21^3$};
\node (8) at (0,8*\b){$4^23^21$};
\node (9) at (0,9*\b){$4^33$};

\draw[thick,blue] (0)--(1);

\draw[thick,blue] (1)--(2a);
\draw[thick,blue] (1)--(2b);

\draw[thick,blue] (2a)--(3a);
\draw[thick,blue] (2a)--(3b);
\draw[thick,blue] (2b)--(3b);
\draw[thick,blue] (2b)--(3c);

\draw[thick,blue] (3a)--(4a);
\draw[thick,blue] (3a)--(4b);
\draw[thick,blue] (3b)--(4a);
\draw[thick,blue] (3b)--(4b);
\draw[thick,blue] (3b)--(4c);
\draw[thick,blue] (3c)--(4c);

\draw[thick,blue] (4a)--(5a);
\draw[thick,blue] (4a)--(5b);
\draw[thick,blue] (4b)--(5b);
\draw[thick,blue] (4b)--(5c);
\draw[thick,blue] (4c)--(5a);
\draw[thick,blue] (4c)--(5c);

\draw[thick,blue] (5a)--(6b);
\draw[thick,blue] (5b)--(6a);
\draw[thick,blue] (5b)--(6b);
\draw[thick,blue] (5b)--(6c);
\draw[thick,blue] (5c)--(6b);
\draw[thick,blue] (5c)--(6c);

\draw[thick,blue] (6a)--(7a);
\draw[thick,blue] (6b)--(7a);
\draw[thick,blue] (6b)--(7b);
\draw[thick,blue] (6c)--(7b);

\draw[thick,blue] (7a)--(8);
\draw[thick,blue] (7b)--(8);

\draw[thick,blue] (8)--(9);
\end{scope}

\end{tikzpicture}
\caption{The poset $\mc{P}(15,5)$}
\label{fig:poset}
\end{figure}

\subsection{Basis of the standard modules}

In Section~\ref{sec:stanMod}, we define the standard modules and in
Section~\ref{sec:stanModBasis} we mention a basis $f_{P,Q}$ for
$\stanmod$. It was originally defined in \cite{Gri2}. Here we will describe it in more detail than in Section~\ref{sec:stanModBasis} and give it a
different indexing set.  Additionally, we use a rescaled version of the polynomials. This affects the action of $\sigma_i$: see \eqref{eq:sigmajack}.

%\sftodo{To Stephen: is this the basis for $L_c$ or for $\stanmod$ or both?}
%\sftodo{Still confused. The basis is defined on $L$ not $\stanmod$}
\subsubsection{Generalized Jack polynomials}
\label{sec:genJack}
We'll now use  $$\{(\mu,T)\mid \mu\in\Z^n_{\geq 0} \text{ and $T$ is a standard Young tableau of shape $\lambda$}\}$$
for the indexing set for our basis of $\stanmod(\lambda)$. The bijection between pairs $(\mu,T)$ and the pairs $(P,Q)$ of Section~\ref{sec:stanModBasis} is given by
$$P(b)=w_{\mu}^{-1}(T(b))\quad\text{and}\quad Q(b)=\mu_{P(b)} $$ and
$$\mu_i=Q(P^{-1}(i))\quad\text{and}\quad T(b)=w_{\mu}(b).$$ We view the standard Young tableau $T$ as a function from the boxes of $\lambda$ to $[n]$ and the permutation $w_{\mu}$ is defined by
$$w_{\mu}(i)=|\{1\leq j<i:\mu_j<\mu_i\}|+|\{i\leq j\leq n:\mu_j\leq
\mu_i\}|.$$ The permutation $w_{\mu}$ has the property that
$$\mu_{w^{-1}_\mu(1)}\leq \mu_{w^{-1}_\mu(2)}\leq\ldots\leq
\mu_{w^{-1}_\mu(n)}$$ and it is the longest permutation of $[n]$ with
this property.

%\sftodo{To Strphen: should I write $z_i,\sigma_i:?\to?$}

%\sftodo{This has to refer to the rescaled Jack poly./JM poly.}
Let $\zeroComp$ be the weak composition of $0$ with $n$ parts. The polynomial $\jacko{T}$ is defined to be $v_T$ from Section~\ref{seminormal}. The recursions
\begin{equation}
\jack{(\mu_n+1,\mu_1,\ldots,\mu_{n-1})}{T}=x_ns_{n-1}\cdots s_1\jack{\mu}{T},
\end{equation}
where $\mu=(\mu_1,\ldots,\mu_n)$ and $s_i$ is the simple transposition
$(i,i+1)$,
and
\omitt{
\begin{equation}\jack{s_i\mu}{T}=\sigma_i\jack{\mu}{T}\text{, where $\mu_i<\mu_{i+1}$,} \label{part:recb}
\end{equation}
}

\omitt{$\{\jack{\mu}{T}\}$ is a basis for $\stanmod(\lambda)$ and the action of $\sigma_i$ on it is given by 
}
\begin{equation}\label{eq:sigmajack}
\sigma_i\jack{\mu}{T}=\begin{cases}
\jack{s_i\mu}{T}&\text{if $\mu_i>\mu_{i+1}$}\\
\frac{\delta^2-c^2}{\delta^2}\jack{s_i\mu}{T}&\text{if $\mu_i<\mu_{i+1}$}\\
\jack{\mu}{s_{j-1}T}&\text{if $\mu_i=\mu_{i+1}$, $\ell(s_{j-1}T)>\ell(T)$, and}\\
&\text{$s_{j-1}T$ is a standard Young tableau}\\
1-\left(\frac{1}{\ct{T}{j}-\ct{T}{j-1}}\right)^2\jack{\mu}{s_{j-1}T}&\text{if $\mu_i=\mu_{i+1}$, $\ell(s_{j-1}T)\ell(T)$, and}\\
&\text{$s_{j-1}T$ is a standard Young tableau}\\
0&\text{$s_{j-1}$ is not a standard Young tableau}
\end{cases}
\end{equation}

where $j=w_\mu(i)$ and $\delta=\delta(\mu,T,i)=\mu_i-\mu_{i+1}-c(\ct{T}{w_\mu(i)}-\ct{T}{w_\mu(i)})$,
define $\jack{\mu}{T}$ for all weak compositions.

We can write

\begin{equation}
\jack{\mu}{T}=\sum a_S(c,x_1,\ldots,x_n)\jacko{S}\label{eq:jacksum},
\end{equation}
%\sftodo{below is NEW}
where $a_S(c,x_1,\ldots,x_n)$ is a polynomial in $x_1,\ldots, x_n$ with coefficients that are rational functions of $c$. See \cite{Gri2} for more on $z_i$ and $\sigma_i$, where $1\leq i\leq n$. %We don't need their actual definitions.

\subsubsection{Weights}
\label{subsec:weights}
We fix $n$ and $k$ and let $c=1/k$.
The generalized Jack
polynomial $\jack{\mu}{T}$ is an eigenvector for $z_i$: we have
$z_i\jack{\mu}{T}=\wti{\mu}{T}{i}\jack{\mu}{T}$. See \cite{Gri2}. The expression for
$\wti{\mu}{T}{i}$ is $\mu_i+1-\ct{w_{\mu}(i)}{T}c$, where $w_{\mu}$ is defined in Section~\ref{sec:genJack} and the weight
vector $\wt{\mu}{T}$ for $(\mu,T)$ is
$\wt{\mu}{T}=(\wti{\mu}{T}{1},\ldots,\wti{\mu}{T}{n})$.

We will sometimes refer to the weight of a cell. Let $\lambda$ be a
partition of $n$, $B$ a cell in $\lambda$, $S$ a standard Young
tableau of shape $\lambda$, and $\eta$ a weak composition of length
$n$. Suppose $i$ is in cell $B$ of $S$.  Then define $\eta_B$ to be
 $\eta_{w_{\eta}^{-1}(i)}$ and $\wti{\eta}{S}{B}=\eta_B+1-\reg-r/k$, where
$\ctBox{B}=k\reg+r$, $0\leq r <k$.

The index pair $(\mu,T)$ is {\em multiplicity one} if there is not a
different pair $(\mu',S)$ with the same weight vector, where $S$ and
$T$ are standard Young tableaux of the same shape and $\mu$ and $\mu'$
are weak compositions of the same number.

The multiplicity one property is important because we can use it to
prove that the Jack polynomial $\jack{\mu}{T}$ is defined, since there
could be singularities in the coefficients in \eqref{eq:jacksum}.

\begin{claim}\label{claim:multOne}
Suppose that the eigenvalue for a pair $(\mu,T)$  occurs with multiplicity one in a given standard module for a
given parameter $c=c_0$. Then the Jack polynomial $\jack{\mu}{T}$ has no pole at $c_0$.
\end{claim}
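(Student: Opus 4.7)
The strategy is to realize $\jack{\mu}{T}$ at $c_0$ as the image of a spectral projection whose denominators are controlled by the multiplicity-one hypothesis. Each Dunkl-Opdam element $z_i$ preserves the $\CC[V]$-grading on $\Delta_c(\lambda)$, so the finite-dimensional graded piece $W=\Delta_c(\lambda)_{|\mu|}$ is stable under the commutative subalgebra $\ttt$. Fix a $c$-independent basis of $W$ consisting of degree $|\mu|$ monomials in the $x_i$ tensored with the seminormal basis $v_S$ of $S^\lambda$. In this basis each $z_i$ is represented by a matrix with entries polynomial in $c$, and for generic $c$ the joint eigenvectors are precisely the Jack polynomials $\jack{\nu}{S}$ with $|\nu|=|\mu|$, with joint eigenvalue (weight vector) $\wt{\nu}{S}(c)$.

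By the multiplicity-one hypothesis, $\wt{\mu}{T}(c_0)\neq \wt{\nu}{S}(c_0)$ for every other pair $(\nu,S)$ with $|\nu|=|\mu|$. Pick a linear functional $L:\CC^n\to\CC$ separating $\wt{\mu}{T}(c_0)$ from each of these finitely many other weights, and define the spectral projector
$$\pi(c)=\prod_{\substack{(\nu,S)\neq(\mu,T) \\ |\nu|=|\mu|}}\frac{L(z_1,\dots,z_n)-L(\wt{\nu}{S}(c))}{L(\wt{\mu}{T}(c))-L(\wt{\nu}{S}(c))}\in\mathrm{End}(W)\otimes\CC(c).$$
By construction none of the denominators vanish at $c=c_0$, so $\pi(c)$ is regular there. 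At generic $c$, $\pi(c)$ is the projection of $W$ onto the line $\CC\jack{\mu}{T}$ along the complementary Jack-polynomial lines, and the specialization $\pi(c_0)$ is a rank-one idempotent on $W$ whose image is the one-dimensional $L(\wt{\mu}{T}(c_0))$-joint eigenspace of $L(z_1,\dots,z_n)$.

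To conclude, choose any basis vector $e\in W$ with $\pi(c_0)e\neq 0$ (at least one exists since $\pi(c_0)\neq 0$). The family $v(c):=\pi(c)e$ is regular at $c_0$ with $v(c_0)\neq 0$, and for generic $c$ it is a nonzero scalar multiple $\phi(c)\jack{\mu}{T}(c)$ of the Jack polynomial. It then suffices to verify that $\phi(c)$ is regular and nonvanishing at $c_0$; this is essentially analytic perturbation theory (the eigenline for a simple joint eigenvalue of a holomorphic family of commuting matrices varies holomorphically). Concretely, it amounts to checking that the normalization of $\jack{\mu}{T}$ implicit in the recursion of Section~\ref{sec:genJack} does not conspire to make the entire family $\jack{\mu}{T}(c)$ scale by a vanishing power of $(c-c_0)$: one tracks the recursive definition to identify a distinguished basis coordinate of $\jack{\mu}{T}$ whose coefficient is a polynomial in $c$ nonvanishing at $c_0$, which pins $\jack{\mu}{T}$ down uniquely as a regular section of the eigenline. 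The main obstacle is precisely this final normalization step, where the combinatorics of the recursion must be reconciled with the general spectral theory.
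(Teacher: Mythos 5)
Your projector construction is fine as far as it goes: the $z_i$ preserve the polynomial grading, a generic functional $L$ separates $\wt{\mu}{T}$ at $c_0$ from the finitely many other weights of the same degree, so the denominators of $\pi(c)$ do not vanish at $c_0$, and $\pi(c_0)$ is an idempotent of trace $1$, hence of rank one. But the proof is not complete, and the step you defer --- that the scalar $\phi(c)$ with $v(c)=\phi(c)\jack{\mu}{T}$ is regular and nonvanishing at $c_0$ --- is not a routine perturbation-theory check: it is essentially the whole content of the claim. Holomorphy of the eigen\emph{line} through $c_0$ says nothing about the particular section $\jack{\mu}{T}$ of that line (for instance $(c-c_0)^{-1}v(c)$ spans the same line generically and has a pole), so everything hinges on the normalization of $\jack{\mu}{T}$ and on knowing that the limiting line has nonzero coordinate on the distinguished basis vector $x^\mu v_T$. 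What is needed here is the triangularity from \cite{Gri2}: $\jack{\mu}{T}=x^\mu v_T+(\text{lower order terms})$ with leading coefficient $1$, and the $z_i$ act on the monomial basis $x^\nu v_U$ triangularly with diagonal entries the weights, so that the eigenvalue of any eigenvector at $c_0$ is the weight of its leading pair. Granting this, your argument closes: the $x^\mu v_T$-coordinate of $v(c)$ equals $\phi(c)$, hence $\phi$ is regular at $c_0$; and if $\phi(c_0)=0$, then $v(c_0)$ is a nonzero eigenvector whose leading pair $(\nu,U)\neq(\mu,T)$ satisfies $\wt{\nu}{U}=\wt{\mu}{T}$ at $c_0$, contradicting multiplicity one. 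Note also that even identifying the image of $\pi(c_0)$ with a one-dimensional eigenspace already uses this triangularity, since the multiplicity-one hypothesis is a statement about the combinatorial weights of pairs and must be translated into a statement about the spectrum of $L(z_1,\dots,z_n)$ at $c_0$.

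Once you make that last step explicit, you will see it is precisely the paper's proof in miniature: there one takes $e$ to be the maximal order of pole among the coefficients of $\jack{\mu}{T}$, multiplies by $(c-c_0)^e$ and specializes, obtaining a nonzero eigenvector for the same eigenvalue whose leading term is strictly smaller than $x^\mu v_T$ (the unit leading coefficient is killed), which exhibits a second pair with the same weight and contradicts multiplicity one. So the spectral-projector apparatus is correct but adds machinery without removing the need for the leading-term comparison; as written, the proposal reduces the claim to an unproved statement rather than proving it.
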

\begin{proof} The Jack polynomial $\jack{\mu}{T}$ for generic polynomial parameter $c$ is a sum of terms of the form
$p(c) x^\nu v_U$ where $p(c)$ is a rational function of $c$, which we assume
is written without common divisors between the numerator and
denominator. Suppose $\jack{\mu}{T}$ has a pole at $c=c_0$, and among all such
terms, choose one with the largest power of $(c-c_ 0)$ in the
denominator, say its denominator is divisible by $(c-c_0)^e$ but not by
$(c-c_0)^{e+1}$. Multiplying $\jack{\mu}{T}$ by $(c-c_ 0)^e$ then produces a sum
of terms without pole at $c=c_0$, and so that upon specializing $c$ to $c_0$
the leading term is smaller than that of $\jack{\mu}{T}$. This polynomial is
an eigenfunction for the same eigenvalue as that of $\jack{\mu}{T}$, and
witnesses multiplicity at least two. So if the multiplicity is one,
then $\jack{\mu}{T}$ cannot have a pole at $c=c_0$.
\end{proof}

\subsection{Definition of the map}
\label{subsec:maps}

Fix $n$ and $k$ and let $c=1/k$ and suppose that $\gamma\gtrdot \lambda$ in
$\Pnk$. This section of the paper is concerned with the definition of
the map $\Map$.
We essentially have two maps, a combinatorial one and an algebraic one, both
denoted by $\Map$. 
The combinatorial one takes a tableau of shape $\gamma$ and returns a
composition-tableau pair, where the (weak) composition is of length $n$ and the tableau is of shape $\lambda$. The algebraic one is a map of modules:
$\Map:S^{\gamma}\to\stanmod(\lambda)$, $\Map:\jacko{T}\mapsto
\mapconst{\TDo}\jack{\mu'}{\TIm}$. The combinatorial map defines the algebraic one, up a scalar. In Sections~\ref{BGG} and \ref{sec:multOne} we focus mainly on the combinatorial map, and on algebraic implications in Section~\ref{sec:looseEnds}. 

%We discuss the constant $\mapconst{\TDo}$ in Section~\ref{sec:looseEnds}.

%\sftodo{Changed to Specht, added map constant here.}
%$\Map:\stanmod(\gamma)\to\stanmod(\lambda)$.

\omitt{ We will often denote $\jack{\mu}{T}$ simply by its indices $(\mu,T)$,
so that $(\mu,T)$ is both a composition-tableau pair and a generalized
Jack polynomial. Most of our definition and analysis of the map $\Map$
involves only the combinatorial properties of the pair $(\mu,T)$.}

We define the map on the \omitt{basis $\jack{\mu}{\TDo}$}
$T\in\syt(\lambda)$ in several steps. As a note, we can do this
because we need only define the algebraic version on $\jacko{\TDo}$
and extend using \eqref{eq:jacksum}.

Since $\gamma$ covers $\lambda$, the $k$-abacus for $\gamma$ is the same as the
$k$-abacus for $\lambda$, except that $a_i$ and $a_j$ have been
interchanged. Define $m$ as $a_i-a_j$ and define
$\standComp=(\underbrace{m,\ldots,m}_{\textrm{$\ell$ times}},0,\ldots,0)$.

Let $\TDo\in \syt(\gamma)$. $\rev{\TDo}$ is the result of replacing $i$
with $n-i+1$ in $\TDo$. Denote the entries in $\rev{\TDo}$ in the cells from
$\gMinusl$ by $i_1<i_2<\cdots<i_{\ell}$. The weak composition $\mu$
is defined by 
$$\mu_i=
\begin{cases}m&\mbox{if }i\in\{i_1,\ldots,i_{\ell}\}\\
0&\mbox{otherwise},\end{cases}$$
for $1\leq i\leq n.$ Next move the strip $\gMinusl$ to the
positions in $\lMinusg$, so that now we have $P$ of shape
$\lambda$. Calculate $w_{\mu}$ and apply to $P$ to finally obtain
$\TIm$. Then $\Map(\zeroComp,\TDo)=(\mu,\TIm)$ or
$\Map(\jacko{\TDo})=\mapconst{\TDo}\jack{\mu}{\TIm}$, where we discuss
$\mapconst{\TDo}$ in Section~\ref{sec:looseEnds}.  The map
$\phiInv{\eta}{\SIm}$ is defined for any composition $\eta$ of $n$
with $\eta^+=\standComp$ and $\SIm$ a standard Young tableau of shape
$\lambda$ and an element of $\I$ by reversing these steps, even for
$(\eta,\SIm)$ not in the image of $\Map$.

We mention without discussion that the map $\Map$ may be defined on pairs $(P,Q)$ (see Section~\ref{sec:stanModBasis}) instead of pairs $(\zeroComp,\TDo)$ and that it preserves weight. See Claim~\ref{claim:zero} for a special case of this.

%\sftodo{New paragraph. Is this summary ok?}
We must address several issues. First, if
$\Map(\zeroComp,\TDo)=(\mu,\TIm)$, then we must show $\jack{\mu}{\TIm}$
exists. Second, we must show that $\sigma_i\Map=\Map\sigma_i$, which includes showing that $\mapconst{\TDo}$ is well-defined. We address the first issue immediately in Sections~\ref{BGG} and \ref{sec:multOne}, the second in Section~\ref{sec:looseEnds}.

The function $\jack{\mu}{\TIm}$ may not exist for all $(\mu,\TIm)$ in the
image. The coefficients, which depend on $c$, may not be
defined. What's more, because of the recursion \eqref{eq:sigmajack} which
defines the Jack polynomials, we must consider elements of the
near-image:

\begin{definition}
Suppose $\gamma\gtrdot\lambda$ The pair $(\mu,\TIm)$, which represents
an element of $\stanmod(\lambda)$, is in the {\em near-image of
$\Map$} if it is not in the image but there is an $i$, $1\leq i\leq
n-1$, such that $(s_i\mu,\TIm)$ is in the image of $\Map$.
\end{definition}

Claim~\ref{claim:multOne} shows that the polynomial $\jack{\mu}{\TIm}$
is well-defined when $(\mu,\TIm)$ is multiplicity
one. Section~\ref{sec:multOne} is devoted to proving the
$\jack{\mu}{\TIm}$ in the image and near-image of $\Map$ are
multiplicity one.

\begin{example}
\label{ex:mapExample} In this example, $n$ is $11$, $k$ is $5$, $\gamma$ is $(3,2,2,2,1,1)$ and $\lambda$ is $(3,3,3,2)$. In Figure~\ref{fig:mapExample}, we map $(\zeroComp,\TDo)$ to $(\mu^1,\TIm)$. Let $\mu^2=(0,1,0,0,1,0,0,0,0,0,0)$. Then $\phi^{-1}(\mu^2,\TIm)$ is in the near-image, since it is not in the image and $(s_4\mu^2,\TIm)$ is.
\end{example}

%\sftodo{Make an example here, unless we have to change the map.}
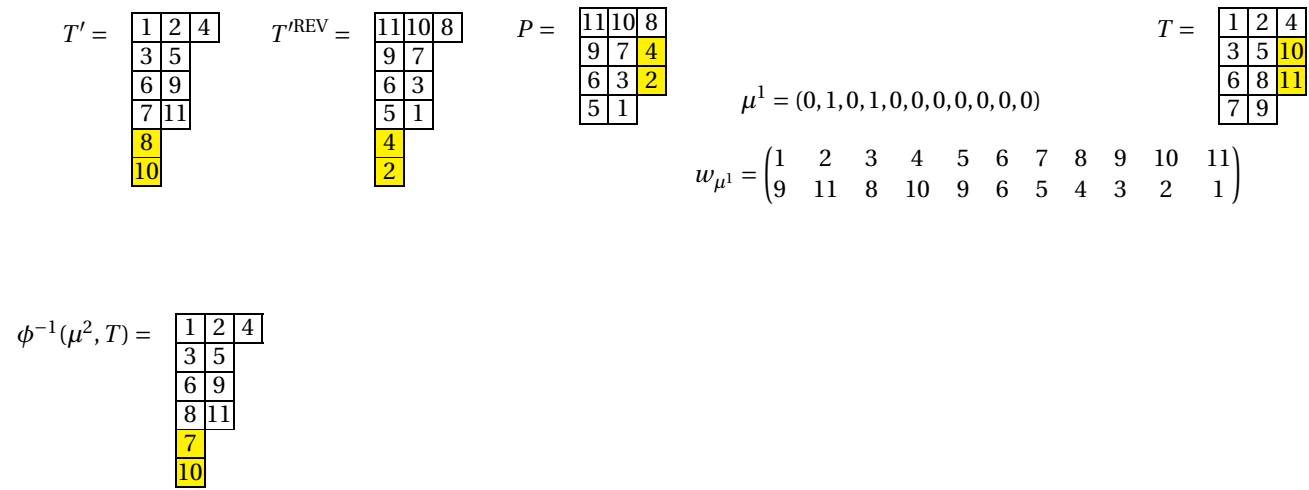
\begin{figure}[h]
   \ytableausetup{boxsize=1.2em}
\begin{tikzpicture}[font=\footnotesize]
  \begin{scope}

\begin{scope}[shift={(0,0)}]
\def\a{.45}
\node at (0,0) {$T'=$\quad\begin{ytableau}1&2&4\\3&5\\6&9\\7&11\\*(yellow)8\\*(yellow)10\end{ytableau}};
\node at (3,0) {$\rev{T'}=$\quad\begin{ytableau}11&10&8\\9&7\\6&3\\5&1\\*(yellow)4\\*(yellow)2\end{ytableau}};
\node at (6,\a) {$P=$\quad\begin{ytableau}11&10&8\\9&7&*(yellow)4\\6&3&*(yellow)2\\5&1\end{ytableau}};
\node at (10,0) {$\mu^1=(0,1,0,1,0,0,0,0,0,0,0)$};
\node at (11,-1){$w_{\mu^1}=\begin{pmatrix}1&2&3&4&5&6&7&8&9&10&11\\9&11&8&10&9&6&5&4&3&2&1\end{pmatrix}$};
\node at (14.5,\a){$T=$\quad\begin{ytableau}1&2&4\\3&5&*(yellow)10\\6&8&*(yellow)11\\7&9\end{ytableau}};
\end{scope}

\begin{scope}[shift={(0,-4)}]
\node at (0,0){$\phi^{-1}(\mu^2,T)=$\quad\begin{ytableau}1&2&4\\3&5\\6&9\\8&11\\*(yellow)7\\*(yellow)10\end{ytableau}};
\end{scope}

\end{scope}

\end{tikzpicture}
\caption{Figures for Example~\ref{ex:mapExample}}
\label{fig:mapExample} 
\end{figure}

\omitt{I claim that if $\gamma\gtrdot\lambda$ in an edge in $\Pnk$, then the
image of $\stanmod(\gamma)$ is all multiplicity one. }

\section{Multiplicity one}
\label{sec:multOne}
This section is dedicated to proving that following theorem.

\begin{theorem}
\label{thm:multOne}
  If $\gamma\gtrdot\lambda$ in an edge in $\Pnk$, then the
image and near-image of $\stanmod(\gamma)$ under $\Map$ is all multiplicity one. 

\end{theorem}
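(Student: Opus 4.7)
The plan is to show that the weight vector of a pair $(\mu, T')$ in the image or near-image of $\Map$ uniquely determines the pair. For image pairs I will argue that only one composition-tableau pair on $\lambda$ realizes the weight vector; near-image pairs are then handled by the intertwiner $\sigma_i$ from Section~\ref{cyclotomic}.

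First I would compute the virtual contents of an image pair. For $(\mu, T') = \Map(\zeroComp, T)$, the composition $\mu$ is zero except at the positions $i_1 < \cdots < i_\ell$ extracted from $\rev{T}$, where it equals $m$. Using Claim~\ref{claim:singleCol}(3), the identity $km = \ct{A}{\lambda} - \ct{A'}{\gamma}$ yields that the \emph{virtual content} $v_p := \ct{w_\mu(p)}{T'} - k\mu_p$ at each strip position $i_j$ equals the content of the corresponding cell of $\gMinusl$; at non-strip positions $v_p$ is the content of the containing cell of $\gamma \cap \lambda$. Hence the multiset $\{v_p\}_p$ coincides with the content multiset of $\gamma$.

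Second, suppose $(\mu', S')$ with $S' \in \syt(\lambda)$ and $\mu' \in \ZZ_{\geq 0}^n$ has the same weight vector as $(\mu, T')$. Then at each position $p$, the pairs $(\mu'_p, \ct{w_{\mu'}(p)}{S'})$ and $(\mu_p, \ct{w_\mu(p)}{T'})$ differ by a shift $(-t_p, -kt_p)$ for some integer $t_p$. I would pin down the cells of $S'$ label by label, in decreasing order $j = n, n-1, \dots, 1$. At each step, the cell of $S'$ containing $j$ must (a) be a removable corner of the partition obtained from $\lambda$ by deleting the cells with labels $> j$ (so the tableau is standard), and (b) have content congruent to $v_{w_{\mu'}^{-1}(j)}$ modulo $k$ and at least $v_{w_{\mu'}^{-1}(j)}$. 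The at-most-one-bead-per-runner property of $\lambda \in \Pnk$ forces the removable corners at each stage to have pairwise distinct residues modulo $k$, so for each $j$ there is at most one candidate cell of the correct residue; the lower bound on the content then eliminates shifted alternatives. By induction, $S' = T'$ and $\mu' = \mu$.

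Third, the critical case is positions corresponding to strip residues $r \in \{r_A, r_A-1, \dots, r_A-\ell+1\} \pmod{k}$, where the virtual content lies below the minimum content in $\lambda$ and a strictly positive $\mu'$ is required. Here Claim~\ref{claim:unders} is the key input: the cell-count inequality $|\unders{A}| = \ell$ versus $|\unders{B}| \geq \ell+1$ for $B \in \mathcal{R} \setminus \{A\}$ ensures that the natural assignment of the strip residue-$r$ virtual content to the corresponding cell of $\lMinusg$, with $\mu' = m$, is the only possibility consistent with the removability condition together with $\mu' \geq 0$; any alternative would redistribute mass as a cascade of smaller shifts along cells $B$ of the same residue, which Claim~\ref{claim:unders} rules out by a column-length comparison.

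Finally, for the near-image: if $(\mu'', T')$ is in the near-image and $(s_i \mu'', T') = (\mu, T')$ lies in the image, the intertwiner $\sigma_i = s_i + c_0 (z_i - z_{i+1})^{-1} \pi_i$ sends the $(\mu, T')$-eigenspace isomorphically onto the $(\mu'', T')$-eigenspace in $\stanmod(\lambda)$ (the denominator $z_i - z_{i+1}$ being nonzero at this weight since the two weights differ at positions $i, i+1$). Multiplicity one therefore transfers from the image pair to the near-image pair.

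The hard part will be step three: carrying out the cascade argument with enough precision that Claim~\ref{claim:unders} really is sufficient to exclude every alternative. I expect this will require either an induction on the region $\rho$ of the candidate cell, or a direct translation of the obstruction into an abacus statement about which bead-moves are compatible with the weight-preserving shifts.
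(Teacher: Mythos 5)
Your opening step is fine and is essentially the paper's Claim~\ref{claim:zero}: equality of weight vectors at $c=1/k$ forces, position by position, congruence of contents mod $k$ and a shift by $k$ times the composition entry, and for an image pair the resulting ``virtual contents'' are exactly the contents of $\gamma$. But the decisive steps have genuine gaps. In your second step, the claim that at each stage of the peeling the removable corners have pairwise distinct residues mod $k$ is false: the one-bead-per-runner property holds for $\lambda\in\Pnk$ itself, but it is not inherited by the sub-partitions obtained by stripping the largest labels of an arbitrary competing tableau $\SIm$. For example with $k=3$ the intermediate shape $(2,1,1)$ has removable corners of contents $1$ and $-2$, both of residue $1$, so the label-by-label induction does not pin down the cell, and the lower bound ``content $\ge v$'' does not break the tie when both candidates lie above the virtual content. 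This ambiguity is exactly where the theorem's real content lies, and it is the part you defer (``the hard part will be step three''): the paper first eliminates competitors at the level of the \emph{multiset} of weights (Proposition~\ref{prop:difftMultisets}), using Claims~\ref{claim:etaRegion} and \ref{claim:etaSum} together with the count $|\unders{A}|=\ell$ versus $|\unders{B}|\ge\ell+1$ of Claim~\ref{claim:unders}, and then separates the remaining competitors (same $\eta^+$, same positions of $n,\dots,n-\ell+1$) by pulling the weight vector back to shape $\gamma$ and running a diagonal-cycling argument that terminates in a column-length comparison (Proposition~\ref{prop:bigPropOne}). As written, your proposal does not carry out this step, so it does not yet prove the statement.

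The near-image reduction also fails as stated. Transferring multiplicity one through $\sigma_i$ needs the intertwiner to be invertible between the two weight spaces, i.e.\ $\delta^2\neq c^2$ where $\delta$ is the difference of the two weight entries being swapped; nonvanishing of $z_i-z_{i+1}$ only gives well-definedness. For genuine near-image pairs one has $\delta=\pm c$: in Example~\ref{ex:mapExample}, for $(\mu^2,\TIm)$ the weight entries at positions $4$ and $5$ are $8/5$ and $9/5$ while $c=1/5$, so $\sigma_4^2$ acts by $(\delta^2-c^2)/\delta^2=0$ and no dimension or multiplicity statement transfers in either direction. This degeneration is precisely why the near-image requires separate treatment, which the paper gives combinatorially via Claim~\ref{claim:smallProblem} and Proposition~\ref{prop:sinuOK} (the same diagonal argument, applied to the almost-standard inverse tableau). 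Note also that the theorem is a purely combinatorial statement about index pairs at the special parameter, used in Claim~\ref{claim:multOne} to rule out poles; an argument through eigenspace dimensions of $\stanmod(\lambda)$ would additionally need the identification of those dimensions with the number of index pairs having the given specialized weight, which should be stated rather than assumed.
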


\omitt{
We begin by introducing a lot of notation and collecting many simple
facts which can be deduced about $\gamma$ and $\lambda$ and the image
of $\stanmod(\gamma)$ in $\stanmod(\lambda)$ under $\phi$ in the case
when $\gamma\gtrdot\lambda$. The residues and regions, defined below,
are constrained by the cover relations in the poset; we state the
properties we'll need. Next, in Section~\ref{subsec:multiset} we show
that the multiset of weights is unique for a pair composed of a
composition and tableau of shape $\lambda$, under certain
conditions. Finally, we prove the theorem in \ref{subsec:multOne}.}
\subsection{The image and the near-image}

\begin{notation}
From now on, fix $n$ and $k$ and fix $\gamma\gtrdot\lambda$, both in
$\Pnk$, with extended $k$-abacuses as in Definitions~\ref{def:abacus}, \ref{def:extAba}, and \ref{def:cover}. \omitt{Denote $a_i$ by $a$ and
$a_j$ by $b$.} Let $\ell=j-i$ and $m=a_i-a_j$. \omitt{As in
Section~\ref{sec:partialOrder}, let $A$ denote the cell at the top of
the strip $\lMinusg$ in $\lambda$ and $A'$ the cell at the top of the
strip $\gMinusl$ in $\gamma$. }We fix $(\mu,\TIm)$ the rest of the
section. $(\mu,\TIm)$ will denote a pair in the image or near-image of
the map $\Map$ from $\stanmod(\gamma)$ to $\stanmod(\lambda)$.  Let
$\I=\I(\gamma,\lambda)$ denote the set of $\SIm\in \syt(\lambda)$ which
have entries $n,n-1,\ldots,n-\ell+1$ in
$\lMinusg$. Claim~\ref{claim:muTType}\eqref{claim:IPart} shows why
this set is important.   Let $\col$ be the index of the
column in $\gamma$ that contains $\gMinusl$, and let
$r_0,\ldots,r_0+\ell-1$ be the indices of the rows that contain
$\gMinusl$. 

\omitt{A cell $(x,y)$ in $\lambda$ is in region $\reg$ with residue $r$ if
$y-x=k\reg +r$, where $0\leq r<k$. A region which we number $\reg$
would be numbered $\reg-1$ in \cite{GKS}.  In general, for a cell $B$
we denote its region by $\reg_B$ and residue by $r_B$. We have
$\reg_A$ and $r_A$ for the region and residue of $A$ and $\reg_{A'}$
and $r_{A'}$ for $A'$. Additionally, we define the diagonal $D_B$ of a
cell $B$: $$D_B=\{C\in\lambda|\ctBox{C}=\ctBox{B}\}.$$}
\end{notation}

\omitt{
Claims~\ref{claim:muTType}, and
\ref{claim:mAndReg} follow from the definitions of the cover relation,
the abacus, and the map $\phi$. Additionally, note that
Claims~\ref{claim:singleCol}, \ref{claim:mAndReg}, and
\ref{claim:unders} analyze $\gamma$ and $\lambda$ and don't address
tableaux.
}

We can pinpoint the inverse image of a pair $(\eta,\SIm)$ in the near
  image.  We will need the following claim for
  Claim~\ref{claim:muTType} and Proposition~\ref{prop:sinuOK}.

  \begin{claim} \label{claim:smallProblem} Suppose $(\eta,\SIm)$ is in
    the near-image: $\phiInv{\eta}{\SIm}=(\zeroComp,\SDo)$, where $\SDo$ is
    not a standard Young tableau, and
    $\phiInv{s_i\eta}{\SIm}=(\zeroComp,\SDo_1)$, where $\SDo_1$ is. Also
    suppose $\SIm\in\I$. Then $\eta_{i+1}=m$, $\eta_i=0$, and $\SDo$ has
    exactly one violation. Either there is an $h$, $0\leq h< \ell$,
    such that $\SDo_{r_0+h,\col}=n-i-1$ and $\SDo_{r_0+h,\col-1}=n-i$ or
    $\SDo_{r_0,\col}=n-i-1$ and $\SDo_{r_0-1,\col}=n-i$.
\end{claim}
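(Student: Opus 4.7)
The plan is to compare $\SDo = \phiInv{\eta}{\SIm}$ with $\SDo_1 = \phiInv{s_i\eta}{\SIm}$ by running both through the intermediate tableaux $P = w_\eta^{-1}\SIm$ and $P_1 = w_{s_i\eta}^{-1}\SIm$. First, $(\eta,\SIm)\neq(s_i\eta,\SIm)$ (otherwise both pairs would be simultaneously in or out of the image of $\Map$), so $\eta_i \neq \eta_{i+1}$; and since $(s_i\eta)^+=\standComp$ we must also have $\eta^+=\standComp$, meaning $\eta$ has exactly $\ell$ parts equal to $m$ and the remaining parts equal to $0$. In particular $\{\eta_i,\eta_{i+1}\}=\{0,m\}$. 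A direct check from the definition shows $w_{s_i\eta}=w_\eta s_i$ whenever $\eta_i\neq\eta_{i+1}$, so $P_1(b)=s_i(P(b))$ for every cell $b$. Because the subsequent steps of $\phi^{-1}$ (moving the strip of $\ell$ cells from $\lambda\setminus\gamma$ to $\gamma\setminus\lambda$ and applying $\mathrm{REV}$) do not depend on $\eta$, the tableaux $\SDo$ and $\SDo_1$ coincide except at the two cells where $P$ takes the values $i$ and $i+1$; via $\mathrm{REV}$ this translates to a swap of the two consecutive entries $n-i$ and $n-i+1$ in two specific cells of the shape $\gamma$.

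Next the two cells are located using the hypothesis $\SIm\in\I$. That hypothesis places the entries $\{n-\ell+1,\dots,n\}$ of $\SIm$ precisely on $\lambda\setminus\gamma$, and the formula $w_\eta(j_s)=n-s+1$ (where $j_1<\cdots<j_\ell$ are the positions of $m$ in $\eta$) shows that a cell $b$ satisfies $P(b)\in\{j_1,\dots,j_\ell\}$ if and only if $b\in\lambda\setminus\gamma$. Hence in the case $\eta_{i+1}=m$ the cell with $P$-value $i+1$ lies in $\lambda\setminus\gamma$ and, after the strip-move, lands at some $B=(\row+h,\col)\in\gamma\setminus\lambda$ with $0\leq h<\ell$ carrying $\SDo(B)=n-i$, while the cell with $P$-value $i$ stays in $\lambda\cap\gamma$, providing $B'\in\lambda\cap\gamma$ with $\SDo(B')=n-i+1$; and $\SDo_1$ is obtained from $\SDo$ by exchanging these two entries. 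In the opposite case $\eta_i=m$, $\eta_{i+1}=0$, the roles of $B$ and $B'$ are reversed: $B\in\gamma\setminus\lambda$ carries $\SDo(B)=n-i+1$ and $B'\in\lambda\cap\gamma$ carries $\SDo(B')=n-i$.

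The main technical obstacle is to rule out the opposite case and then to locate $B'$. Because $\gamma\setminus\lambda$ forms a vertical strip of length $\ell$ at the bottom of column $\col$ with $\gamma_{\row+h}=\col$ for each $0\leq h<\ell$, every cell of $\lambda\cap\gamma$ that is orthogonally adjacent to $B=(\row+h,\col)$ is weakly above or weakly to the left of $B$; the only candidates are $(\row+h,\col-1)$ (for any $0\leq h<\ell$) and, only when $h=0$, $(\row-1,\col)$. In the opposite case the arrangement $\SDo(B)=n-i+1>n-i=\SDo(B')$ at these adjacent positions is already compatible with the strict-increase conditions along columns and rows in $\SDo$, so the swap produces a violation in $\SDo_1$ rather than in $\SDo$, contradicting the hypothesis. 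Hence $\eta_i=0$ and $\eta_{i+1}=m$. Finally, in that case, for $\SDo$ to fail standardness $B$ and $B'$ must be orthogonally adjacent, which forces $B'$ to be one of the two candidates above and yields the two alternatives in the claim; uniqueness of the violation follows from the fact that all other neighbours of $B$ and $B'$ carry entries unchanged by the swap and, by the standardness of $\SDo_1$, already compatible with both of the interchanged values.
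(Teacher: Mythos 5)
Your proposal is correct and follows essentially the same route as the paper's own proof: the identity $w_{s_i\eta}=w_{\eta}s_i$, the observation that $\SDo$ and $\SDo_1$ differ only by exchanging two consecutive entries at the two cells whose $w_{\eta}^{-1}$-values are $i$ and $i+1$, and the hypothesis $\SIm\in\I$ to decide which of those two cells lies in $\lambda\setminus\gamma$, which forces $\eta_{i+1}=m$, $\eta_i=0$ and places the unique violation on the boundary of the column strip; you simply make explicit the adjacency geometry of $\gMinusl$ and the consecutive-swap argument that the paper leaves implicit. One remark: your swapped entries are $n-i$ and $n-i+1$, whereas the claim (and the paper's proof) say $n-i-1$ and $n-i$; since $w_0(i)=n-i+1$ and $w_0(i+1)=n-i$, and since the near-image pair $(\mu^2,T)$ of Example~\ref{ex:mapExample} (with $n=11$, $i=4$) has its violation between the entries $7$ and $8$, your values are the correct ones and the stated claim carries an off-by-one, so this discrepancy is a typo in the paper rather than a defect in your argument.
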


\begin{proof}
It is straightforward to calculate that
$w_{s_i\eta}=w_{\eta}s_i$. There is a violation when
$w_0w_{\eta}^{-1}$ is used to invert $\Map$, but not when
$w_0s_iw_{\eta}^{-1}$ is. That is, we only need to swap $n-i$ and
$n-i-1$ in $\SDo$ to remove its problem. Working backwards, there is a
cell $\lambda\cap\gamma$ with label $a$ in $\SIm$ and a cell in
$\lMinusg$ with label $b$ in $\SIm$ such that $w_{\eta}^{-1}(a)=i$ and
$w_{\eta}^{-1}(b)=i+1$. Since $\SIm\in\I$, $b>n-\ell$ and
$w_{\eta}^{-1}(b)\in\{i_1,\ldots,i_{\ell}\}$;
i.e. $\eta_{i+1}=m$. Similarly, $\eta_i=0$.
\end{proof}

\begin{claim}
  \label{claim:muTType}
  For $(\mu,\TIm)$  in the image or near-image, we have 
\begin{enumerate}
\item \label{claim:muPlusPart}$\mu^+=(\underbrace{m,\ldots,m}_\text{$\ell$ times},0,\ldots,0),$ 
\item \label{claim:IPart} the cells in $\TIm$ in $\lMinusg$ are filled with $\{n-\ell+1,\ldots,n\}$ (see Figure~\ref{fig:I}), and   
\item \label{claim:muBPart}$\mu_B=m$ if and only if $B\in\lMinusg$.
\end{enumerate}
\end{claim}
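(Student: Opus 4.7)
The plan is to dispatch the image case by direct inspection of the definition of $\Map$, and then to reduce the near-image case to it using the relation $w_{s_i\mu}=w_\mu s_i$ already extracted in the proof of Claim~\ref{claim:smallProblem}.

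For the image case, fix $\TDo\in\syt(\gamma)$ with $\Map(\zeroComp,\TDo)=(\mu,\TIm)$. Part~(1) is immediate from the construction: exactly $\ell$ of the entries of $\mu$ (those indexed by $i_1,\ldots,i_\ell$) are set equal to $m$ and the remaining $n-\ell$ are $0$, so $\mu^+=\standComp$. For part~(2), the intermediate tableau $P$ of shape $\lambda$ carries the entries $\{i_1,\ldots,i_\ell\}$ inherited from $\rev{\TDo}$ in the cells of $\lMinusg$. Applying $w_\mu$ to $P$ yields $\TIm$, and the defining property of $w_\mu$ as the longest permutation that sorts $\mu$ weakly increasingly forces the $\ell$ positions of $\mu$ where the value is $m$ to be mapped onto the $\ell$ largest indices, giving $w_\mu(\{i_1,\ldots,i_\ell\})=\{n-\ell+1,\ldots,n\}$. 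Hence the entries of $\TIm$ in $\lMinusg$ are exactly $\{n-\ell+1,\ldots,n\}$, i.e.\ $\TIm\in\I$. Part~(3) then falls out from the definition $\mu_B=\mu_{w_\mu^{-1}(\TIm(B))}$: this value equals $m$ iff $w_\mu^{-1}(\TIm(B))\in\{i_1,\ldots,i_\ell\}$, iff $\TIm(B)\in\{n-\ell+1,\ldots,n\}$, iff, by~(2), $B\in\lMinusg$.

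For the near-image case, pick $i$ such that $(s_i\mu,\TIm)$ is in the image. Then (1) holds because $s_i\mu$ and $\mu$ are rearrangements of one another, and (2) holds because $\TIm$ is unchanged between the two pairs. For (3), the proof of Claim~\ref{claim:smallProblem} yields $\mu_i=0$ and $\mu_{i+1}=m$, so $\mu_i\ne\mu_{i+1}$ and $w_{s_i\mu}=w_\mu s_i$; a direct chase then gives $\mu_B=\mu_{w_\mu^{-1}(\TIm(B))}=(s_i\mu)_{s_iw_\mu^{-1}(\TIm(B))}=(s_i\mu)_{w_{s_i\mu}^{-1}(\TIm(B))}=(s_i\mu)_B$, so (3) reduces to the image case. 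The only real bookkeeping obstacle is tracking the three levels of indexing (the composition $\mu$, the sorting permutation $w_\mu$, and the cell-indexed value $\mu_B$); once the defining property of $w_\mu$ is invoked correctly, each part of the claim reduces to a short verification.
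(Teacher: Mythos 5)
Your proposal is correct and follows essentially the same route as the paper: the image case by direct inspection of the construction of $\Map$ and the sorting property of $w_\mu$, and the near-image case reduced to the image case via Claim~\ref{claim:smallProblem} and the identity $w_{s_i\mu}=w_\mu s_i$. The only difference is cosmetic: your uniform index chase $\mu_B=(s_i\mu)_B$ handles all cells at once, where the paper checks only the two cells whose positions are swapped by $s_i$.
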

  \ytableausetup{mathmode, boxsize=2.5em}
  \begin{figure}[ht]
  \begin{ytableau}
    \scriptstyle n-\ell+1\\
    \scriptstyle n-\ell+2\\
    \none[\vdots]\\
    n
  \end{ytableau}
\caption{The cells in $\lMinusg$ for $S\in\I$} 
\label{fig:I}
  \end{figure}

%\sftodo{I need a proof here}
  \begin{proof} First consider $(\mu,\TIm)$ in the
image. Part \eqref{claim:muPlusPart} is a direct consequence
of the definition of $\Map$. As in the definition of $\Map$, denote the entries
in $\rev{\TDo}$ in the cells from $\gMinusl$ by
$i_1<i_2<\cdots<i_{\ell}$, where $i_{\ell}$ will be in the top-most
cell of $\gMinusl$ in $\rev{\TDo}$ and also of $\lMinusg$ in $P$. By the
definition of $w_{\mu}$, $w_{\mu}(i_j)=n-j+1$. When we apply $w_{\mu}$
to $P$, we have a tableau $\TIm$ of the form claimed
by \eqref{claim:IPart}. Finally, suppose the entry in $B$ is $j$. Then
$\mu_B=m$ if and only if the
$w_{\mu}^{-1}(j)\in\{i_1,\ldots,i_{\ell}\}$ if and only if
$n-\ell+1\leq j\leq n$ if and only if $B\in\lMinusg$.

Now suppose $(\mu,\TIm)$ is in the near-image. Since $(s_i\mu,\TIm)$ is in
the image, by the first part of this proof, \eqref{claim:muPlusPart}
and \eqref{claim:IPart} hold. For \eqref{claim:muBPart}, in light of
Claim~\ref{claim:smallProblem}, we only need to consider the cells
containing $j_1$ and $j_2$, where $w_{s_i\mu}^{-1}(j_1)=i$ and
$w_{s_i\mu}^{-1}(j_2)=i+1$. We have $(s_i\mu)_{i+1}=m$ and
$(s_i\mu)_i=0$, because $(s_i\mu,\TIm)$ is in the image. This shows that
$j_1$ is the entry of a cell in $\lambda\cap\gamma$ and $j_2$ is in a
cell in $\lMinusg$. Since $w_{s_i\mu}=w_{\mu}s_i$, we know that
$w_{\mu}^{-1}(j_1)=i+1$ and $w_{\mu}^{-1}(j_2)=i$ and we are done.
\end{proof}

  \omitt{What's more (I think), every tableau which is filled in this manner will appear in the image at least once, with different permutations of $(m,m\ldots,m,0\ldots,0)$.}

\subsection{The multiset of weights}
\label{subsec:multiset}
Throughout this section, $n$ and $k$ are fixed and
$\gamma\gtrdot\lambda$ is an edge in $\Pnk$. We continue to use $(\mu,\TIm)$ for a pair in the image or near-image and $(\eta,\SIm)$ represents an arbitrary element of the basis for $\stanmod(\lambda)$.

\begin{notation}
For a standard Young tableau $U$ of any shape and a weak composition
$\eta$, let $\M{\eta}{U}$ denote the multiset of weights of $(\eta,U)$
and let $\mult{x}{\eta}{U}$ be the multiplicity of $x$ in
$\M{\eta}{U}$. Further, for any partition $\delta$, let
$\multB{x}{\delta}$ be the number of cells $B$ in the diagram of
$\delta$ such that $1-\ct{B}{\delta}/k$ is equal to $x$. In other
words, $\multB{x}{\delta}$ is $\mult{x}{\zeroComp}{T}$ where $T$ is a
tableau of shape $\delta$.  See Claim~\ref{claim:singleCol} for a
reminder on regions and the notation $\R'$.\omitt{Recall that $\R'$ is
the cells in $\lambda$ which have residue $r_A$ and $\R$ the subset of
$\R'$ which lie in region $\reg$, for
$\reg_{A'}\leq \reg\leq \reg_A$.} %end omitt
\end{notation}

\begin{claim}
\label{claim:sameMulti}
  Let $(\eta^1,\SIm_1)$ and $(\eta^2,\SIm_2)$ be pairs where $(\eta^1)^+=(\eta^2)^+$ and $\SIm_1,\SIm_2\in \syt(\lambda)$. Let $\ell_1$ be the number of nonzero parts of $\eta^1$. Suppose the positions of $n, n-1,\ldots,n-\ell_1+1$ are the same in 
 $\SIm_1$ as in $\SIm_2$. Then the multiset of weights of $(\eta^1,\SIm_1)$ is
  the same as the multiset of weights of $(\eta^2,\SIm_2)$.
  \end{claim}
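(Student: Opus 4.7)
My plan is to decompose the weight at each cell $B\in\lambda$ so that the dependence on the composition and the dependence on the tableau separate cleanly. Using the formula from Section~\ref{subsec:weights} together with $c=1/k$, I would rewrite
$$\wti{\eta}{\SIm}{B} \;=\; f_\eta(\SIm(B)) \;+\; 1 \;-\; c\,\ctBox{B},$$
where $f_\eta:\{1,\dots,n\}\to\RR_{\ge 0}$ is defined by $f_\eta(j)=\eta_{w_\eta^{-1}(j)}$. The first key observation is that the tuple $(f_\eta(1),\dots,f_\eta(n))$ is, by the defining property of $w_\eta$, the unique weakly increasing rearrangement of $\eta$, so it depends only on $\eta^+$. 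Consequently, under the hypothesis $(\eta^1)^+=(\eta^2)^+$ we get $f_{\eta^1}=f_{\eta^2}=:f$, and because $\eta^1$ has exactly $\ell_1$ nonzero parts, $f(j)=0$ for $j\le n-\ell_1$ while $f(j)>0$ for $n-\ell_1<j\le n$.

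Next I would partition $\lambda=\mathcal T_r\sqcup\mathcal B_r$ for $r=1,2$, where $\mathcal T_r=\{B\in\lambda:\SIm_r(B)>n-\ell_1\}$ and $\mathcal B_r=\lambda\setminus\mathcal T_r$. The hypothesis that the positions of $n,n-1,\dots,n-\ell_1+1$ coincide in $\SIm_1$ and $\SIm_2$ is precisely the statement that $\mathcal T_1=\mathcal T_2$ and that $\SIm_1(B)=\SIm_2(B)$ for every $B$ in this common top set. Hence on top cells the weight contribution $f(\SIm_r(B))+1-c\,\ctBox{B}$ is independent of $r$ and agrees termwise. On bottom cells the $f$-term vanishes and the contribution reduces to $1-c\,\ctBox{B}$, depending only on $B$; since $\mathcal B_1=\mathcal B_2$, the resulting multisets of bottom contributions also coincide. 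Combining the top and bottom contributions yields $\M{\eta^1}{\SIm_1}=\M{\eta^2}{\SIm_2}$, as desired.

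I do not anticipate a substantive obstacle: the argument is pure bookkeeping once one recognizes that the weight at $B$ depends on $\eta$ only through $\eta^+$ and the entry $\SIm(B)$, and on $\SIm$ only through $\SIm(B)$ together with the geometry of $B$. The only conceptual step is isolating $f_\eta$ and noting that it is determined by $\eta^+$; everything else follows directly from unpacking the hypothesis on the positions of the $\ell_1$ largest entries.
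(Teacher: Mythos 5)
Your proof is correct and follows essentially the same route as the paper's: both isolate the part of the weight that depends only on $\eta^+$ (your $f_\eta$; the paper's observation that $w_\eta$ sends the nonzero positions to $n,n-1,\dots,n-\ell_1+1$) and then match termwise the contributions of the cells holding the top $\ell_1$ entries, while the remaining cells contribute only $1-c\,\ctBox{B}$, giving the same multiset of contents. If anything, your cell-wise bookkeeping via $f_\eta$ is marginally cleaner, since it covers unequal nonzero parts of $\eta^+$ verbatim, whereas the paper's write-up tacitly takes all nonzero parts equal to $m$.
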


\begin{proof}
Suppose $i_1<\cdots<i_{\ell_1}$ and $j_1<\cdots<j_{\ell_1}$ are the
positions in $\eta^1$ and $\eta^2$ respectively of the nonzero
entries. Then $w_{\eta^1}(i_x)=n-x+1$ and $w_{\eta^2}(j_x)=n-x+1$ for
$1\leq x\leq \ell_1$. Therefore
$\wti{\eta^1}{\SIm_1}{i_x}= \wti{\eta^2}{\SIm_2}{j_x}=m+1-\ct{n-x+1}{\SIm_1}c$. The
situation is the same for the positions of the zero enties: since the
set of remaining cells in both $\SIm_1$ and $\SIm_2$ is the same, the multiset of cell contents will be the same.
\end{proof}

We will only be concerned about the multiset of weights in this
sub-section, so by Claim~\ref{claim:sameMulti} we assume
$\mu=\standComp=(m,m\ldots,m,0\ldots,0)$.

\begin{claim}
  \label{claim:etaRegion}
  Suppose $\M{\eta}{\SIm}=\M{\mu}{\TIm}$ for some $\eta$ and $\SIm$. Let $B$ be
  a cell in $\lambda$. If $\eta_B\neq 0$, then $\reg_B$, the
  region of $B$, satisfies $\reg_{A'}\leq \reg_B\leq \reg_A$.
  \end{claim}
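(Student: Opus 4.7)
The plan is to compare the multisets of weights residue-class by residue-class modulo $k$. Since the fractional part of a weight $\wti{\eta}{\SIm}{B}$ depends only on $r_B$, the equality $\M{\eta}{\SIm}=\M{\mu}{\TIm}$ decomposes, for each residue $r$, into the multiset identity
$$\{\eta_B - \reg_B : B \in \lambda,\ r_B = r\} = \{\mu_B - \reg_B : B \in \lambda,\ r_B = r\}.$$
Summing both sides and using $\mu=\standComp$ together with the fact that $\gMinusl$ is a column strip of length $\ell<k$ (so each residue appears at most once in $\gMinusl$), I get that $\sum_{r_B=r}\eta_B$ equals $m$ if $r$ occurs as a residue in $\gMinusl$, and $0$ otherwise.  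Non-negativity of $\eta$ thus forces $\eta_B=0$ whenever $r_B$ is not a residue of $\gMinusl$.

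Now fix a residue $r$ appearing in $\gMinusl$, and let $B_0$ be the unique cell of $\gMinusl$ with $r_{B_0}=r$; since $\gMinusl$ is a length-$\ell$ column strip starting at $A$ with $\ell<k$, its cells occupy only the regions $\reg_A$ and $\reg_A-1$, so $\reg_{B_0}\in\{\reg_A,\reg_A-1\}$.  The residue-$r$ part of the multiset equality is realised by a bijection $\sigma$ on the residue-$r$ cells of $\lambda$; writing $B_1:=\sigma^{-1}(B_0)$, this bijection satisfies $\eta_{B_1}=m+\reg_{B_1}-\reg_{B_0}$ and $\eta_B=\reg_B-\reg_{\sigma(B)}$ for $B\neq B_1$, so $\eta_B\geq 0$ translates to $\reg_B\geq \reg_{\sigma(B)}$ off $B_1$.

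The main step is a cycle analysis of $\sigma$.  A telescoping sum around any $\sigma$-cycle shows that the total $\eta$-mass of a cycle is $m$ if the cycle contains $B_1$ (equivalently $B_0$) and $0$ otherwise; so $\eta\equiv 0$ on every other cycle.  On the cycle through $B_1$ and $B_0$, the regions $\reg$ weakly decrease except at the single jump $B_1\mapsto B_0$, so every cell $B$ on that cycle satisfies $\reg_{B_1}\leq \reg_B\leq \reg_{B_0}$.  Hence any $B$ with $\eta_B>0$ lies on this cycle and obeys $\reg_B\leq \reg_{B_0}\leq \reg_A$.

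For the lower bound I split into two cases.  If $B=B_1$ and $\eta_{B_1}>0$, then $\eta_{B_1}\geq 1$ by integrality, so $\reg_{B_1}\geq \reg_{B_0}-m+1\geq(\reg_A-1)-m+1=\reg_{A'}$.  If $B\neq B_1$ with $\eta_B>0$, then $\reg_B>\reg_{\sigma(B)}\geq \reg_{B_1}$ gives $\reg_B\geq \reg_{B_1}+1$, and combined with $\reg_{B_1}\geq \reg_{B_0}-m\geq \reg_{A'}-1$ this gives $\reg_B\geq \reg_{A'}$.  Combining the two bounds yields $\reg_{A'}\leq \reg_B\leq \reg_A$, as required.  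The principal obstacle is the cycle analysis, which rests on the elementary but essential observation that around a finite cycle, weakly monotone behaviour of $\reg$ (off a single distinguished jump) pins the regions down to the interval $[\reg_{B_1},\reg_{B_0}]$.
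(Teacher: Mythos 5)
Your argument is correct, but it follows a genuinely different route from the paper's proof. (One notational slip: throughout you write $\gMinusl$ where you mean $\lMinusg$ --- the cells of $\lambda$ on which $\mu$ takes the value $m$ form the strip $\lMinusg$ headed by $A$ (Claim~\ref{claim:muTType}), whereas the cells of $\gMinusl$ are not cells of $\lambda$ at all; since every property you invoke (head $A$, regions $\reg_A$ and $\reg_A-1$, one cell per residue) is a property of $\lMinusg$, this does not affect the mathematics.) The paper proves the two inequalities by two separate extremal arguments by contradiction: for the upper bound it takes the largest region carrying a nonzero entry of $\eta$ and compares the multiplicity of the single weight $1-\reg-r/k$ in $\M{\mu}{\TIm}$ and $\M{\eta}{\SIm}$, counting the diagonal $D_B$; for the lower bound it takes the smallest such region, matches the weight of its cell with a cell $B_1$ which is forced into $\lMinusg$, and derives the contradiction $m\leq \reg_{B_1}-\reg_{B_0}<m$ from $m=\reg_A-\reg_{A'}$ and $\reg_A\leq\reg_{B_1}+1$. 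You instead split the multiset identity residue class by residue class (legitimate, since with $c=1/k$ the fractional part of a weight determines the residue), realize each residue-$r$ identity by a bijection $\sigma$ on the residue-$r$ cells, and run a telescoping cycle analysis: $\eta$ vanishes on every cycle not containing the unique strip cell $B_0$ of that residue, regions along the cycle through $B_0$ are pinned between $\reg_{B_1}$ and $\reg_{B_0}$, and both bounds then fall out directly from $\eta_{B_1}=m+\reg_{B_1}-\reg_{B_0}\geq 0$ (respectively $\geq 1$ when $\eta_{B_1}>0$), $\reg_{B_0}\in\{\reg_A,\reg_A-1\}$, and $m=\reg_A-\reg_{A'}$. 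Both proofs use the same inputs (Claims~\ref{claim:singleCol} and~\ref{claim:muTType}, non-negativity and integrality of $\eta$), but yours is direct rather than by contradiction, obtains the two bounds from a single analysis, and yields extra structure as a by-product: the per-residue mass of $\eta$ (equal to $m$ on strip residues and $0$ otherwise, which recovers Claim~\ref{claim:etaSum}) and the confinement of the support of $\eta$ to one $\sigma$-cycle per residue. What the paper's version buys in exchange is locality: no auxiliary bijection has to be chosen, and each bound is a short, self-contained counting argument.
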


\begin{proof}
Recall that for $\alpha\in\Z^n$, $\TIm$ a standard Young tableau of shape
$\lambda\vdash n$, and $B\in\lambda$, $\alpha_B$ is
$\alpha_{w_{\alpha}^{-1}(i)}$, where $i$ is the entry in cell $B$ of
$\TIm$.  Let $\reg$ be the largest region number of a region which
contains a cell $B$ such that $\eta_B> 0$. Fix a cell $B$ with
$\eta_B> 0$ in region $\reg$. Consider the diagonal $D_B$. Now
suppose, for contradiction, that $\reg>\reg_A$. Since $\reg>\reg_A$,
we have $\mu_{B'}=0$ by Claim~\ref{claim:muTType}\eqref{claim:muBPart}
and $\wti{\mu}{\TIm}{B'}=1-\reg-r/k$ for all $B'\in D_B$. A cell with a
higher region number than $\reg$ will have a weight that is strictly
smaller than a cell in $D_B$; likewise, a cell with a lower region
number will have a higher weight.  That means that
$\mult{1-\reg-r/k}{\mu}{\TIm}=|D_B|$. Therefore, there should be $|D_B|$
cells $B'$ in the Young diagram of $\lambda$ such that
$\wti{\eta}{\SIm}{B'}=1-\reg-r/k$. These cells cannot all be in $D_B$,
since there is the cell $B\in D_B$ with
$\wti{\eta}{\SIm}{B}=\eta_B+1-\reg-r/k>1-\reg-r/k$. Let $C$ be a cell in
$\lambda$ with the same residue. If $\reg_C>\reg$, then $\eta_C=0$,
forcing $\wti{\eta}{\SIm}{C}=1-\reg_C-r/k<1-\reg-r/k$. If $\reg_C<\reg$,
then $\eta_C+1-\reg_C-r/k=1-\reg-r/k$ means that $\eta_C<0$. Therefore
$\reg\leq
\reg_A$.

Of the regions which contain a cell $B$ with $\eta_B> 0$, pick the
region with the smallest region number. Let $\reg=\reg_{B_0}$ be the number
of this region and let $B_0$ be a cell in it such that $\eta_{B_0}>
0$. Suppose $\reg_{B_0}<\reg_{A'}$, for contradiction. Since
$\M{\eta}{\SIm}=\M{\mu}{\TIm}$, there is a cell $B_1$ with the same residue
as $B_0$ such that $\mu_{B_1}-\reg_{B_1}=\eta_{B_0}-\reg_{B_0}$.

If $\reg_{B_1}<\reg_{B_0}$, then $\eta_B=\mu_B=0$ for all $B\in
D_{B_1}$. Therefore, each cell in $D_{B_1}$ is responsible for the same
weight in $\M{\eta}{\SIm}$ as it is in $\M{\mu}{\TIm}$, so the cells in
$D_{B_1}$ ``cancel'' themselves out. There must be a cell $B_1$ with
$\reg_{B_1}\geq \reg_{B_0}$ with the same residue as $B_0$ and
$\mu_{B_1}-\reg_{B_1}=\eta_{B_0}-\reg_{B_0}$.

We assume $\reg_{B_1}\geq \reg_{B_0}$ and
$\mu_{B_1}-\reg_{B_1}=\eta_{B_0}-\reg_{B_0}$. Thus
$\mu_{B_1}-\eta_{B_0}=\reg_{B_1}-\reg_{B_0}\geq 0$. We have the following
expression for $\mu_B$:
$$\mu_B=\begin{cases}m&\text{if $B\in
  \lMinusg$}\\0&\text{otherwise,}\end{cases}$$ and may conclude
$B_1\in\lambda\setminus\gamma$ and $\reg_{B_1}\leq\reg_A\leq\reg_{B_1}+1$. Therefore,
$m=\reg_A-\reg_{A'}<\reg_A-\reg_{B_0}\leq\reg_{B_1}+1-\reg_{B_0}$, so that
$\reg_{B_1}-\reg_{B_0}\geq m.$

On the other hand, $\mu_{B_1}-\eta_{B_0}=\reg_{B_1}-\reg_{B_0}$ means that
$\reg_{B_1}-\reg_{B_0}=m-\eta_{B_0}<m.$ Therefore $\reg\geq \reg_{A'}$.

  \end{proof}

\begin{claim} \label{claim:etaSum} Suppose that $\M{\eta}{\SIm}=\M{\mu}{\TIm}.$ Then we have
$$\sum_{B\in \R'}\eta_B=m.$$
  \end{claim}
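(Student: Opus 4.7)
The plan is to compute $\sum_{B\in\R'}\wti{\eta}{\SIm}{B}$ in two ways using the hypothesis $\M{\eta}{\SIm}=\M{\mu}{\TIm}$, then read off the identity $\sum_{B\in\R'}\eta_B=\sum_{B\in\R'}\mu_B$ and evaluate the right-hand side explicitly.

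First I would observe the following refinement of the hypothesis. For any cell $B\in\lambda$, the weight
\[
\wti{\eta}{\SIm}{B}\;=\;\eta_B+1-\reg_B-r_B/k
\]
lies in $\ZZ-r_B/k$. Since $0\leq r_B<k$, weights arising from cells of distinct residues lie in disjoint cosets of $\ZZ$ inside $\QQ$, hence are distinct. Consequently the equality $\M{\eta}{\SIm}=\M{\mu}{\TIm}$ refines residue by residue: for each $r\in\{0,1,\ldots,k-1\}$, the multisets of weights of those cells of $\lambda$ having residue $r$ agree on the two sides.

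Second I would apply this refinement at $r=r_A$, so that the relevant set of cells is $\R'$. Summing and cancelling the common terms $1-\reg_B-r_A/k$ yields
\[
\sum_{B\in\R'}\eta_B\;=\;\sum_{B\in\R'}\mu_B.
\]

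Third I would evaluate the right-hand side. By Claim~\ref{claim:muTType}\eqref{claim:muBPart}, which holds both in the image and the near-image, $\mu_B=m$ if $B\in\lMinusg$ and $\mu_B=0$ otherwise; hence $\sum_{B\in\R'}\mu_B=m\cdot|\lMinusg\cap\R'|$. By Claim~\ref{claim:singleCol}, $\lMinusg$ is a single-column strip of length $\ell<k$ with top cell $A$, so the residues of its cells are $r_A,r_A-1,\ldots,r_A-\ell+1\pmod{k}$ and are pairwise distinct. Thus $\lMinusg\cap\R'=\{A\}$ and $\sum_{B\in\R'}\mu_B=m$, proving the claim. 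The only non-routine step is the residue-refinement observation, but it follows immediately from the denominator structure of the weights; everything else is bookkeeping relying on earlier claims.
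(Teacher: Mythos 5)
Your proposal is correct and follows essentially the same route as the paper: the paper's proof is exactly the computation $\sum_{B\in\R'}\bigl(\eta_B+1-\ctBox{B}/k\bigr)=\sum_{B\in\R'}\bigl(\mu_B+1-\ctBox{B}/k\bigr)$ followed by cancellation, with $\sum_{B\in\R'}\mu_B=m$. You merely spell out two steps the paper leaves implicit, namely that the multiset equality refines residue by residue (since the fractional parts $r_B/k$ separate residues) and that $\lMinusg\cap\R'=\{A\}$, which is a welcome clarification but not a different argument.
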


By Claim~\ref{claim:etaRegion}, Claim~\ref{claim:etaSum} implies $\sum_{B\in \R'}\eta_B=m.$ 
\begin{proof}
  \begin{eqnarray*}
    \sum_{B\in\R'}\left(\eta_B+1-\ct{B}{\lambda}/k\right)&=&\sum_{B\in\R'}\left(\mu_B+1-\ct{B}{\lambda}/k\right)\\
    \sum_{B\in\R'}\eta_B+\sum_{B\in\R'}(1-\ct{B}{\lambda}/k)&=&m+\sum_{B\in\R'}(1-\ct{B}{\lambda}/k)\\
    \sum_{B\in\R'}\eta_B&=&m\\
    \end{eqnarray*}
  \end{proof}

\begin{proposition}
  \label{prop:difftMultisets}
  Let $(\eta,\SIm)$ be such that $\eta\vdash\ell m$ and $S\in
  \syt(\lambda)$. If either $\eta^+\neq\mu$ or the positions of
  $n,n-1,\ldots,n-\ell+1$ are not the same in $\SIm$ as in $\TIm$, then the
  multiset of weights of $(\eta,\SIm)$ is not equal to the multiset of
  weights of $(\mu,\TIm)$
  \end{proposition}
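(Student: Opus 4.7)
I would proceed by contrapositive: assuming $\M{\eta}{\SIm} = \M{\mu}{\TIm}$, I would deduce both $\eta^+ = \mu$ and that the positions of $n,n-1,\ldots,n-\ell+1$ coincide in $\SIm$ and $\TIm$. First I would observe that the multiset equality decomposes by residue. Since the fractional part of the weight $\wti{\alpha}{U}{B} = \alpha_B + 1 - \rho_B - r_B/k$ is $-r_B/k \bmod 1$, weights with different residues are distinct, and for each residue $r$ the multisets $\{\eta_B - \rho_B : r_B = r\}$ and $\{\mu_B - \rho_B : r_B = r\}$ must agree. Summing, $\sum_{r_B = r} \eta_B = m$ if $r$ appears as the residue of a cell in $\lambda\setminus\gamma$ (this is Claim \ref{claim:etaSum} and its residue-by-residue analog), and $0$ otherwise; in the second case the non-negativity of $\eta_B$ forces $\eta_B = 0$ throughout.

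Next I would turn to the $\ell$ residues $r$ appearing in $\lambda\setminus\gamma$. Since $\ell < k$ by Claim \ref{claim:singleCol}(4), each such residue corresponds to a single strip cell $B^*_r$. Matching the residue-$r$ multisets produces a distinguished cell $B_0$ whose weight $\eta_{B_0} + 1 - \ctBox{B_0}/k$ contributes the exceptional value $m + 1 - \ctBox{B^*_r}/k$, while the other cells pair up under a bijection contributing the background values $1 - \ctBox{C}/k$. By Claim \ref{claim:etaRegion} the support of $\eta$ is confined to regions $[\rho_{A'},\rho_A]$, and by Claim \ref{claim:unders} (together with the analogous statements for the remaining strip residues, derived from the abacus description of $\gamma \gtrdot \lambda$) the quantity $|\unders{B^*_r}|$ equals the length of the strip below $B^*_r$, while $|\unders{C}|$ is strictly greater for the other residue-$r$ cells $C$ in the admissible band of regions.

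The main work, and the point at which I expect the argument to be most delicate, is to combine these size bounds with $\eta_B \geq 0$ and the standardness of $\SIm$ to rule out every placement of $\eta$'s positive values other than at the strip cells. The standardness is essential: naive multiset matching at a single residue admits several solutions, but requiring the positive entries of $\eta_B$ to occupy cells whose $\SIm$-values form a top skew shape of $\lambda$ forces their location to be exactly $\lambda\setminus\gamma$. Once this is established, $\eta_{B^*_r} = m$ for each of the $\ell$ strip residues and the pairing of remaining residue-$r$ cells collapses to the identity, giving $\eta_B = 0$ at every other cell.

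Assembling these local conclusions yields $\eta$ with exactly $\ell$ nonzero parts, each equal to $m$, positioned at the cells of $\lambda\setminus\gamma$ in $\SIm$. This is $\eta^+ = \mu$ together with $\SIm \in \I$, and since $\lambda\setminus\gamma$ is a single column and $\SIm$ is standard, the entries $n-\ell+1, \ldots, n$ must be placed in that column in top-to-bottom increasing order, matching their positions in $\TIm$.
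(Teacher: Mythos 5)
Your setup is sound and partly sharper than the paper's: the observation that the weight multiset splits residue by residue (because the fractional part of $\wti{\eta}{\SIm}{B}$ is determined by $r_B/k$) correctly generalizes Claim~\ref{claim:etaSum}, and your use of Claim~\ref{claim:etaRegion} is appropriate. But the proof has a genuine gap exactly where you flag it: the step "combine these size bounds with $\eta_B\geq 0$ and the standardness of $\SIm$ to rule out every placement of $\eta$'s positive values other than at the strip cells" is the entire content of the proposition, and you never carry it out -- you only assert that standardness "forces" the location. As you yourself note, the residue-$r$ multiset matching admits several solutions (a cell $B_0$ with $\rho_{B_0}<\rho_{B^*_r}$ and $\eta_{B_0}=m-(\rho_{B^*_r}-\rho_{B_0})>0$ is perfectly consistent with the residue-$r$ data), so some quantitative argument is needed to eliminate them, and none is given. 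Moreover, your plan depends on "analogous statements" of Claim~\ref{claim:unders} for every strip residue, which are not proved in the paper and which you do not prove either; verifying them would require redoing the abacus analysis for each residue $r_A-1,\dots,r_A-\ell+1$. A further structural problem is that the cells of $\unders{B}$ carry varying residues, so the monotonicity of the $Q$-labels down a column cannot be exploited within a single residue class; a residue-by-residue localization risks counting the same cells in the arguments for different residues, and some global accounting is unavoidable.

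For contrast, the paper avoids all of this by working only with the residue $r_A$ and the total degree. After reducing (via Claim~\ref{claim:sameMulti}) to $\eta=\eta^+$, it shows $\eta_A<m$ (else $\eta=\mu$ by column-monotonicity of $Q$ on $\unders{A}$), and then derives the contradiction
$$m\ell=\sum_i\eta_i\;\geq\;\sum_{B\in\R}\eta_B|\unders{B}|\;\geq\;\eta_A\,\ell+\sum_{B\in\R,\,B\neq A}\eta_B(\ell+1)\;\geq\;m\ell+\sum_{B\in\R,\,B\neq A}\eta_B\;>\;m\ell,$$
using Claim~\ref{claim:etaRegion} to restrict to $\R$, the weak increase of $Q$ down columns to replace $\sum_{B'\in\unders{B}}\eta_{B'}$ by $\eta_B|\unders{B}|$, Claim~\ref{claim:unders} for the sizes, and Claim~\ref{claim:etaSum} for $\sum_{B\in\R}\eta_B=m$; the case $\eta^+=\mu$, $\SIm\notin\I$ is dispatched by the same inequality. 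If you want to salvage your residue-by-residue outline, you would need to either prove the extended versions of Claim~\ref{claim:unders} and then still supply a global counting step tying the residues together, or simply adopt a single-residue budget argument of the paper's type.
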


\begin{proof}

Assume, for contradiction, we have $\eta^+\neq\mu$ but 
$\M{\eta}{\SIm}$ and $\M{\mu}{\TIm}$ are the same. By Claim~\ref{claim:sameMulti}, we may assume
$\eta=\eta^+$. We consider $\eta=\mu$ but $\SIm\not\in\I$ as a second case.

Here we need the bijection given in Section~\ref{sec:genJack} to produce the pair $(P,Q)=(P(\eta,\SIm),Q(\eta,S))$. The pair $(P,Q)$ are described in Section~\ref{sec:stanModBasis}; $P$ also appears in description of the map $\Map$.  
We use $Q=Q(\eta,\SIm)$ to see that $\eta_A<m$. If $\eta_A$ were at least
$m$, then since the labels in $Q$ in $\unders{A}$ are weakly
increasing, we'd have $|\eta|\geq \eta_A|\unders{A}|=\eta_A\ell\geq
m\ell$, so that $\eta_A$ can be at most $m$. But if $\eta_A=m$ and for
all $B\in\unders{A}$, $\eta_B=m$, we'd have $\eta=\mu$. Therefore, we
can assume that $\eta_A<m$ and  $\sum_{B\neq A}\eta_B>0$. We have a contradiction:

  \begin{eqnarray*}
    m\ell=\sum_i\eta_i&\geq&\sum_{B\in\R}\sum_{B'\in\unders{B}}\eta_{B'}\\
    &\geq&\sum_{B\in\R}\eta_B|\unders{B}|\\
    &\geq&\eta_A\ell+\sum_{B\in\R,B\neq A}\eta_B(\ell+1)\text{ by Claim~\ref{claim:unders}}\\
    &\geq&\sum_{B\in\R}\eta_B\ell+\sum_{B\in\R,B\neq A}\eta_B\\
    &=&m\ell+\sum_{B\neq A}\eta_B\text{ by Claim~\ref{claim:etaSum}}\\
    &>&m\ell.
  \end{eqnarray*}

We rely on Claim~\ref{claim:etaRegion} to restrict the sum to $\R$, we consider the weakly increasing labels in $Q(\eta,\SIm)$ again to replace $\sum_{B'\in\unders{B}}\eta_{B'}$ by $\eta_B|\unders{B}|$, and use Claim~\ref{claim:unders} several times. 
  
  We have proved the case $\eta^+\neq\mu$.

  What if $\eta^+=\mu$, but $\SIm\not\in\I$? Then $\eta_A\neq m$ and
  since $\eta^+=\mu$, we have $\eta_A=0$. There must be $B\in\R$ such
  that $\eta_B\neq 0$. But then by Claim~\ref{claim:unders} we
  have $$m\ell=\sum_i\eta_i\geq\eta_B|\unders{B}|\geq m(\ell+1).$$ In
  this case also, we have a contradiction and must have a different
  multiset of weights.

\end{proof}
\subsection{Multiplicity one}
\label{subsec:multOne}

Suppose $(\eta,\SIm)$ and $(\nu,\TIm)$ are such that $\eta,\nu\vdash\ell m$,
$\SIm,\TIm\in \syt(\lambda)$, and either $\eta\neq\nu$ or $\SIm\neq \TIm$.  By
Proposition~\ref{prop:difftMultisets}, we only need to show that even
if $\eta^+=\nu^+$ and $n,n-1,\ldots,n-\ell+1$ have the same positions
in $\SIm$ and $\TIm$, we still have $\wt{\eta}{\SIm}\neq\wt{\nu}{\TIm}$.

\begin{claim}
\label{claim:phiInv}  
Let $\eta$ be a composition of $n$ with $\eta^+=\standComp$ and
$\SIm\in\syt(\lambda)$ an element of
$\I$. Then $\phiInv{\eta}{\SIm}=(\zeroComp,\SDo)$,  where $\SDo$ is a tableau of shape
$\gamma$. $\SDo$ violates the standard Young tableau conditions at most
on the boundary of the column strip $\gMinusl$. More precisely,  $\SDo_{ij}<\SDo_{i+1,j}$ and $\SDo_{ij}<\SDo_{i,j+1}$ except
possibly $\SDo_{i,\col-1}>\SDo_{i,\col}$ for
$i\in\{r_0,\ldots,r_0+\ell-1\}$ and possibly
$\SDo_{r_0-1,\col}>\SDo_{r_0,\col}$. If $\SDo_{ij}<\SDo_{i+1,j}$ and
$\SDo_{ij}<\SDo_{i,j+1}$ all $i,j$, then $(\eta,\SIm)$ is in the image of
$\Map$.
\end{claim}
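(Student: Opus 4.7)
The plan is to trace the inverse construction $\phiInv{\eta}{\SIm}$ step by step. By reversing the three operations defining $\Map$, one obtains $\SDo$ from $\SIm$ as follows: first form $P$ of shape $\lambda$ by $P(B) = w_\eta^{-1}(\SIm(B))$; then relocate the column strip of $P$ occupying $\lMinusg$ into the positions $\gMinusl$, obtaining $\SDo'$ of shape $\gamma$; finally apply the reversal $i \mapsto n - i + 1$ to obtain $\SDo$. I would verify the SYT inequalities on $\SDo$ separately on $\gamma \cap \lambda$, on $\gMinusl$, and then at the boundary.

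For cells of $\gamma \cap \lambda$, I would first compute $w_\eta$ explicitly. Writing the nonzero positions of $\eta$ as $j_1 < \cdots < j_\ell$ and the zero positions as $q_1 < \cdots < q_{n-\ell}$, the formula from Section~\ref{sec:genJack} yields $w_\eta(j_s) = n - s + 1$ and $w_\eta(q_t) = n - \ell - t + 1$. Since $\SIm \in \I$, the values of $\SIm$ at the cells of $\gamma \cap \lambda$ are exactly $\{1, \ldots, n - \ell\}$, and the computation shows that $w_\eta^{-1}$ acts on this set as the strictly order-reversing bijection $v \mapsto q_{n-\ell-v+1}$. Composing with the order-reversing $i \mapsto n - i + 1$ produces an order-preserving map, so $\SDo(B) = n - w_\eta^{-1}(\SIm(B)) + 1$ preserves the order of $\SIm$-values on $\gamma \cap \lambda$. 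Because cells of $\gamma \cap \lambda$ occupy the same positions in $\gamma$ and in $\lambda$, the SYT inequalities of $\SIm$ pass directly to $\SDo$ within $\gamma \cap \lambda$.

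For the cells of $\gMinusl$, the definition of $\I$ places the values $n - \ell + 1, \ldots, n$ from top to bottom in $\lMinusg$, and $w_\eta^{-1}$ sends these to $j_\ell, j_{\ell-1}, \ldots, j_1$ respectively; after relocation and reversal, the cell $(r_0 + h, \col)$ of $\gMinusl$ receives the value $n - j_{\ell - h} + 1$, which is strictly increasing in $h$ since $j_1 < \cdots < j_\ell$. So the SYT inequalities hold strictly inside $\gMinusl$. By Claim~\ref{claim:singleCol} the strip $\gMinusl$ sits in a single column $\col$ of $\gamma$, so the only adjacent pairs of cells straddling $\gMinusl$ and $\gamma \cap \lambda$ are those above the top of the strip and immediately to its left; any hypothetical cell of $\gamma$ at $(r_0 + \ell, \col)$ or at $(r_0 + h, \col + 1)$ would lie in $\gamma \cap \lambda \subseteq \lambda$, and then the Young diagram shape of $\lambda$ would force a cell of $\gMinusl$ to belong to $\lambda$, which is absurd. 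This establishes the claimed list of possibly violating pairs.

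Finally, if $\SDo$ satisfies every SYT inequality, then $\SDo \in \syt(\gamma)$, and applying $\Map$ to $(\zeroComp, \SDo)$ by definition undoes the three reversible steps above, producing $(\eta, \SIm)$; hence $(\eta, \SIm)$ lies in the image of $\Map$. The main obstacle I foresee is the careful bookkeeping of indices in the computation of $w_\eta^{-1}$ and the identification of its restriction to $\{1,\ldots,n-\ell\}$ as order-reversing; the rest is direct from the definitions and Claim~\ref{claim:singleCol}.
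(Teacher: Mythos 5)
Your proof is correct and follows essentially the same route as the paper: compute $w_\eta$ (hence $w_0w_\eta^{-1}$) explicitly, observe it is order-preserving on the entries $\{1,\dots,n-\ell\}$ occupying $\gamma\cap\lambda$ and yields strictly increasing entries down the strip $\gMinusl$, so violations can only occur along the boundary of that column strip. Your write-up is in fact a bit more complete than the paper's, since you also spell out the boundary-adjacency geometry via Claim~\ref{claim:singleCol} and the reversibility argument for the final "image" assertion, which the paper leaves implicit.
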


\begin{proof} 
  To calculate $\phiInv{\eta}{S}$, we apply $w_0w_{\eta}^{-1}$ to $S$,
  then move the entries in $\lMinusg$ to the cells in
  $\gMinusl$. We must show that we can't have a violation within
  the $\gamma\cap\lambda$ cells of $\SDo$ or within the cells
  $\gMinusl$ of $\SDo$.

  Let $i_1<i_2<\ldots<i_{\ell}$ be the coordinates in $\eta$ such that
  $\eta_{i_j}=m$ and $j_1<\ldots<j_{n-\ell}$ be
  $[n]-\{i_1,\ldots,i_{\ell}\}$. Then

  $$w_0w^{-1}_{\eta}(h)=\begin{cases}n-j_{n-h+1}+1&\text{if $h\leq n-\ell$}\\
  n-i_{n-h+1}+1&\text{if $h> n-\ell$.}\end{cases} $$

  Since $S\in\I$, all entries in $\gamma\cap\lambda$ of $S$ are less
  than $n-\ell$. By the expression for $w_0w_{\eta}^{-1}$, $a<b$ in
  $\gamma\cap\lambda$ of $S$ means
  $w_0w_{\eta}^{-1}(a)<w_0w_{\eta}^{-1}(b)$ in $\SDo$ and there is no
  violation in $\gamma\cap\lambda$. Similarly, all entries in
  $\lMinusg$ in $S$ are at least $n-\ell+1$ and there is no
  violation in $\gMinusl$ in $\SDo$.

\end{proof}
  
Note that we have shown that in $\SDo$, where
$(\zeroComp,\SDo)=\phiInv{\eta}{S}$, we have the following situation,
where $i_1<i_2<\ldots<i_{\ell}$ are the coordinates in $\eta$ such
that $\eta_{i_j}=m$ and $S\in\I$.

  \ytableausetup{mathmode, boxsize=3.5em}
  \begin{figure}[ht]
  \begin{ytableau}
    \scriptstyle n-i_{\ell}+1\\
    \scriptstyle n-i_{\ell-1}+1\\
    \none[\vdots]\\
    \scriptstyle n-i_1+1
  \end{ytableau}
  \caption{The cells in $\gMinusl$ for $\SDo=\phiInv{\eta}{\SIm}.$}
  \label{fig:invType}
  \end{figure}

\begin{claim}
  \label{claim:zero} Let $\eta^+=\standComp$, let $\SIm\in\syt(\lambda)$
    be an element of $\I$, and let $(\zeroComp,\SDo)=\phiInv{\eta}{\SIm}$. $\SDo$ need
    not be a an element of $\syt(\gamma)$. Then
    $\wt{\eta}{\SIm}=\wt{\zeroComp}{\SDo}$, where $\zeroComp$ is the composition of length
    $n$ with all 0 parts.
\end{claim}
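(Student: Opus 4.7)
The plan is a direct, component-by-component comparison of the weight vectors $\wt{\eta}{\SIm}$ and $\wt{\zeroComp}{\SDo}$ by tracking each label through the three steps comprising $\phiInv{\eta}{\SIm}$.

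First I would unpack the definitions. Since every part of $\zeroComp$ is zero, $w_\zeroComp$ is the longest permutation $w_0$, so $\wti{\zeroComp}{\SDo}{i} = 1 - c \cdot \ct{n-i+1}{\SDo}$, while $\wti{\eta}{\SIm}{i} = \eta_i + 1 - c \cdot \ct{w_\eta(i)}{\SIm}$. The map $\phiInv{\eta}{\SIm}$ proceeds by (a) relabeling the entries of $\SIm$ via $w_\eta^{-1}$ to produce a tableau $P$ of shape $\lambda$, (b) applying $w_0$ (replacing each label $j$ by $n-j+1$) to produce $Q$ of shape $\lambda$, and (c) moving the $\ell$ labels currently located in $\lMinusg$ into the cells of $\gMinusl$, top cell to top cell. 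Steps (a) and (b) leave the underlying cells fixed, so chasing the definitions gives $\ct{n-i+1}{Q} = \ct{w_\eta(i)}{\SIm}$, and the problem reduces to verifying
$$\eta_i \;=\; c\bigl(\ct{n-i+1}{Q} - \ct{n-i+1}{\SDo}\bigr) \quad \text{for every } i.$$

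Since $\eta^+ = \standComp$, each $\eta_i$ is either $0$ or $m$. If $\eta_i = 0$ then $i \notin \{i_1, \ldots, i_\ell\}$ (the positions of the $m$'s in $\eta$), so the label $n-i+1$ is not moved in step (c); it occupies the same cell in $Q$ and in $\SDo$, and both sides of the displayed identity vanish. If $\eta_i = m$ then $n-i+1$ lies in $\{n-i_\ell+1, \ldots, n-i_1+1\}$ and is shifted from a cell in $\lMinusg$ to the correspondingly ranked cell in $\gMinusl$. Since both strips lie in single columns of length $\ell$ by Claim~\ref{claim:singleCol}(1), a top-to-top shift decreases the content by exactly $\ctBox{A} - \ctBox{A'}$, which equals $mk$ by Claim~\ref{claim:singleCol}(3). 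Multiplying by $c = 1/k$ yields $m = \eta_i$, as required.

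The main subtlety is not any computation but the bookkeeping for step (c): one must extract from the definition of $\Map$ in Section~\ref{subsec:maps} that the move $\gMinusl \to \lMinusg$ in the forward direction (and hence $\lMinusg \to \gMinusl$ in reverse) is a depth-preserving bijection between the two columns, so that the content shift is a uniform $\pm mk$. Once this is in place, Claim~\ref{claim:singleCol} supplies precisely the numerical shift needed, and the case split above finishes the proof in both the image and near-image settings; crucially, the argument never uses that $\SDo$ is a standard Young tableau, only that it is a filling of shape $\gamma$.
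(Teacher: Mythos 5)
Your proposal is correct and follows essentially the same route as the paper's proof: chase each label through $w_0w_\eta^{-1}$ and the strip move, compare the two weight vectors coordinate-by-coordinate, and observe that unmoved cells contribute identically while each cell moved from $\lMinusg$ to $\gMinusl$ has its content shifted uniformly by $mk$ (Claim~\ref{claim:singleCol}), which after multiplying by $c=1/k$ exactly accounts for the $\eta_i=m$ in the weight. Your explicit invocation of Claim~\ref{claim:singleCol}(3) actually makes the content-shift step cleaner than the paper's terse assertion.
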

\begin{proof}
The entry in $B\in\lMinusg$ is $n-\ell+x$ for some $x$ such that
$1\leq x\leq \ell$. It contributes the entry in coordinate
$w_{\eta}^{-1}(n-\ell+x)=i_{\ell-x+1}$ of the weight vector
$\wt{\eta}{\SIm}$. We have
$\wti{i_{\ell-x+1}}{\eta}{\SIm}=m+1-\ctBox{B}/k$. Say that $B$ is moved
to the cell $B'\in\gMinusl$ under $\Map^{-1}$. The entry in $B'$
is $w_0w_{\eta}^{-1}(n-\ell+x)=n-i_{\ell-x+1}+1$ and it contributes
the entry in coordinate $w_0(n-i_{\ell-x+1}+1)=i_{\ell-x+1}$ of the
weight vector $\wt{0}{\SDo}$. Since $\ctBox{B'}=\ctBox{B}-m$, the
entries are the same.

We have a similar situation for $B\in\lambda\cap\gamma$. In $\SIm$, we
have the entry in $B$ is $x$, where $1\leq x \leq n-\ell$. Then
$w_{\eta}^{-1}(x)$ is $j_{n-\ell-x+1}$, so that
$\wti{j_{n-\ell-x+1}}{\eta}{S}$ is $0+1-\ctBox{B}$. In $\SIm$, the
label in $B$ is $w_0w_{\eta}^{-1}(x)=n-j_{n-\ell-x+1}+1$, so that
$\wti{B}{\zeroComp}{\SDo}=\wti{w_0^{-1}(n-j_{n-\ell-x+1}+1)}{\zeroComp}{S\\SDo}=\wti{j_{n-\ell+1}}{\zeroComp}{\SDo}$
is still $0+1-\ctBox{B}$.
  
  \end{proof}
  \begin{claim}
    \label{claim:diffInv}
Let $(\eta,\SIm)$ and $(\mu,\TIm)$ be such that $\eta^+=\mu^+=\standComp$ and
both $\TIm$ and $\SIm$ are elements of $\I$.  Let $(\zeroComp,\SDo)=\phiInv{\eta}{\SIm}$ and
$\TDo=\phiInv{\mu}{\TIm}$.

 If $\eta\neq\mu$ or $\TIm\neq \SIm$, then $\TDo\neq \SDo$. 
\end{claim}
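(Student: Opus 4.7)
The plan is to prove the claim by showing that the construction $(\eta,\SIm)\mapsto\SDo$, restricted to the domain $\{(\eta,\SIm):\eta^+=\standComp,\ \SIm\in\I\}$, is injective. That is, I will give a procedure that reconstructs both $\eta$ and $\SIm$ from the tableau $\SDo$ alone; since the same recipe applied to $\TDo$ must then return $(\mu,\TIm)$, the contrapositive of the claim follows.

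The first step is to extract $\eta$ from $\SDo$. Since $\SIm\in\I$, the cells in $\lMinusg$ of $\SIm$ carry the entries $n-\ell+1,\ldots,n$ (in order down the column). When we form $\SDo=\phiInv{\eta}{\SIm}$, we relabel every entry $h$ of $\SIm$ by $w_0 w_\eta^{-1}(h)$ and then slide the $\lMinusg$ entries over into the $\gMinusl$ column. By the formula for $w_0w_\eta^{-1}$ recorded in the proof of Claim~\ref{claim:phiInv}, the top-to-bottom entries of $\gMinusl$ in $\SDo$ are exactly $n-i_\ell+1,\,n-i_{\ell-1}+1,\,\ldots,\,n-i_1+1$, as displayed in Figure~\ref{fig:invType}, where $i_1<i_2<\cdots<i_\ell$ are the coordinates of $\eta$ equal to $m$. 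Reading this column of $\SDo$ therefore recovers the set $\{i_1,\ldots,i_\ell\}\subseteq[n]$, and the hypothesis $\eta^+=\standComp$ (so $\eta$ has $\ell$ entries equal to $m$ and all others equal to $0$) then determines $\eta$ uniquely.

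With $\eta$ determined, the permutation $w_\eta$ is also determined, and the construction of $\SDo$ from $\SIm$ can now be inverted mechanically: slide the $\ell$ entries of $\SDo$ lying in the column strip $\gMinusl$ back into the cells of $\lMinusg$ (the order is forced, since both strips are single columns of the same length by Claim~\ref{claim:singleCol}), and then relabel every entry $h$ by $w_\eta w_0^{-1}(h)$. This produces $\SIm$. Thus the pair $(\eta,\SIm)$ is recoverable from $\SDo$, and applying the identical procedure to $\TDo$ yields $(\mu,\TIm)$; if $\SDo=\TDo$ then necessarily $\eta=\mu$ and $\SIm=\TIm$, which is the contrapositive of the claim.

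No step is really an obstacle: the reconstruction is entirely combinatorial and uses only properties already established (the geometry of $\gMinusl$ and $\lMinusg$ in Claim~\ref{claim:singleCol}, the explicit formula for $w_0w_\eta^{-1}$ from Claim~\ref{claim:phiInv}, and the fact that $\SIm\in\I$). The only point requiring a moment of care is verifying that the top-to-bottom order of the column $\gMinusl$ in $\SDo$ really does produce the $i_j$'s in the correct order; this is immediate from the fact that $w_0w_\eta^{-1}$ is order-reversing on the image indices $n-\ell+1,\ldots,n$, combined with the order-preserving sliding of entries into $\gMinusl$.
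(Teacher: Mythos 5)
Your argument is correct and is essentially the paper's: the paper also deduces $\SDo\neq\TDo$ from the two facts you use, namely that the $\gMinusl$ column of $\SDo$ records $n-i_\ell+1,\dots,n-i_1+1$ (hence determines $\eta$) and that the relabeling $h\mapsto w_0w_\eta^{-1}(h)$ is a bijection on the remaining entries. Phrasing it as an explicit reconstruction of $(\eta,\SIm)$ from $\SDo$ rather than as the paper's two-case contrapositive is only a cosmetic difference.
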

  \begin{proof}
    If $\eta\neq\mu$, then the entries $\gMinusl$ in $\SDo$ are
    different from those in $\TDo$. See Figure~\ref{fig:invType}. If
    $\eta=\mu$, but $\SIm\neq \TIm$, then the difference must occur in
    $\lambda\cap\gamma$. Say we have a box $B\in\lambda\cap\gamma$
    whose entry in $\SIm$ is $x$ and whose entry in $\TIm$ is $y\neq
    x$. Then its entry in $\SDo$ is $w_0w_{\mu}^{-1}(x)$ and in $\TDo$
    it is $w_0w_{\mu}^{-1}(y)\neq w_0w_{\mu}^{-1}(x)$.
    \end{proof}

  \begin{proposition}
    \label{prop:bigPropOne}
Let $(\eta,\SIm)$ and $(\mu,\TIm)$ be such that
$\eta^+=\mu^+=\standComp$ and both $\TIm$ and $\SIm$ are elements of $\I$. Assume
$(\mu,\TIm)$ is in the image of $\Map$. Let $(\zeroComp,\SDo)=\phiInv{\eta}{\SIm}$
and $(\zeroComp,\TDo)=\phiInv{\mu}{\TIm}$. $\TDo\in \syt(\gamma)$, whereas $\SDo$ may not be.  Then, if $\eta\neq\mu$ or $\TIm\neq \SIm$, then $\wt{\eta}{\SIm}\neq\wt{\mu}{\TIm}$.
\end{proposition}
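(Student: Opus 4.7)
The plan is to argue by contradiction. Suppose $\wt{\eta}{\SIm}=\wt{\mu}{\TIm}$; applying Claim~\ref{claim:zero} to both pairs reduces this to $\wt{\zeroComp}{\SDo}=\wt{\zeroComp}{\TDo}$. Unwinding the formula $\wti{\zeroComp}{U}{i}=1-\ct{n-i+1}{U}c$ with $c=1/k$, this equality means that for every value $v\in\{1,\ldots,n\}$ the cells $\SDo^{-1}(v)$ and $\TDo^{-1}(v)$ lie on the same diagonal of $\gamma$. The goal is therefore to deduce $\SDo=\TDo$, which contradicts Claim~\ref{claim:diffInv} and completes the proof.

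I would prove $\SDo=\TDo$ by induction on $v=1,2,\ldots,n$, showing $\SDo^{-1}(v)=\TDo^{-1}(v)$. Fix $v$ and assume $\SDo^{-1}(j)=\TDo^{-1}(j)$ for all $j<v$. The common filled subshape $X=\{\TDo^{-1}(j):j<v\}$ is then a partition, since $\TDo\in\syt(\gamma)$. Because $\TDo$ is standard, $\TDo^{-1}(v)$ is an addable corner of $X$; and since any two addable corners of a partition lie on distinct diagonals (an easy direct check on row lengths), $\TDo^{-1}(v)$ is the \emph{unique} addable corner of $X$ of content $\ctBox{\TDo^{-1}(v)}$. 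It therefore suffices to show that $B:=\SDo^{-1}(v)$ is itself an addable corner of $X$.

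Suppose, for contradiction, that $B$ is not an addable corner. Then the cell immediately above $B$ or immediately to the left of $B$ lies in $\gamma\setminus X$, producing a violation of the SYT condition at $B$ in $\SDo$. By Claim~\ref{claim:phiInv}, every such violation in $\SDo$ must occur at a cell of $\gMinusl$; hence $B=(r_0+t,\col)$ for some $0\leq t\leq\ell-1$. From the explicit description of $\phiInv{\eta}{\SIm}$, the value at $B$ is $v=n-i_{\ell-t}+1$, and $i_1<\cdots<i_\ell$ implies that all cells of $\gMinusl$ strictly above $B$ have smaller values, hence lie in $X$. The violation at $B$ must therefore be either the row violation $(r_0+t,\col-1)\notin X$, or (only when $t=0$) the column violation $(r_0-1,\col)\notin X$. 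In each case, $\ctBox{B}=\col-r_0-t$, and $\TDo^{-1}(v)$ is obliged to be an addable corner of $X$ on the diagonal of content $\col-r_0-t$. A case analysis using the poset data of Claim~\ref{claim:singleCol} and the under-set bounds of Claim~\ref{claim:unders} rules out the existence of any such addable corner other than $B$ in $\gamma\setminus X$, forcing $\TDo^{-1}(v)=B$; but this contradicts the failure of $B$ to be addable.

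The main obstacle is the concluding case analysis: one has to verify that the restrictions imposed by the cover relation (Claim~\ref{claim:singleCol}) and the size constraints on the under-sets $\unders{B}$ (Claim~\ref{claim:unders}) preclude the existence of an alternative addable corner of $X$ sharing the diagonal of $B$ while the violation-target cell adjacent to $B$ remains unfilled at step $v$. Once this is settled the induction closes, we obtain $\SDo=\TDo$, and the proposition follows from Claim~\ref{claim:diffInv}.
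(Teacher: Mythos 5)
Your opening reduction is the same as the paper's: by Claims~\ref{claim:zero} and~\ref{claim:diffInv} it suffices to show $\wt{\zeroComp}{\SDo}\neq\wt{\zeroComp}{\TDo}$, and weight equality does indeed pin each entry to a fixed diagonal. Your localization of a first positional discrepancy to a cell $B$ of $\gMinusl$ via Claim~\ref{claim:phiInv} is also sound, and when $B$ is \emph{not} the top cell of the strip your argument closes: the strip cell directly above $B$ lies in $X$, so the partition property of $X$ forces every cell of the diagonal of $B$ lying north-west of $B$ into $X$, leaving $B$ as the only candidate addable corner. The genuine gap is the step you defer: when $B$ is the top cell of $\gMinusl$, the claim that Claims~\ref{claim:singleCol} and~\ref{claim:unders} exclude another addable corner of $X$ on the diagonal of $B$ is false. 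Take $n=15$, $k=5$, and the cover $\gamma=(4,3,3,3,2)\gtrdot\lambda=(4,4,3,3,1)$ in $\mathcal{P}(15,5)$ (swap the beads on runners $1$ and $2$; here $\gMinusl=\{(5,2)\}$, $\col=2$, $\ell=1$, $m=1$). Let $X=(3,2,1)$ and $v=7$: then $C=(4,1)$ is an addable corner of $X$ of content $-3$, the same content as $B=(5,2)$, while $B$ is not addable. Both sides are realizable: choose $\TDo\in\syt(\gamma)$ with $1,\dots,6$ on $X$ and $7$ at $(4,1)$, and choose $\SDo=\phiInv{\eta}{\SIm}$ (with $\eta$ having its single nonzero part in position $9$ and $\SIm\in\I$) carrying the same values $1,\dots,6$ on $X$, the value $7$ at $(5,2)$, and the remaining values placed standardly on $\gamma\cap\lambda$; its only violations are at the left and upper border of the strip, exactly as Claim~\ref{claim:phiInv} permits. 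So at the first positional difference the contents still agree, and no case analysis based only on the data available at step $v$ can force $\SDo^{-1}(v)=\TDo^{-1}(v)$.

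This shows the deferred step is not a routine verification but the heart of the proposition: the contradiction has to come from the contents of entries \emph{larger} than $v$ (in the example above the weights first disagree at the entry $8$, not at $7$). That is precisely how the paper argues: from the equality of the entry sets of each diagonal it derives the cyclic-shift structure \eqref{eq:diagperms} on every violated diagonal of $\SDo$ ending in $\gMinusl$, shows that such a shift forces a shift on the adjacent diagonal, so that all displaced entries come from a single column $\cola<\col$, and then gets a contradiction by comparing the number of cells of column $\cola$ below the top violated diagonal with the number of cells of column $\col$ below it. Your induction on the smallest disagreeing entry would have to be replaced by, or augmented with, a global argument of this kind; as written, the proof does not go through.
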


\begin{proof}

As in Claim~\ref{claim:phiInv}, let $\col$ be
the index of the column in $\gamma$ that contains $\gMinusl$,
and let $r_0,\ldots,r_0+\ell-1$ be the indices of the rows that
contain $\gMinusl$.

By Claims~\ref{claim:zero} and \ref{claim:diffInv}, it is enough to
show that $\wt{\zeroComp}{\SDo}\neq\wt{\zeroComp}{\TDo}$. Assume, for contradiction, that
$\SDo\neq \TDo$ and $\wt{0}{\SDo}=\wt{0}{\TDo}$. Two facts are clear but
important to note. The first is that the entries in a diagonal of
standard Young tableaux are strictly increasing reading from the
top-left to the bottom-right. The second is that since
$\wt{0}{\SDo}=\wt{0}{\TDo}$, in each diagonal of $\gamma$, the set of
entries of that diagonal in $\SDo$ is that same as the set of entries
from the same diagonal in $\TDo$, although the order in the diagonal may
be different. We may therefore assume that $\SDo$ is not a standard
Young tableau and the entries in at least one of the diagonals are a
nontrivial permutation of the entries of the same diagonal in $\TDo$. A
{\em violation} on a diagonal occurs when an entry is larger than an
entry to its south-east, both on the same diagonal. A violation on a
diagonal forces either a row or column violation.

 By
Claim~\ref{claim:phiInv}, the only violations in $\SDo$ are on the
boundary of $\gamma\cap\lambda$ and
$\gamma\setminus\lambda$. Diagonals of $\SDo$ which don't have their
rightmost cell in $\gamma\setminus\lambda$ can have no
violations. Consider diagonal $q$ which
ends in $\gamma\setminus\lambda$. For ease of notation, we assume $q\geq 0$-- the arguments are the same for $q\in\Z$. When we denote the entries of
diagonal $q$ in $\SDo$ by $\{\SDo_{1,1+q},\SDo_{2,
  2+q},\ldots,\SDo_{\col-q,\col}\}$, we have $$\SDo_{1,1+q}<\SDo_{2,
  2+q}<\ldots<\SDo_{\col-1-q,\col-1}$$ and no information about how
$\SDo_{\col-1-q,\col-1}$ and $\SDo_{\col-q,\col}$ are ordered. If $\col=1$,
then diagonal $q$ in $\SDo$ has the same entries as diagonal $q$ in
$\TDo$, so assume $\col>1$. Suppose diagonal $q$ has a violation. There
exists an integer $k=k(q)$, $1\leq k <\col-q$ such that

\begin{equation}
\label{eq:diagperms}
\SDo_{i,i+q}=\begin{cases}\TDo_{i,i+q}&\text{if $1\leq i<k$}\\
\TDo_{i+1,i+1+q}&\text{if $k\leq i<\col-q$}\\
\TDo_{k,k+q}&\text{if $i=\col-q$}\\

\end{cases}
\end{equation}

Since $\TDo_{k+1,k+1+q}>\TDo_{k,k+(q+1)}$, we must also have a violation
in diagonal $(q-1)$ with $k(q-1)=k(q)+1$. That is, if $\TDo_{k,k+q}$ is
cycled to the end of diagonal $q$ in $\SDo$, then $\TDo_{k+1,k+1+q}$ must
be cycled to the end of diagonal $q+1$ in $\SDo$. Otherwise, we'd have
$\TDo_{k+1,k+1+q}$ above $\TDo_{k+1,k+q}$, which would be violation in a
column inside $\gamma\cap\lambda$. See
Figure~\ref{fig:TPrimeSPrime}. The cells cycled to
$\gamma\setminus\lambda$ are all from the same column in
$\gamma\cap\lambda$; call it column $\cola$.

Now we obtain our contradiction. We want to show there must be a
violation in column $\cola$ of $\SDo$.  Suppose $q_0$ is the maximum index
of a diagonal in $\SDo$ with a violation. We must be able to cycle all
entries in column $\cola$ below diagonal $q_0$ to
$\gamma\setminus\lambda$. However, since $\cola<\col$, there are more
entries in column $\cola$ below diagonal $q_0$ than there entries in
column $\col$ below diagonal $q_0$, so this is impossible.

\end{proof}

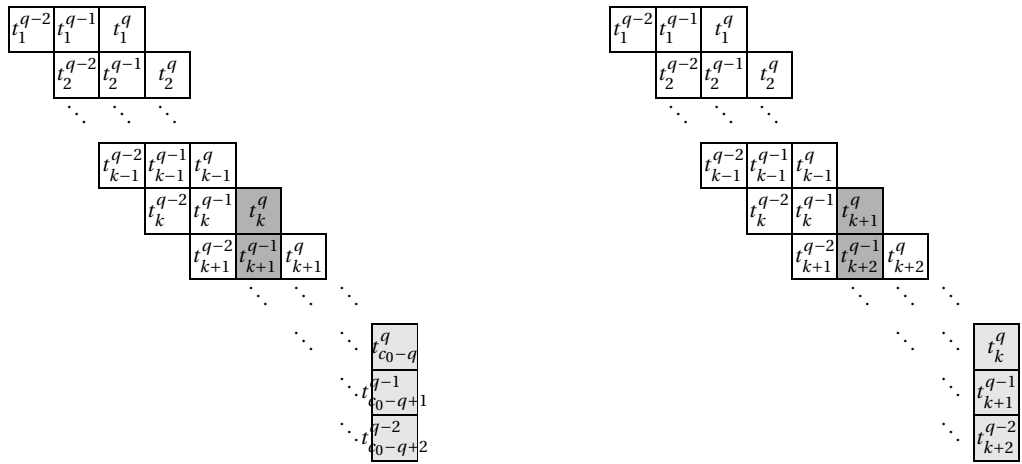
\begin{figure}[h]

 \ytableausetup{boxsize=2.4em}

\begin{tikzpicture}[font=\tiny]

\node at (0,0){\begin{ytableau}
t_1^{q-2}&t_1^{q-1}&t_1^{q}\\
\none&t_2^{q-2}&t_2^{q-1}&t_2^{q}\\
\none&\none[\ddots]&\none[\ddots]&\none[\ddots]\\
\none&\none&t_{k-1}^{q-2}&t_{k-1}^{q-1}&t_{k-1}^{q}\\
\none&\none&\none&t_{k}^{q-2}&t_{k}^{q-1}&*(gray!60)t_{k}^{q}\\
\none&\none&\none&\none&t_{k+1}^{q-2}&*(gray!60)t_{k+1}^{q-1}&t_{k+1}^{q}\\
\none&\none&\none&\none&\none&\none[\ddots]&\none[\ddots]&\none[\ddots]\\
\none&\none&\none&\none&\none&\none&\none[\ddots]&\none[\ddots]&*(gray!20)t^q_{\col-q}\\
\none&\none&\none&\none&\none&\none&\none&\none[\ddots]&*(gray!20)t^{q-1}_{\col-q+1}\\
\none&\none&\none&\none&\none&\none&\none&\none[\ddots]&*(gray!20)t^{q-2}_{\col-q+2}\\
\end{ytableau}};

\node at (8,0){\begin{ytableau}
t_1^{q-2}&t_1^{q-1}&t_1^{q}\\
\none&t_2^{q-2}&t_2^{q-1}&t_2^{q}\\
\none&\none[\ddots]&\none[\ddots]&\none[\ddots]\\
\none&\none&t_{k-1}^{q-2}&t_{k-1}^{q-1}&t_{k-1}^{q}\\
\none&\none&\none&t_{k}^{q-2}&t_{k}^{q-1}&*(gray!60)t_{k+1}^{q}\\
\none&\none&\none&\none&t_{k+1}^{q-2}&*(gray!60)t_{k+2}^{q-1}&t_{k+2}^{q}\\
\none&\none&\none&\none&\none&\none[\ddots]&\none[\ddots]&\none[\ddots]\\
\none&\none&\none&\none&\none&\none&\none[\ddots]&\none[\ddots]&*(gray!20)t^q_{k}\\
\none&\none&\none&\none&\none&\none&\none&\none[\ddots]&*(gray!20)t^{q-1}_{k+1}\\
\none&\none&\none&\none&\none&\none&\none&\none[\ddots]&*(gray!20)t^{q-2}_{k+2}\\
\end{ytableau}};

\end{tikzpicture}
\caption{We denote entry $\TDo_{i,i+q}$ by $t^q_i$. The tableau $\TDo$ is on the left and $\SDo$ is on the right. }
\label{fig:TPrimeSPrime}
\end{figure}

\begin{example}
\label{ex:ForFinalProofOfMultOne}
Let $n=27$, $k=5$, $\gamma=(4,4,4,4,3,2,2,2,2)$, and $\lambda=(4,3,3,3,3,3,3,3,2)$. There are $3$ diagonals ending in $\gamma\setminus\lambda$, which is in column $\col=4$. For any $\cola<\col$ and any diagonal $q$ ending in $\gamma\setminus\lambda$, there are more cells in and below diagonal $q$ in $\cola$ than in and below $q$ in $\col$. See Figure~\ref{fig:exForFinalProofOfMultOne}.

\end{example}

 \ytableausetup{boxsize=1.5em}
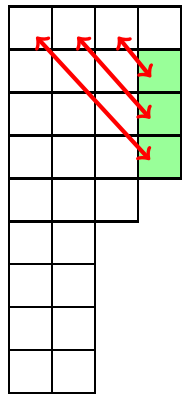
\begin{figure}[h]

\begin{tikzpicture}

\node at (0,0) {
\begin{ytableau} 
 \quad&\quad&\quad&\quad\\
 \quad&\quad&\quad&*(green!40)\quad\\
 \quad&\quad&\quad&*(green!40)\quad\\
 \quad&\quad&\quad&*(green!40)\quad\\
 \quad&\quad&\quad\\
\quad&\quad\\
\quad&\quad\\
\quad&\quad\\
\quad&\quad\\
\end{ytableau}};

\node (A)[red] at (.85,1.5){}; 
\node (B)[red] at (.2,2.3) {};
\draw[ultra thick,red,<->] (A)--(B); 

\node (A1)[red] at (.85,.95){}; 
\node (B1)[red] at (-.35,2.3) {};
\draw[ultra thick,red,<->] (A1)--(B1); 

\node (A2)[red] at (.85,.4){}; 
\node (B2)[red] at (-.9,2.3) {};
\draw[ultra thick,red,<->] (A2)--(B2);

\end{tikzpicture}
\caption{Any non-green cell on an arrow will have more cells beneath it than the green cell on the same arrow. See Example~\ref{ex:ForFinalProofOfMultOne}}
\label{fig:exForFinalProofOfMultOne}
\end{figure}

\begin{proposition}\label{prop:sinuOK}
Let $(\eta,\SIm)$ and $(\mu,\TIm)$ be such that $\eta^+=\mu^+=\standComp$ and
both $\TIm$ and $\SIm$ are elements of $\I$. Assume $(\mu,\TIm)$ is in the near-image. \omitt{NOT in the
image of $\Map$, but for some $i$, $(s_i\mu,T)$ is in the image of
$\Map$.} \omitt{Let $\SDo=\phiInv{\eta}{S}$ and $\TDo=\phiInv{\mu}{T}$.} Then, if $\eta\neq\mu$ or $\TIm\neq \SIm$, then $\wt{\eta}{\SIm}\neq\wt{\mu}{\TIm}$.
\end{proposition}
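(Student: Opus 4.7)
The plan is to reduce to Proposition \ref{prop:bigPropOne} by passing through the standard correction of $\TDo$ provided by Claim \ref{claim:smallProblem}. First, I will invoke Claim \ref{claim:zero} to translate the desired inequality into $\wt{\zeroComp}{\SDo} \neq \wt{\zeroComp}{\TDo}$ with $\SDo = \phiInv{\eta}{\SIm}$ and $\TDo = \phiInv{\mu}{\TIm}$, and use Claim \ref{claim:diffInv} to ensure $\SDo \neq \TDo$. Claim \ref{claim:smallProblem} will then supply the unique index $i$ for which $(s_i\mu, \TIm)$ lies in the image of $\Map$; setting $\TDo_1 = \phiInv{s_i\mu}{\TIm}$, the tableau $\TDo_1$ is a standard Young tableau obtained from $\TDo$ by interchanging the two consecutive labels $n-i$ and $n-i-1$ at the orthogonally adjacent cells of $\TDo$'s unique violation.

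I will then split into two cases. If $(\eta, \SIm) = (s_i\mu, \TIm)$ (which is permissible under the hypothesis since $s_i\mu \neq \mu$), then $\SDo = \TDo_1$, and the vectors $\wt{\zeroComp}{\TDo_1}$ and $\wt{\zeroComp}{\TDo}$ differ exactly in positions $i+1$ and $i+2$ (the positions indexed by the labels $n-i$ and $n-i-1$); since the two violating cells have contents differing by exactly $1$, these entries genuinely differ, and so $\wt{\eta}{\SIm} \neq \wt{\mu}{\TIm}$.

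If instead $(\eta, \SIm) \neq (s_i\mu, \TIm)$, Proposition \ref{prop:bigPropOne} applied to the image pair $(s_i\mu, \TIm)$ yields $\wt{\zeroComp}{\SDo} \neq \wt{\zeroComp}{\TDo_1}$. Assuming for contradiction that $\wt{\zeroComp}{\SDo} = \wt{\zeroComp}{\TDo}$, the multiset of labels on every diagonal of $\gamma$ must coincide in $\SDo$ and $\TDo$, and hence differs from that in $\TDo_1$ by transposing $n-i$ and $n-i-1$ between the two affected diagonals. The key observation is that $n-i$ and $n-i-1$ are consecutive integers absent from the remaining labels on each affected diagonal, so their ranks in any such multiset agree; the rank argument of Proposition \ref{prop:bigPropOne} will then pin down the positions of $n-i$ and $n-i-1$ in $\SDo$ to exactly those they occupy in $\TDo$. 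Combined with the agreement on the other diagonals, this forces $\SDo = \TDo$, contradicting Claim \ref{claim:diffInv}.

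The hard part will be the subcase in which $\SDo$ itself is almost-standard with a violation at some index $i' \neq i$. Here two pairs of diagonals are simultaneously altered, and I will need to apply the consecutive-label rank identity to both swaps in a compatible way, using the adjacent-cell geometry of Claim \ref{claim:smallProblem} to rule out inconsistent configurations of the two violations and again conclude $\SDo = \TDo$, contradicting Claim \ref{claim:diffInv}.
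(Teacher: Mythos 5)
Your reduction via Claims~\ref{claim:zero} and \ref{claim:diffInv} matches the paper's strategy, and your first case $(\eta,\SIm)=(s_i\mu,\TIm)$ is fine: the two cells of the unique violation of $\TDo$ lie on adjacent diagonals, so swapping $n-i$ and $n-i-1$ genuinely changes the weight vector. The gap is in your second case. From $\wt{\zeroComp}{\SDo}=\wt{\zeroComp}{\TDo}$ you only get that each label occupies the same diagonal in $\SDo$ as in $\TDo$; this does \emph{not} force $\SDo=\TDo$, because the labels within a diagonal may still be rearranged. Ruling out such rearrangements is precisely the content of the proof of Proposition~\ref{prop:bigPropOne}: given that violations of $\SDo$ are confined by Claim~\ref{claim:phiInv} to the border of $\gMinusl$, a nontrivial within-diagonal permutation must be the cycle \eqref{eq:diagperms}, and propagating it forces a violation in column $\cola<\col$ of $\SDo$, which is impossible. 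There is no separate ``rank argument'' in that proof to which you can appeal, and your invocation of Proposition~\ref{prop:bigPropOne} for the pair $(s_i\mu,\TIm)$ buys nothing: it only yields $\wt{\zeroComp}{\SDo}\neq\wt{\zeroComp}{\TDo_1}$, which is automatic once $\wt{\zeroComp}{\SDo}=\wt{\zeroComp}{\TDo}$ is assumed, since $\TDo$ and $\TDo_1$ place $n-i$ and $n-i-1$ on different diagonals. So nothing in your argument excludes the possibility that $\SDo\neq\TDo$ yet the weights agree, and the step ``combined with the agreement on the other diagonals, this forces $\SDo=\TDo$'' is a non sequitur.

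Your framing of the ``hard subcase'' also misrepresents what $\SDo$ can look like: by Claim~\ref{claim:phiInv}, $\SDo=\phiInv{\eta}{\SIm}$ may have up to $\ell$ row violations along the whole boundary of the column strip $\gMinusl$, plus possibly one column violation above it -- it is not restricted to a single adjacent transposition of consecutive labels as the near-image tableau of Claim~\ref{claim:smallProblem} is, so the ``two compatible swaps'' picture does not cover the general situation. What the paper actually does, and what your proposal is missing, is to run the argument of Proposition~\ref{prop:bigPropOne} directly against $\TDo$ after one key observation: although $\TDo$ is not standard, its unique violation joins two adjacent cells lying on neighboring diagonals, so the entries along each single diagonal of $\TDo$ are still strictly increasing; hence \eqref{eq:diagperms} and the propagation to an impossible violation in column $\cola$ of $\SDo$ go through verbatim. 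That observation plus the unchanged cycling argument is the proof; your route never supplies it.
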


\begin{proof}
The argument is the same as in the proof of Proposition~\ref{prop:bigPropOne}. 

Although there is a violation in $\TDo$, the entries in each diagonal
are still increasing from north-east to south-west and the diagonals
of $\TDo$, as sets of entries, are the same as the diagonals of
$\SDo$. Since violations in $\SDo$ can only be on the boder to $\gMinusl$,
formula \eqref{eq:diagperms} still holds and we obtain again a
violation in column $\cola$ of $\SDo$, which is not possible by
Claim~\ref{claim:phiInv}.
\end{proof}

\section{Equivariance of $\Map$}
\label{sec:looseEnds}
%\sftodo{What would be a good title?}
\omitt{\subsection{TODO??}
\begin{itemize}
%\item Length of a tableau.
\item Row reading tableau\sftodo{Do we use row-reading tabs?}
%  \item Check whether $\inv T$ is the same as $\inv w$ or $\inv w^{-1}$. 
 \end{itemize}}
\subsection{Introduction}
We begin by recalling the bases $\{v_T\}$ (Jucys-Murphy, special case of the bases in Section~\ref{seminormal}) of the Specht Module and $\{\jack{\mu}{T}\}$ (Section~\ref{sec:genJack}) of the standard modules that we use.  See \cite{Gri2} for more information.

%\sftodo{Put the JM basis $v_T$ in the section ``Young's semi-normal form?}

  $$\iop_i v_T=\begin{cases}
  v_{s_iT}&\text{if $\ell(s_iT)>\ell(T)$}\\
  \frac{\left(\CT T^{-1}(i)-\CT T^{-1}(i+1)\right)^2-1}{\left(\CT T^{-1}(i)-\CT T^{-1}(i+1)\right)^2}v_{s_iT}&\text{if $\ell(s_iT)<\ell(T)$}\\
0&\text{if $s_iT$ is not a $\syt$.}
  \end{cases}$$

%\sftodo{Put what follows in the subsection where the map is defined}

The map $\Map$, where
\omitt{$\Map:\stanmod(\gamma)\to\stanmod(\lambda)$}$\Map:S^{\gamma}\to\stanmod(\lambda)$, $\Map:\jacko{T}\mapsto
\mapconst{\TDo}\jack{\mu'}{\TIm}$, and $\mapconst{\TDo}$ is a constant
depending on $\TDo$, was defined in Section~\ref{subsec:maps}. 

The point of the current section is to show that $\Mapa{\sigma_i\jacko{\TDo}}=\sigma_i\Mapa{\jacko{\TDo}}$ and that the constants $\{\mapconst{\TDo}\}$ for $T$ a SYT are well-defined.

To avoid all the cases involved in the expression for $\sigma_i$ in \eqref{eq:sigmajack} as much as possible, define $\sigconst{\mu}{T}{i}$ by 
$$\sigma_i\jack{\mu}{T}=\sigconst{\mu}{T}{i}\jack{\tilde{\mu}}{\tilde{T}}.$$ 

$$\sigconst{\mu}{T}{i}=\begin{cases}
1&\text{if $\mu_i>\mu_{i+1}$}\\
\frac{\delta^2-c^2}{\delta^2}&\text{if $\mu_i<\mu_{i+1}$}\\
1&\text{if $\mu_i=\mu_{i+1}$, $\ell(s_{j-1}T)>\ell(T)$, and}\\
&\text{$s_{j-1}T$ is a standard Young tableau}\\
1-\left(\frac{1}{\ct{T}{j}-\ct{T}{j-1}}\right)^2&\text{if $\mu_i=\mu_{i+1}$, $\ell(s_{j-1}T)\ell(T)$, and}\\
&\text{$s_{j-1}T$ is a standard Young tableau}\\
0&\text{$s_{j-1}$ is not a standard Young tableau}
\end{cases}$$

If $s_{j-1}T$ is a standard Young tableau and $\ell(s_{j-1}T)>\ell(T)$, then
$$\Mapa{\sigma_i\jacko{T}}=\Mapa{\jacko{s_{j-1}T}}=\mapconst{s_{j-1}T}\jack{\eta}{(s_{j-1}T)'}$$
and $$\sigma_i\Mapa{\jacko{T}}=\mapconst{T}\sigma_i\jack{\mu}{\TIm}=\sigconst{\mu}{\TIm}{i}\mapconst{T}\jack{\tilde{\mu}}{\widetilde{\TIm}}.$$ Here $j=w_{\zeroComp}(i)=n-i+1$.

Then if $\ell(s_{j-1}T)>\ell(T)$, $j=w_\zeroComp(i)$, and $s_{j-1}T$ is a standard Young tableau, we have

  $$b_{s_{j-1}T}=\frac{\sigconst{\mu}{\TIm}{i}}{\sigconst{\zeroComp}{T}{i}}\mapconst{T}=\sigconst{\mu}{\TIm}{i}\mapconst{T},$$ since $\ell(s_{j-1}T)>\ell(T)$ and where $\mu$ is defined by $\Mapa{\jacko{T}}=\mapconst{T}\jack{\mu}{\TIm}$.

We claim
\begin{enumerate}
\item\label{sameimage} $\tilde{\mu}=\eta$ and $(s_{j-1}T)'=\widetilde{\TIm}$,
\item\label{defined} if $w T_0$ is a standard Young tableau and $s_{i_1}\cdots s_{i_k}$ is a reduced word for $w$, then for all $j$, $1\leq j\leq k$
$$s_{i_j}\cdots s_{i_k}T_0$$ is a standard Young tableau,
\item\label{braid} $b_{s_{j-1}s_{j-2}s_{j-1}T}=b_{s_{j-2}s_{j-1}s_{j-2}T}$, and 
  \item\label{commute} $b_{s_is_jT}=b_{s_js_iT}$ where $|i-j|>1$.
\end{enumerate}

We discuss \eqref{braid} and \eqref{commute} only. 

\subsection{Discussion of \eqref{braid}}
\begin{claim}
\label{cl:simu}
  Suppose $\Mapa{\jacko{T}}=\mapconst{T}\jack{\mu}{\TIm}$ and
  $\Mapa{\jacko{s_{k-1}T}}=\mapconst{s_{k-1}T}\jack{\mu'}{(s_{k-1}T)'}$,
  where $s_{k-1}T$ is a standard Young tableau and
  $k=w_{\zeroComp}(h)$. We have $\mu'=s_h\mu$
\end{claim}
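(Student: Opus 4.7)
\textbf{Proof proposal for Claim \ref{cl:simu}.} The plan is to simply unwind the combinatorial definition of the map $\Map$ given in Section \ref{subsec:maps}, tracking carefully what happens when we apply $s_{k-1}$ to $T$. Recall that $w_{\zeroComp}(h) = n-h+1$ is the longest element $w_0 \in \Sym_n$, so the hypothesis $k = w_{\zeroComp}(h)$ is the identity $h = n-k+1$, and the claim reduces to $\mu' = s_{n-k+1}\mu$.

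First I would rewrite the defining condition $\mu_p = m$ in a form that does not pass through the sorting of the $i_j$'s. By construction, $\mu_p = m$ precisely when $p \in \{i_1, \ldots, i_\ell\}$, that is, precisely when $p$ is the entry of $\rev{T}$ appearing in some cell of $\gMinusl$. Equivalently, since $\rev{T}(b) = n - T(b) + 1$ for every cell $b$, we have
$$\mu_p = m \iff (\rev{T})^{-1}(p) \in \gMinusl \iff T^{-1}(n-p+1) \in \gMinusl,$$
and $\mu_p = 0$ otherwise. The analogous statement holds for $\mu'$ with $T$ replaced by $s_{k-1}T$.

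Next I would compare the two conditions. The tableau $s_{k-1}T$ is obtained from $T$ by swapping the cells containing $k-1$ and $k$, so $(s_{k-1}T)^{-1}(j) = T^{-1}(j)$ for every $j \notin \{k-1,k\}$, while $(s_{k-1}T)^{-1}(k-1) = T^{-1}(k)$ and $(s_{k-1}T)^{-1}(k) = T^{-1}(k-1)$. Combining this with the equivalence $\mu'_p = m \iff (s_{k-1}T)^{-1}(n-p+1) \in \gMinusl$ yields $\mu'_p = \mu_p$ for $n-p+1 \notin \{k-1,k\}$ (i.e.\ $p \notin \{n-k+1, n-k+2\}$), together with
$$\mu'_{n-k+1} = \mu_{n-k+2} \qquad \text{and} \qquad \mu'_{n-k+2} = \mu_{n-k+1}.$$
This is precisely the statement $\mu' = s_{n-k+1}\mu = s_h\mu$, completing the argument.

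The verification is almost entirely bookkeeping once one pins down $w_{\zeroComp} = w_0$; the only mild obstacle is keeping track of the three different ``reversals'' in play (the passage from $T$ to $\rev{T}$, the inversion of tableaux viewed as bijections from boxes to $\{1,\ldots,n\}$, and the reindexing of $\mu$), but all three interact compatibly so that an application of $s_{k-1}$ on the $T$-side corresponds exactly to an application of $s_{n-k+1}$ on the $\mu$-side.
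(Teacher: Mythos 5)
Your proposal is correct and takes essentially the same route as the paper: both arguments just unwind the definition of $\Map$, noting that $\mu_p=m$ exactly when $T^{-1}(n-p+1)\in\gMinusl$ and that replacing $T$ by $s_{k-1}T$ only exchanges the cells containing $k-1$ and $k$, hence exchanges the entries of $\mu$ in positions $h=n-k+1$ and $h+1$. The paper phrases this as a brief case analysis on whether $k$ or $k-1$ lies in $\gMinusl$, whereas you give the uniform bookkeeping identity, but the content is the same.
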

\begin{proof} If neither $k$ nor $k-1$ are entries cells in $\gMinusl$ in $T$, then $\mu_h=\mu_{h+1}=0$ and $\mu'_h=\mu'_{h+1}=0$. Since $s_{k-1}T$ is a standard Young tableau, not both $k$ and $k-1$ are entries in $\gMinusl$. If $k-1$ is in $\gMinusl$ and $k$ is not, then $\mu_{h+1}=m$ and $\mu_h=0$, and $\mu'_{h+1}=0$ and $\mu_h=m$. The situation is similar for $k$ in $\gMinusl$ and $k-1$ not.  
\end{proof}

We need to show
\begin{equation}
  \label{eq:braid}
  b_{s_{j-1}s_{j-2}s_{j-1}T}=b_{s_{j-2}s_{j-1}s_{j-2}T}.
  \end{equation}

Both the left and the right hand side of \eqref{eq:braid} are the product of $\sigconst{\cdot}{\cdot}{\cdot}$'s times $\mapconst{T}$. Equation \eqref{eq:braid} is equivalent to
\begin{multline}
    \overbrace{\sigconst{s_{i+1}s_i\mu}{(s_{j-2}s_{j-1}T)'}{i}}^{A}
    \overbrace{\sigconst{s_i\mu}{(s_{j-1}T)'}{i+1}}^{B}
    \overbrace{\sigconst{\mu}{\TIm}{i}}^{C}\\
    \quad\quad=\overbrace{\sigconst{s_{i}s_{i+1}i\mu}{(s_{j-1}s_{j-2}T)'}{i+1}}^{D}\overbrace{\sigconst{s_{i+1}\mu}{(s_{j-2}T)'}{i}}^{E}\overbrace{\sigconst{\mu}{\TIm}{i+1}}^{F}
\end{multline}

We claim that $A=F$, $B=E$, and $C=D$. We show $A=F$, which is
\eqref{eq:AF} below. The other cases are similar.

\begin{equation}
  \label{eq:AF}
  \sigconst{s_{i+1}s_i\mu}{(s_{j-2}s_{j-1}T)'}{i}=\sigconst{\mu}{\TIm}{i+1}.
  \end{equation}

The assumptions here are
\begin{enumerate}
\item $\ell(s_{j-2}s_{j-1}s_{j-2}T)=\ell(T)+3$. %This means $\sigconst{\zeroComp}{T}{i}=
\item $j,j-1,j-2$ are all in different rows and columns in $T$. Otherwise, either $s_{j-1}T$ or $s_{j-2}T$ is not a standard Young tableau. Them means at most one of $\{j,j-1,j-2\}$ is in $\gMinusl$.
\item Of the three $\{j,j-1,j-2\}$, $j$ is the lowest row, $j-2$ in the highest, and $j-1$ between. Otherwises, we'd have $\ell(s_{j-1}T)<\ell(T)$ or $\ell(s_{j-2}T)<\ell(T)$.
 \item
   $s_{i+1}s_i\mu=(\mu_1,\ldots,\mu_{i-1}\mu_{i+1}\mu_{i+2}\mu_i\mu_{i+3},\dots,\mu_n)$,
   so that $(s_{i+1}s_i\mu)_i=\mu_{i+1}$ and
   $(s_{i+1}s_i\mu)_{i+1}=\mu_{i+2}$
  \end{enumerate}
By the definition of $\sigconst{\cdot}{\cdot}{\cdot}$, to prove
\eqref{eq:AF}, it is enough to show that
\begin{equation}
  \label{eq:ctone}\ct{\TIm}{w_\mu(i+1)}=\ct{(s_{j-2}s_{j-1}T)'}{w_{s_{i+1}s_i\mu}(i)}
  \end{equation}
and
\begin{equation}
  \label{eq:cttwo}\ct{\TIm}{w_\mu(i+2)}=\ct{(s_{j-2}s_{j-1}T)'}{w_{s_{i+1}s_i\mu}(i+1)}
  \end{equation}
and that we are in the same case when determining which expression to use for $\sigconst{\mu}{\TIm}{i+1}$ and $\sigconst{s_{i+1}s_i\mu,}{(s_{j-2}s_{j-1}T)'}{i}.$

Table~\ref{braidpositiontable} tracks the entries in three cells in $\gamma$ and $\lambda$. Each row corresponds to a cell. The table shows that \eqref{eq:ctone} and \eqref{eq:cttwo} hold.

\begin{center}
  \begin{tabular}{C|C|C||C|C|C}
    \text{Entry in}&\text{Entry in}&\text{Entry in}&\text{Entry in}&\text{Entry in}&\text{Entry in}\\
    T&\rev{T}&\TIm&s_{j-2}s_{j-1}T&\rev{(s_{j-2}s_{j-1}T)}&(s_{j-2}s_{j-1}T)'\\
    \hline
    j&i&w_\mu(i)&j-2&i+2&w_{s_{i+1}s_i\mu}(i+2)\\
    j-1&i+1&w_\mu(i+1)&j&i&w_{s_{i+1}s_i\mu}(i)\\
       j-2&i+2&w_\mu(i+2)&j-1&i+1&w_{s_{i+1}s_i\mu}(i+1)\\ 
  \end{tabular}
  \captionof{table}{Cell entries under $\Map$}\label{braidpositiontable}
  \end{center}

Since $(s_{i+1}s_i\mu)_i=\mu_{i+1}$ and
$(s_{i+1}s_i\mu)_{i+1}=\mu_{i+2}$, I only need to show that if
$\mu_{i+1}=\mu_{i+1}$, then $$\ell(s_{j_1-1}\TIm)>\ell(\TIm)\text{ iff
}\ell(s_{j_2-1}(s_{j-2}s_{j-1}T)')>\ell((s_{j-2}s_{j-1}T)'),$$ where
$j_1=w_{mu}(i+1)$ and $j_2=w_{s_{j-2}s_{j-1}\mu}(i)$. But then the
positions of $j_1$ in $\TIm$ and $j_2$ in $(s_{j-2}s_{j-1}T)'$ are the
same, and since $\mu_{i+1}=\mu_{i+2}$, so are the positions of $j_1-1$
and $j_2-1$. Thus we are in the same case when determining which expression to use for $\sigconst{\mu}{\TIm}{i+1}$ and $\sigconst{s_{i+1}s_i\mu,}{(s_{j-2}s_{j-1}T)'}{i}.$
\omitt{
\subsection{Discussion of \eqref{defined}}
Define the inversion set of a tableau $T$ to be
$$\inv T=\{(i,j)\mid 1\leq i<j\leq n, j\text{ is in a higher row than }i\}.$$ Note that a tableau $T$ is equal to $w T_0$ for a unique $w\in\Sym_n$ and that $\inv T$ agrees with $\inv w$.

\begin{claim}\label{cl:invset}
  If $w=s_{i_1}\ldots s_{i_k}$ and $T=w T_0$, then
  $$\inv T=\{(s_{i_1}s_{i_2}\ldots s_{i_{j-1}}(i_j),s_{i_1}s_{i_2}\ldots s_{i_{j-1}}(i_j+1))\mid 1\leq i\leq k\}.$$
\end{claim}

  No proof of claim. Find some standard ref. Be a little careful about left vs. right inversions.

  Now suppose $T=w T_0$ and $w=s_{i_1}w'$. I want to use induction on $\ell(w)=\ell(T)$, so it is enough to show $w' T_0$ is a standard Young tableau.

  $w' T_0=s_{i_1}T$, which is not a standard Young tableau if and only
  if $i_1$ and $i_1+1$ are in the same row or column. But
  $(i_1,i_1+1)\in\inv T$ by Claim~\ref{cl:invset}, which means they
  cannot be.
} %end omitt
\omitt{
  \subsection{Discussion of \eqref{sameimage}}

Reminder: if $s_{j-}T$ is a standard Young tableau and
$\ell(s_{j-1}T)>\ell(T)$, then
$$\Mapa{\sigma_i\jacko{T}}=\Mapa{\jacko{s_{j-1}T}}=\mapconst{s_{j-1}T}\jack{\eta}{(s_{j-1}T)'}$$
and $$\sigma_i\Mapa{\jacko{T}}=\mapconst{T}\sigma_i\jack{\mu}{\TIm}=\sigconst{\mu}{\TIm}{i}\mapconst{T}\jack{\tilde{\mu}}{\widetilde{\TIm}}.$$ Here $j=w_{\zeroComp}(i)=n-i+1$.

By Claim~\ref{cl:simu}, $\eta=s_i\mu$. If $\mu_i\neq\mu_{i+1}$, then $\tilde{\mu}=s_i\mu=\eta$. If $\mu_i=\mu_{i+1}$, then $s_i\mu=\mu=\tilde{\mu}$.

\begin{center}
\begin{tabular}{C|C|C|C||C|C|C|C:C}
\text{Entry in}&\text{Entry in}&\text{Entry in}&\text{Entry
  in}&\text{Entry in}&\text{Entry in}&\text{Entry in}&\text{Entry
  in}&\text{Entry
  in}\\ T&s_{j-1}T&(s_{j-1}T)'&\rev{(s_{j-1}T)}&T&\rev{T}&\TIm&\widetilde{\TIm}&\widetilde{\TIm}\\ 
&&&&&&&\text{(if
  $\mu_i\neq\mu_{i+1}$)}&\text{(if $\mu_i=\mu_{i+1}$)}\\ \hline
j&j-1&i+1&w_\eta(i+1)&j&i&w_\mu(i)&w_\mu(i)&w_\mu(i+1)\\
j-1&j&i&w_\eta(i)&j-1&i+1&w_\mu(i+1)&w_\mu(i+1)&w_\mu(i)\\

\end{tabular}
\captionof{table}{Cell entries of $\TDo$ under $\Map$}
\end{center}
For the last column, let $j_1=w_\mu(i)$ so that $\widetilde{\TIm}=s_{j_1-1}\TIm$. If $\mu_i=\mu_{i+1}$, then $w_\mu(i+1)=j_1-1$
}%end omitt
  \subsection{Discussion of \eqref{commute}}
  Suppose $\mid j_1-j_2\mid>1$. I want to show that $\mapconst{s_{j_1-1}s_{j_2-1}T}=\mapconst{s_{j_2-1}s_{j_1-1}T}$.

  Let $i_1=w_0(j_1)$ and $i_2=w_0(j_2)$. Let $\mu^1$ be defined by $\Mapa{\jacko{s_{j_1-1}T}}=\mapconst{s_{j_1-1}T}\jack{\mu^1}{(s_{j_1-1}T)'}$, $\mu^2$ by $\Mapa{\jacko{s_{j_2-1}T}}=\mapconst{s_{j_2-1}T}\jack{\mu^2}{(s_{j_2-1}T)'}$, and $\mu$ by $\Mapa{\jacko{T}}=\mapconst{s_T}\jack{\mu}{\TIm}$. Then

  \begin{eqnarray*}
    \mapconst{s_{j_1-1}s_{j_2-1}T}&=&\sigconst{\mu^2}{(s_{j_2-1}T)'}{i_1}\mapconst{s_{j_2-1}T}\\
    &=&\sigconst{\mu^2}{(s_{j_2-1}T)'}{i_1}\sigconst{\mu}{\TIm}{i_2}\mapconst{T}
  \end{eqnarray*}

  and

  \begin{eqnarray*}
    \mapconst{s_{j_2-1}s_{j_1-1}T}&=&\sigconst{\mu^1}{(s_{j_1-1}T)'}{i_2}\mapconst{s_{j_1-1}T}\\
    &=&\sigconst{\mu^1}{(s_{j_1-1}T)'}{i_2}\sigconst{\mu}{\TIm}{i_1}\mapconst{T}
  \end{eqnarray*}

  We need to show
  $$\overbrace{\sigconst{\mu^2}{(s_{j_2-1}T)'}{i_1}}^A\overbrace{\sigconst{\mu}{\TIm}{i_2}}^B=\overbrace{\sigconst{\mu^1}{(s_{j_1-1}T)'}{i_2}}^C\overbrace{\sigconst{\mu}{\TIm}{i_1}}^D.$$

  We claim $A=D$ and $B=C$, and will show $A=D$:
  \begin{equation}
    \sigconst{\mu^2}{(s_{j_2-1}T)'}{i_1}=\sigconst{\mu}{\TIm}{i_1}
  \end{equation}
\begin{center}
  \begin{tabular}{C|C|C||C|C|C}
    \text{Entry in}&\text{Entry in}&\text{Entry in}&\text{Entry in}&\text{Entry in}&\text{Entry in}\\
    \TDo&\rev{\TDo}&\TIm&s_{j_2-1}T&\rev{(s_{j_2-1}T)}&(s_{j_2-1}T)'\\
    \hline
    j_1-1&i_1+1&w_\mu(i_1+1)&j_1-1&i_1+1&w_{\mu^2}(i_1+1)\\
    j_1&i_1&w_\mu(i_1)&j_1&i_1&w_{\mu^2}(i_1)\\
    j_2-1&i_2+1&w_\mu(i_2+1)&j_2&i_2&w_{\mu^2}(i_2)\\
    j_2&i_2&w_\mu(i_2)&j_2-1&i_2+1&w_{\mu^2}(i_2+1)\\
        \end{tabular}

\omitt{ % with extra column
  \begin{tabular}{C|C|C||C|C|C|C}
    \text{Entry in}&\text{Entry in}&\text{Entry in}&\text{Entry in}&\text{Entry in}&\text{Entry in}&\text{Entry in}\\
    T&\rev{T}&\TIm&T&s_{j_2-1}T&\rev{(s_{j_2-1}T)}&(s_{j_2-1}T)'\\
    \hline
    j_1-1&i_1+1&w_\mu(i_1+1)&j_1-1&j_1-1&i_1+1&w_{\mu^2}(i_1+1)\\
    j_1&i_1&w_\mu(i_1)&j_1&j_1&i_1&w_{\mu^2}(i_1)\\
    j_2-1&i_2+1&w_\mu(i_2+1)&j_2-1&j_2&i_2&w_{\mu^2}(i_2)\\
    j_2&i_2&w_\mu(i_2)&j_2&j_2-1&i_2+1&w_{\mu^2}(i_2+1)\\
        \end{tabular}
 } %end omitt       
\captionof{table}{Cell entries of $\TDo$ under $\Map$}
\end{center}

  By Claim~\ref{cl:simu}, $w_{\mu^2}=w_{s_{i_2}\mu}$. This means that
  if $\mu_{i_2}=\mu_{i_2+1}$, then $w_{\mu^2}=w_{\mu}$ and if
    $\mu_{i_2}\neq\mu_{i_2+1}$, then
      \begin{equation}\label{eq:mmu2}w_{\mu^2}(k)=\begin{cases}w_{\mu}(k)&\text{if $k\neq i_2,i_2+1$}\\
      w_{\mu}(i_2+1)&\text{if $k=i_2$}\\
      w_{\mu}(i_2)&\text{if $k= i_2+1$}\\
      \end{cases}\end{equation}

  Thus,
  \begin{equation}
    \label{eq:sj2-1T}
    (s_{j_2-1}T)'=\begin{cases}\TIm&\text{ if $\mu_{i_2}\neq\mu_{i_2+1}$}\\
  s_{h-1}\TIm&\text{ if $\mu_{i_2}=\mu_{i_2+1}$ and $h=w_\mu(i_2)$}\\
    \end{cases}
    \end{equation}

  As is Section~\ref{braid}, we need to show
  \begin{enumerate}
  \item \label{comKcase}Show we are in the same $\sigconst{\cdot}{\cdot}{\cdot}$ case.
    \item\label{comKval} Show that we have the same $\sigconst{\cdot}{\cdot}{\cdot}$ value. 
  \end{enumerate}
  \eqref{comKcase}: Since $\mu^2=s_{i_2}\mu$ and $\mid j_2-j_1\mid >1$, we have
\begin{equation}\label{eq:mu2}
  \mu^2_{i_1}=\mu_{i_1}\text{ and }\mu^2_{i_1+1}=\mu_{i_1+1}.
\end{equation}
If $\mu_{i_1}\neq\mu_{i_1+1}$, then it is immediate that we are in the same $\sigconst{\cdot}{\cdot}{\cdot}$-case. If $\mu_{i_1}=\mu_{i_1+1}$, then we must show that
\begin{equation}\label{eq:comKcase}\ell(s_{k-1}\TIm)>\ell(\TIm)\Leftrightarrow \ell(s_{k-1}(s_{j_2-1})')>\ell((s_{j_2-1}T)'),\end{equation} where $k=w_{\mu^2}(i_1)=w_\mu(i_1)$. Since $(s_{j_2-1}T)'$ and $\TIm$ differ at most in the cells containing $w_{\mu}(i_2)$ and $w_\mu(i_2+1)$ and only when $\mu_{i_2}=\mu_{i_2+1}$, and $\{w_\mu(i_1),w_\mu(i_1+1)\}$ is disjoint from $\{w_\mu(i_2),w_\mu(i_2+1)\}$, \eqref{eq:comKcase} is true.

 \eqref{comKval}: Since $\mu^2=s_{i_2}\mu$ and $\mid i_1-i_2\mid>1$, we have $\mu_{i_1}^2=\mu_{i_1}$, $\mu_{i_1+1}^2=\mu_{i_1+1}$, $w_\mu(i_1)=w_{\mu^2}(i_1)$, and $w_\mu(i_1+1)=w_{\mu^2}(i_1+1)$. What's more, $\TIm=(s_{j_2-1}T)'$ or $s_{w_\mu(i_2)-1}\TIm=(s_{j_2-1}T)'$. All together, we have $\ct{\TIm}{w_\mu(i_1)}=\ct{(s_{j_2-1}T)'}{w_\mu^2(i_1)}$ and $\ct{\TIm}{w_\mu(i_1+1)}=\ct{(s_{j_2-1}T)'}{w_\mu^2(i_1+1)}$, which shows that no matter what case we are in, $\sigconst{\mu^2}{(s_{j_2-1}T)'}{i_1}=\sigconst{\mu}{\TIm}{i_1}$

\bibliographystyle{amsalpha}

\end{document}